\newtheorem{theorem}{Theorem}[section]
\newtheorem{proposition}[theorem]{Proposition}
\newtheorem{lemma}[theorem]{Lemma}
\newtheorem{corollary}[theorem]{Corollary}
\theoremstyle{definition}
\newtheorem{definition}[theorem]{Definition}
\newtheorem{remark}[theorem]{Remark}
\numberwithin{equation}{section}
\newtheorem{assumption}[theorem]{Assumption}
\title{Metastability of the three-state Potts model with general interactions}
\author[
         {}\hspace{0.5pt}\protect\hyperlink{hyp:email1}{1},\protect\hyperlink{hyp:affil1}{a}
        ]
        {\protect\hypertarget{hyp:author1}{Gianmarco Bet}}
\author[
         {}\hspace{0.5pt}\protect\hyperlink{hyp:email2}{2},\protect\hyperlink{hyp:affil3}{b}
        ]
        {\protect\hypertarget{hyp:author2}{Anna Gallo}}
\author[
         {}\hspace{0.5pt}\protect\hyperlink{hyp:email3}{3},\protect\hyperlink{hyp:affil2}{c},\protect\hyperlink{hyp:corresponding}{$\dagger$}
        ]
        {\protect\hypertarget{hyp:author3}{Seonwoo Kim}}
\affil[ ]{
          \small\parbox{365pt}{
             \parbox{5pt}{\textsuperscript{\protect\hypertarget{hyp:affil2}{a}}}Università degli Studi di Firenze (Italy),
            \enspace
             \parbox{5pt}{\textsuperscript{\protect\hypertarget{hyp:affil1}{b}}} IMT School for Advanced Studies Lucca (Italy),
             \enspace
             \parbox{5pt}{\textsuperscript{\protect\hypertarget{hyp:affil3}{c}}} Seoul National University (Korea)
            }
          }
\affil[ ]{
          \small\parbox{365pt}{
             \parbox{5pt}{\textsuperscript{\protect\hypertarget{hyp:email1}{1}}}\texttt{\footnotesize\href{mailto:gianmarco.bet@unifi.it}{gianmarco.bet@unifi.it}},
             \parbox{5pt}{\textsuperscript{\protect\hypertarget{hyp:email2}{2}}}\texttt{\footnotesize\href{mailto:anna.gallo@imtlucca.it}{anna.gallo@imtlucca.it}},
             \parbox{5pt}{\textsuperscript{\protect\hypertarget{hyp:email3}{3}}}\texttt{\footnotesize\href{mailto:ksw6leta@snu.ac.kr}{ksw6leta@snu.ac.kr}}
            }
          }
\affil[ ]{
          \small\parbox{365pt}{
             \parbox{5pt}{\textsuperscript{\protect\hypertarget{hyp:corresponding}{$\dagger$}}}Corresponding author
            }
          }
\date{\today}
\begin{document}
\maketitle
\begin{abstract}
We consider the Potts model on a two-dimensional periodic rectangular lattice with general coupling constants $J_{ij}>0$, where $i,j\in\{1,2,3\}$ are the possible spin values (or colors). The resulting energy landscape is thus significantly more complex than in the original Ising or Potts models. The system evolves according to a Glauber-type spin-flipping dynamics. We focus on a region of the parameter space where there are two symmetric metastable states and a stable state, and the height of a direct path between the metastable states is equal to the height of a direct path between any metastable state and the stable state. We study the metastable transition time in probability and in expectation, the mixing time of the dynamics and the spectral gap of the system  when the inverse temperature $\beta$ tends to infinity. Then, we identify all the critical configurations that are visited with high probability during the metastable transition.

\medskip\noindent
\emph{Acknowledgment:} The authors would like to express their deepest gratitude to Prof. Francesca Romana Nardi, who encouraged the collaboration that led to this manuscript, but sadly passed away shortly after. This work is dedicated to her memory.

SK was supported by NRF-2019-Fostering Core Leaders of the Future Basic Science Program/Global Ph.D. Fellowship Program and the National Research Foundation of Korea (NRF) grant funded by the Korean government (MSIT) (No. 2017R1A5A1015626, 2022R1F1A106366811 and 2022R1A5A6000840). SK would like to thank University of Florence for the warm hospitality during his two-week stay to finalize this work.
\end{abstract}
\tableofcontents

% SEC: Introduction
\section{Introduction}\label{intro}

Metastability is a phenomenon that is observed when a thermodynamical system is close to a first-order phase transition. When a physical system lies close to its phase coexistence line, it may remain trapped for a long (random) time in a local minimum of the energy, the so-called \textit{metastable state}, before hitting a stable state in a shorter timescale. The phenomenon of metastability is observed in numerous physical systems, such as supercooled liquids or supersaturated gases, but also plays a role in diverse fields such as biology, computer science, chemistry, etc. Motivated by this, in the last decades a lot of effort has been directed towards building a rigorous mathematical understanding of metastability. In this context, a model typically consists of a space of configurations $\mathcal X$, an energy function $H:\mathcal X\to\mathbb R$ and a suitable Markov dynamics driven by the energy difference between configurations. From a mathematical perspective, the metastable behavior of such a system is summarized by the first hitting time of the collection $\mathcal X^s$ of stable configurations, starting from any metastable configuration $\mathbf m\in\mathcal X^m$, as well as the set of \textit{critical configurations} visited during the transition $\mathbf m\to\mathcal X^s$ with probability close to one. A full characterization of the metastable behavior of a model also involves the determination of the so-called \textit{tube of typical paths} visited during the metastable transition.

The classical Potts model with $q\in\mathbb N$ spins on a lattice is defined as follows. We fix a finite graph $\Lambda=(V,E)$ and consider a spin configuration space $\mathcal{X}:=\{1,2,\dots,q\}^V$. To each configuration $\sigma\in\mathcal{X}$ is assigned the energy $H(\sigma)$, given by 
\begin{align}\label{e_origham}
H(\sigma):=-J\sum_{\{v,w\}\in E} \mathbbm{1}_{\{\sigma(v)=\sigma(w)\}}-h\sum_{v\in V}\mathbbm{1}_{\{\sigma(v)=1\}},
\end{align}
where $J>0$ is the \textit{coupling} or \textit{interaction constant} of the system and $h\in\mathbb R$ is the \textit{external magnetic field}. The model is said to be \textit{ferromagnetic} since those configurations in which neighboring spins have the same value are energetically favored. When $q=2$, this is the well-known Ising model.

In this paper, we study the metastable behavior of a \textit{generalized} three-state Potts model (i.e., $q=3$) with zero external field on a finite two-dimensional discrete torus, in the sense that rather than dealing with a fixed coupling constant $J$ as in \eqref{e_origham}, we assume that the coupling constants differ among all possible pairs of spins:
\begin{align}\label{e_genham}
H(\sigma)=-\sum_{i\in S}J_{ii}\sum_{\{v,w\}\in E} \mathbbm{1}_{\{\sigma(v)=\sigma(w)=i\}}+\sum_{i,j\in S,\,i< j} J_{ij} \sum_{\{v,w\}\in E} \mathbbm{1}_{\{\{\sigma(v),\sigma(w)\}=\{i,j\}\}}.
\end{align}
Here, $S:=\{1,2,\dots,q\}$ and the generalized coupling constants $J_{ii}$ and $J_{ij}$ are assumed to be positive. Because of this, the model is ferromagnetic
and thus it is natural to expect for every monochromatic configuration (all spins equal) to be energetically stable, at least in a local sense. For simplicity, we assumed that the external field $h$ is zero.

If $J_{ii}=J>0$ for all $i\in S$, and $J_{ij}$, $i<j$ are also identical, then we recover the original Potts energy \eqref{e_origham} (translated by a fixed real number) with $h=0$. Thus, this model is an extension of the classical Potts model.
\begin{figure}%[!ht]
\centering
\begin{tikzpicture}[scale=0.8,transform shape]
\draw[white] (1.7,0) circle (1cm);
\draw[white] (-1.7,0) circle (1cm); %needed for alignment

\draw (0,-2.7) node[below] {(a) Ising model, $h>0$};
\draw[thick] (1,0) circle (1cm);
\fill[black!40!white] (1,0) circle (1cm);
\draw[thick] (-1,0) circle (1cm);
\fill[black!20!white] (-1,0) circle (1cm);

\fill (1,0) circle (1.2pt);
\draw (1,0) node[below] {\small{$\mathbf s$}};
\fill (-1,0) circle (1.2pt);
\draw (-1,0) node[below] {\small{$\mathbf m$}};

\draw[thick] (0,0) circle (2.5pt);
\fill[red!20!white] (0,0) circle (2.5pt);
\end{tikzpicture}\ \ \ \ \ \ 
\begin{tikzpicture}[scale=0.8,transform shape]
\fill[red!20!white] (-1.55,2.34) rectangle (1.55,2.59);
\draw[pattern=vertical lines, pattern color=black] (-1.55,2.34) rectangle (1.55,2.59);
\fill[red!20!white,rotate around={-60:(-3.19,2.47)}] (-1.55,2.34) rectangle (1.55,2.59);
\draw[pattern=north east lines, pattern color=black, rotate around={-60:(-3.19,2.47)}] (-1.55,2.34) rectangle (1.55,2.59);
\fill[red!20!white,rotate around={60:(3.19,2.47)}] (-1.55,2.34) rectangle (1.55,2.59);
\draw[pattern=north west lines, pattern color=black, rotate around={60:(3.19,2.47)}] (-1.55,2.34) rectangle (1.55,2.59);

\draw (0,-2.7) node[below] {(b) Potts model, $h=0$};
\draw[thick] (0,-1.5) circle (1cm);
\fill[black!40!white] (0,-1.5) circle (1cm);
\draw[thick] (1.7,1.6) circle (1cm);
\fill[black!40!white] (1.7,1.6) circle (1cm);
\draw[thick] (-1.7,1.6) circle (1cm);
\fill[black!40!white] (-1.7,1.6) circle (1cm);
    
\fill  (0,-1.5) circle (1.2pt);
\draw (0,-1.5) node[below] {\small{$\mathbf{s_3}$}};
\fill (1.7,1.6) circle (1.2pt);
\draw (1.7,1.6) node[below] {\small{$\mathbf{s_2}$}};
\fill (-1.7,1.6) circle (1.2pt);
\draw (-1.7,1.6) node[below] {\small{$\mathbf{s_1}$}};

\end{tikzpicture}\ \ \ \ \ \ 
\begin{tikzpicture}[scale=0.8,transform shape]
\fill[blue!20!white] (-1.55,2.34) rectangle (1.55,2.59);
\draw[pattern=vertical lines, pattern color=black] (-1.55,2.34) rectangle (1.55,2.59);
\fill[blue!20!white,rotate around={-60:(-3.19,2.47)}] (-1.55,2.34) rectangle (1.55,2.59);
\draw[pattern=north east lines, pattern color=black, rotate around={-60:(-3.19,2.47)}] (-1.55,2.34) rectangle (1.55,2.59);
\fill[blue!20!white,rotate around={60:(3.19,2.47)}] (-1.55,2.34) rectangle (1.55,2.59);
\draw[pattern=north west lines, pattern color=black, rotate around={60:(3.19,2.47)}] (-1.55,2.34) rectangle (1.55,2.59);

\draw (0,-2.7) node[below] {(c) Potts model, $h<0$};
\draw[thick] (0,-1.5) circle (1cm);
\fill[black!40!white] (0,-1.5) circle (1cm);
\draw[thick] (1.7,1.6) circle (1cm);
\fill[black!40!white] (1.7,1.6) circle (1cm);
\draw[thick] (-1.7,1.6) circle (1cm);
\fill[black!40!white] (-1.7,1.6) circle (1cm);
\draw[thick] (0,0.5) circle (1cm);
\fill[black!20!white] (0,0.5) circle (1cm);
    
\fill  (0,-1.5) circle (1.2pt);
\draw (0,-1.5) node[below] {\small{$\mathbf{s_3}$}};
\fill (1.7,1.6) circle (1.2pt);
\draw (1.7,1.6) node[below] {\small{$\mathbf{s_2}$}};
\fill (-1.7,1.6) circle (1.2pt);
\draw (-1.7,1.6) node[below] {\small{$\mathbf{s_1}$}};
\fill (0,0.5) circle (1.2pt);
\draw (0,0.5) node[below] {\small{$\mathbf m$}};

\draw[thick] (0.85,1.04) circle (2.5pt);
\fill[red!20!white] (0.85,1.04) circle (2.5pt);
\draw[thick] (-0.85,1.04) circle (2.5pt);
\fill[red!20!white] (-0.85,1.04) circle (2.5pt);
\draw[thick] (0,-0.5) circle (2.5pt);
\fill[red!20!white] (0,-0.5) circle (2.5pt);
\end{tikzpicture}\vspace{3mm}

\begin{tikzpicture}[scale=0.8,transform shape]
\draw[thick,dotted] (-1.55,2.59) -- (1.55,2.59) (-1.55,2.34) -- (1.55,2.34);
\draw[thick,dotted,rotate around={-60:(-3.19,2.47)}] (-1.55,2.59) -- (1.55,2.59) (-1.55,2.34) -- (1.55,2.34);
\draw[thick,dotted,rotate around={60:(3.19,2.47)}] (-1.55,2.59) -- (1.55,2.59) (-1.55,2.34) -- (1.55,2.34);

\draw (0,-2.7) node[below] {(d) Potts model, $h>0$};
\draw[thick] (0,-1.5) circle (1cm);
\fill[black!20!white] (0,-1.5) circle (1cm);
\draw[thick] (1.7,1.6) circle (1cm);
\fill[black!20!white] (1.7,1.6) circle (1cm);
\draw[thick] (-1.7,1.6) circle (1cm);
\fill[black!20!white] (-1.7,1.6) circle (1cm);
\draw[thick] (0,0.5) circle (1cm);
\fill[black!40!white] (0,0.5) circle (1cm);
    
\fill  (0,-1.5) circle (1.2pt);
\draw (0,-1.5) node[below] {\small{$\mathbf{m_3}$}};
\fill (1.7,1.6) circle (1.2pt);
\draw (1.7,1.6) node[below] {\small{$\mathbf{m_2}$}};
\fill (-1.7,1.6) circle (1.2pt);
\draw (-1.7,1.6) node[below] {\small{$\mathbf{m_1}$}};
\fill (0,0.5) circle (1.2pt);
\draw (0,0.5) node[below] {\small{$\mathbf s$}};

\draw[thick] (0.85,1.04) circle (2.5pt);
\fill[red!20!white] (0.85,1.04) circle (2.5pt);
\draw[thick] (-0.85,1.04) circle (2.5pt);
\fill[red!20!white] (-0.85,1.04) circle (2.5pt);
\draw[thick] (0,-0.5) circle (2.5pt);
\fill[red!20!white] (0,-0.5) circle (2.5pt);
\end{tikzpicture}\ \ \ \ \ \ 
\begin{tikzpicture}[scale=0.8,transform shape]
\fill[red!20!white] (-1.55,1.34) rectangle (1.55,1.59);
\draw[pattern=vertical lines, pattern color=black] (-1.55,1.34) rectangle (1.55,1.59);

\draw (0,-2.7) node[below] {\textbf{(e) Symmetric generalization}};
\draw[thick] (1.7,0.6) circle (1cm);
\fill[black!20!white] (1.7,0.6) circle (1cm);
\draw[thick] (-1.7,0.6) circle (1cm);
\fill[black!20!white] (-1.7,0.6) circle (1cm);
\draw[thick] (0,-0.5) circle (1cm);
\fill[black!40!white] (0,-0.5) circle (1cm);
    
\fill (1.7,0.6) circle (1.2pt);
\draw (1.7,0.6) node[below] {\small{$\mathbf{m_2}$}};
\fill (-1.7,0.6) circle (1.2pt);
\draw (-1.7,0.6) node[below] {\small{$\mathbf{m_1}$}};
\fill (0,-0.5) circle (1.2pt);
\draw (0,-0.5) node[below] {\small{$\mathbf s$}};

\draw[thick] (0.85,0.04) circle (2.5pt);
\fill[red!20!white] (0.85,0.04) circle (2.5pt);
\draw[thick] (-0.85,0.04) circle (2.5pt);
\fill[red!20!white] (-0.85,0.04) circle (2.5pt);
\end{tikzpicture}\ \ \ \ \ \ 
\begin{tikzpicture}[scale=0.8,transform shape]
\draw[white] (1.7,0) circle (1cm);
\draw[white] (-1.7,0) circle (1cm); %needed for alignment

\draw (0,-2.7) node[below] {(f) Asymmetric generalization};
\draw[thick] (1,0.86) circle (1cm);
\fill[black!30!white] (1,0.86) circle (1cm);
\draw[thick] (-1,0.86) circle (1cm);
\fill[black!20!white] (-1,0.86) circle (1cm);
\draw[thick] (0,-0.86) circle (1cm);
\fill[black!40!white] (0,-0.86) circle (1cm);

\fill (1,0.86) circle (1.2pt);
\draw (1,0.86) node[below] {\small{$\mathbf{m_2}$}};
\fill (-1,0.86) circle (1.2pt);
\draw (-1,0.86) node[below] {\small{$\mathbf{m_1}$}};
\fill (0,-0.86) circle (1.2pt);
\draw (0,-0.86) node[below] {\small{$\mathbf s$}};

\draw[thick] (0,0.86) circle (2.5pt);
\fill[red!20!white] (0,0.86) circle (2.5pt);
\draw[thick] (0.5,0) circle (2.5pt);
\fill[blue!20!white] (0.5,0) circle (2.5pt);
\draw[thick] (-0.5,0) circle (2.5pt);
\fill[red!20!white] (-0.5,0) circle (2.5pt);
\end{tikzpicture}

\caption{\label{fig1}Energy landscape of the general Potts model for various choices of the coupling constants. In this paper, we focus on the landscape represented in (e).}
\end{figure}
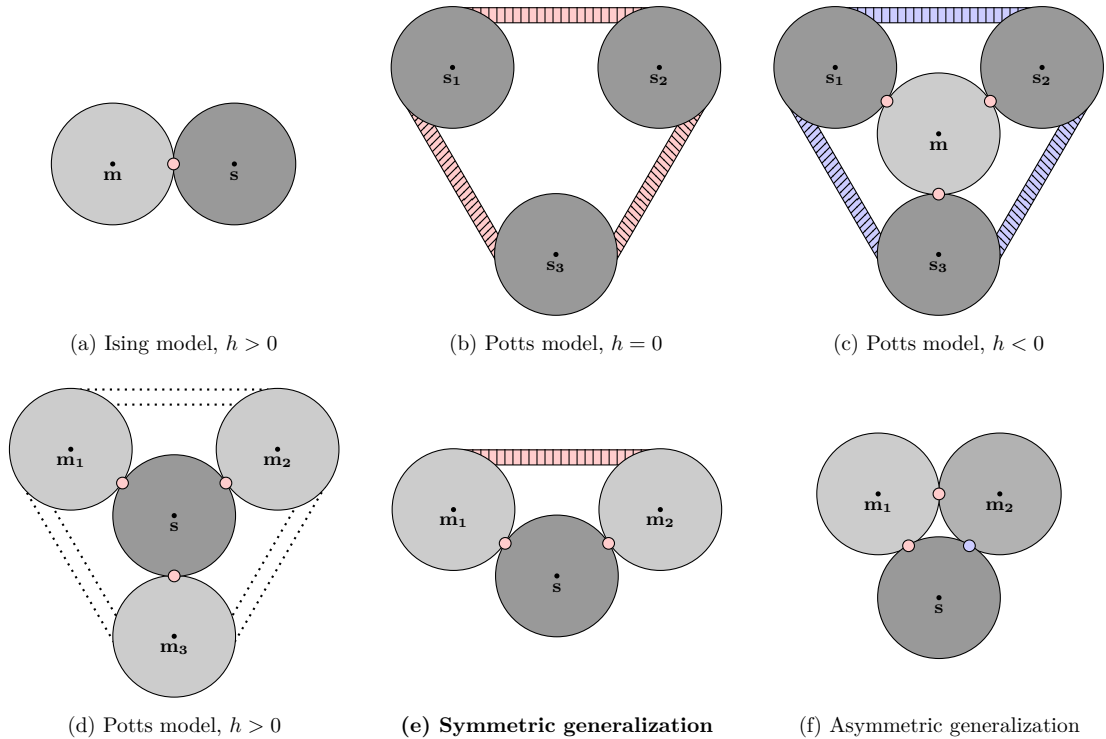
Different choices for the coupling constants $(J_{ij})_{i,j\in S}$ lead to rather different energy landscapes, see Figure \ref{fig1}. In this figure, $\mathbf m$ and $\mathbf s$ indicate respectively the metastable and stable configurations. Moreover, gray (resp.~dark gray) disks indicate the valleys\footnote{These are actually \textit{initial cycles}; see \eqref{e_Cr} for the exact definition.} of the metastable (resp.~stable) configurations. Item (a) illustrates the well-known Ising model with small external field \cite{neves1991critical}, where the red area indicates the collection of so-called critical configurations. Item (b) represents the energy landscape of Potts model with zero external field \cite{bet2021critical,kim2021metastability,nardi2019tunneling}, so that between any $\mathbf{s_i}$ and $\mathbf{s_j}$, there lies a large plateau of saddle configurations depicted in red color. Item (c) corresponds to the degenerate Potts model with negative external field \cite{bet2021metastabilityneg}, where there exists one metastable configuration $\mathbf m$ and multiple stable configurations $\mathbf{s_i}$. If the process starts from the metastable configuration $\mathbf m$, with probability close to one it passes through a critical configuration (colored red) to reach one of the stable configurations $\mathbf{s_i}$. After reaching some $\mathbf{s_i}$, with high probability the process makes transition to another stable configuration $\mathbf{s_j}$ by visiting the large plateau of configurations between $\mathbf{s_i}$ and $\mathbf{s_j}$, colored blue. With probability close to one, the red region is not visited again since the blue region has energy much smaller than the red region. Item (d) represents the degenerate Potts model with positive external field \cite{bet2021metastabilitypos}, where there exist multiple metastable configurations and a unique stable configuration. In this case, starting from any metastable configuration $\mathbf{m_i}$, the process necessarily visits the red region between $\mathbf{m_i}$ and $\mathbf s$ to reach the stable state $\mathbf s$. Indeed, the plateaux between any two metastable configurations (dotted region) have higher energy than the red regions, so that with high probability they are not visited during the metastable transition.

The discussion above allows us to illustrate the main contribution of our work. Indeed, in all the energy landscapes in Figure \ref{fig1}-(a)--(d), each metastable transition involves only one of the following: either a transition through a single critical configuration (which we call \textit{sharp}), or a transition through a plateau of saddle configurations (which we call \textit{complex}). This property simplifies the study of metastable transitions considerably. On the other hand, in our model with general coupling constants, depending on the precise values of the couplings, it may happen that 
\begin{enumerate}
\item[\textit{(D1)}] during a single metastable transition, the system undergoes both sharp and complex transitions, or
\item[\textit{(D2)}] two metastable states do not have the same energy level.
\end{enumerate}
These two situations are represented respectively in Figure \ref{fig1}-(e) and  \ref{fig1}-(f). We begin by discussing the former setting. Here, $\mathbf{m_1}$ and $\mathbf{m_2}$ are symmetric metastable configurations. If the process starts from, say, $\mathbf{m_1}$, then the system either moves to $\mathbf{m_2}$ via the large plateau, or to $\mathbf{s}$ via a critical configuration. Therefore, a full metastable transition from $\mathbf{m_1}$ to $\mathbf{s}$ involves possibly several complex transitions between the two metastable states, and a final sharp transition to $\mathbf{s}$. Note that the coupling constants have been chosen precisely such that the system must overcome the same energy barrier for both types of transitions.  This implies that both types happen with positive probability. Figure \ref{fig1}-(f) is a sketch of the second situation \textit{(D2)}, where $\mathbf{m_1}$ and $\mathbf{m_2}$ are both metastable but $\mathbf{m_2}$ has lower energy than $\mathbf{m_1}$. Starting from $\mathbf{m_1}$, the process either visits the stable state $\mathbf s$ or visits the other metastable state $\mathbf{m_2}$. Moreover, starting from $\mathbf{m_2}$, the process makes a sharp transition to $\mathbf s$. In this case, all the metastble transitions are sharp. \textit{In this paper, we choose the coupling constants such that the resulting energy landscape is the one represented in Figure \ref{fig1}-(e)}, see Section \ref{Sec2} for a more detailed discussion on the coupling constants.

The stochastic evolution is given by a \textit{Glauber-type dynamics}, which is a Markov chain with transition probabilities given by the Metropolis algorithm that only allows single spin flip updates (cf.~\eqref{metropolisTP}). This dynamics is reversible with respect to the Gibbs measure \eqref{e_gibbs}, and hence \eqref{e_gibbs} is the stationary distribution. We study this system in the low-temperature regime, i.e., in the limit $\beta\to\infty$ where $\beta$ is the inverse temperature.

Let us now briefly describe our approach. Without loss of generality, we choose the coupling constants so that the constant configurations $\mathbf2$ and $\mathbf3$ (all spins equal to 2 and 3 respectively) are the metastable configurations and the configuration $\mathbf1$ (all spins equal to 1) is stable. In Section \ref{Sec3.1}, we begin by proving that, indeed, the set of metastable configurations is $\mathcal X^m=\{\mathbf2,\mathbf3\}$ and that the only stable configuration is $\mathcal X^s=\{\mathbf1\}$. We prove that the energy barrier between a metastable state $\mathbf m\in\mathcal{X}^m$ and $\mathcal{X}^s$ is equal to the energy barrier between the two metastable configurations $\mathbf2$ and $\mathbf3$, which is the key feature of our model. We also need to prove that any configuration other than $\mathbf1$, $\mathbf2$ or $\mathbf3$ has a sufficiently lower stability level than $\mathbf2$ or $\mathbf3$. To this end, we carefully analyze the local geometry of configurations to deduce a necessary condition for a configuration to be a local minimum of the energy function. Then, we apply this necessary condition to deduce a non-trivial, but still sufficient upper bound for the stability level, see Proposition \ref{p_stablev}. As a byproduct, we also obtain the \textit{recurrence property} which means that starting from an arbitrary initial configuration, the process visits one of the monochromatic configurations in a much shorter timescale than the metastable transition time, see Theorem \ref{t_rec}. Then, we focus on the transition $\mathbf m\to\mathbf 1$, and we are able to obtain the expected value and distribution of the transition time (cf.~Theorem \ref{t_meta}-(a)). Furthermore, in Theorem \ref{t_meta}-(b)(c), we study the behavior of the \textit{mixing time} and give an estimate of the \textit{spectral gap} in the low-temperature regime, see \eqref{mixingtimedef} and \eqref{spectralgapdef} for the exact definitions. Finally, in Theorems \ref{t_gate1} and \ref{t_gate2} we identify the set of all minimal gates for the same transition, so that then we can characterize in Corollary \ref{c_gate} all the essential saddle configurations visited during the transition.

\paragraph{Mathematical obstacles}
In the Potts model with positive external field analyzed in \cite{bet2021metastabilitypos} there is an arbitrary number of metastable configurations (e.g., two) and one stable configuration. However, the energy level of a canonical path from a metastable configuration to the stable configuration is strictly less than the energy level of a canonical path between two distinct metastable configurations. This indicates that, with high probability, during a metastable transition, no other metastable configurations are visited, see Figure \ref{fig1}-(d) or \cite[Figure 2]{bet2021metastabilitypos}. On the other hand, as mentioned above, in our model the energy barrier between $\mathbf m\in\{\mathbf2,\mathbf3\}$ and $\mathbf1$ equals the energy barrier between $\mathbf2$ and $\mathbf3$. This feature makes it challenging to analyze the model since the system may perform an arbitrary number of excursions between the metastable states before the actual metastable transition takes place. Moreover, the analysis of a transition between two monochromatic configurations (say, from $\mathbf 2$ to $\mathbf 1$) is made significantly harder by possible spin flips involving the third spin (say, 3). Indeed, it is more likely to observe such a spin update in our setting than in the setting of \cite{bet2021metastabilitypos}; see also Remark \ref{r_opt}.

In order to face these obstacles, we define a \textit{projection operator} $\mathscr{P}^{rs}$, $r,s\in S$ on the configuration space $\mathcal X$. This operator replaces all spins $r$ in a configuration with spins $s$, so that there is no spin $r$ in the resulting configuration, which simplifies the analysis greatly. This is made possible because, intuitively, the resulting configuration is more homogeneous than the original one in terms of spin values (i.e., lower energy), since the spin disagreements between $r$ and $s$ disappear. We will define the operator precisely and study it in detail in Section \ref{Sec4}.

Additional obstacle appears when we estimate the stability level of configurations other than $\mathbf1$, $\mathbf2$ and $\mathbf3$. A case of particular difficulty is when a configuration has a small cluster of spin $1$ which is surrounded by both spins $2$ and $3$, see the second subcase of \textbf{(Case 1)} in the proof of Proposition \ref{p_stablev}. To overcome this problem, we introduce a procedure to update the spins $1$ in a distinctive manner which is sufficient to provide an upper bound for the stability level. More precisely, we choose a candidate spin between $2$ and $3$, say $r\in\{2,3\}$, and then update each spin $1$ to $r$ first on the boundary of the $1$-cluster and then in the interior, so that we can explicitly calculate the energy difference of spin updates in the worst-case scenario. We refer to the proof of Proposition \ref{p_stablev} for details.

\paragraph{Literature on the Potts model}
The Potts model, which originates from statistical physics, has been studied extensively both in the mathematics literature and the physics literature. The study of the equilibrium properties of the Potts model and their dependence on $q$ have been investigated on the square lattice $\mathbb Z^d$ in  \cite{baxter1982critical,baxter1973potts}, on the triangular lattice in \cite{baxter1978triangular,enting1982triangular} and on the Bethe lattice in \cite{ananikyan1995phase,de1991metastability,di1987potts}. The mean-field version of the Potts model has been studied in \cite{costeniuc2005complete,ellis1990limit,ellis1992limit,gandolfo2010limit,wang1994solutions}. 
In \cite{bet2021critical,kim2021metastability,nardi2019tunneling} the authors investigate the tunneling behavior of the Potts model with zero external magnetic field. In this energy landscape there are $q$ stable states and no relevant metastable state.
In \cite{nardi2019tunneling}, the authors derive the asymptotic behavior of the first hitting time for the transition between stable configurations, and give results in probability, in expectation and in distribution. They also focus on the behavior of the mixing time and give a lower and an upper bound for the spectral gap. In \cite{bet2021critical}, the authors study the tunneling from a stable state to the other stable configurations and between two stable states. In both cases, they also identify the set of all the critical configurations and the tube of typical trajectories. Finally, in \cite{kim2021metastability}, the authors study the model in dimensions two and three. They give a description of the so-called \textit{gateway configurations} in order to compute the prefactor of the expected metastable transition time. 
The $q$-Potts model with positive and negative external magnetic field has been studied in \cite{bet2021metastabilitypos} and \cite{bet2021metastabilityneg}, respectively. In \cite{bet2021metastabilitypos}, the energy landscape is characterized by $q-1$ multiple degenerate metastable states and a unique stable configuration. On the other hand, in \cite{bet2021metastabilityneg}, there is a unique metastable state and $q-1$ stable configurations. In both scenarios, the authors tackle all the three issues of metastability discussed in the introduction.

% STATE OF ART
\paragraph{Literature on metastability} In this paper we adopt the framework known as \textit{pathwise approach}. 
This was introduced in \cite{cassandro1984metastable} in 1984 by Cassandro, Galves, Olivieri and Vares, and then it was further developed in \cite{alonso1996three,olivieri1995markov,olivieri1996markov,olivieri2005large}. The pathwise approach requires a detailed knowledge of the energy landscape to give precise answers to the three issues of metastability in the form of ad hoc large deviations estimates. In \cite{cirillo2013relaxation,cirillo2015metastability,fernandez2015asymptotically,fernandez2016conditioned,manzo2004essential,nardi2016hitting} the pathwise approach was further built up by separating the study of the first and second issues of metastability from that regarding the tube of typical trajectories since this requires more detailed model-dependant data. The framework of the pathwise approach has been also adopted in \cite{arous1996metastability,cirillo1998metastability,cirillo1996metastability,kotecky1994shapes,nardi1996low,neves1991critical,neves1992behavior,olivieri2005large} to tackle the three issues for Ising-like models with Glauber dynamics. Moreover, it was also used in \cite{apollonio2021metastability,den2003droplet,gaudilliere2005nucleation,hollander2000metastability,nardi2016hitting,zocca2019tunneling} to study the transition time and the gates for Ising-like and hard-core models with Kawasaki and Glauber dynamics. Finally, this approach has been applied in \cite{cirillo2003metastability,cirillo2008competitive,cirillo2008metastability,dai2015fast,procacci2016probabilistic} to probabilistic cellular automata (parallel dynamics).

The so-called \emph{potential-theoretic approach} exploits a suitable Dirichlet form and spectral properties of the transition matrix to give sharp asymptotics for the hitting time. More precisely, this method estimates the leading order of the expected value of the transition time including its \textit{prefactor}, see \cite{bovier2016metastability,bovier2002metastability,bovier2004metastability,cirillo2017sum}. 
In \cite{bashiri2019on,bovier2006sharp,boviermanzo2002metastability,cirillo2017sum,den2012metastability,den2018metastability,jovanovski2017metastability}, the authors adopt the potential-theoretic approach to estimate the prefactor for Ising-like models and the hard-core model for Glauber and Kawasaki dynamics and in \cite{bet2020effect,nardi2012sharp} for parallel dynamics. We refer also to the approaches recently developed in \cite{bianchi2016metastable,gaudillierelandim2014}.

A new technique known as the \textit{martingale approach} has been developed in \cite{beltran2010tunneling,beltran2012tunneling}. This approach is particularly useful to investigate a series of metastable transitions happening at the same timescale, since we can characterize the successive metastable transitions as a Markov chain on a simple representative set; this is called the method of \textit{model reduction}. Applying this methodology, numerous results regarding the metastable systems with complex structures have been obtained \cite{beltran2009zerorange,bianchi2016metastable,kim2021inclusion,kimseo2021inclusion,landim2014totasymzrp,seo2019zerorange}. Moreover, there has been a recent development of technology \cite{landim2021resolvent}, known as the \textit{resolvent approach} to metastability, in which the authors successfully characterize metastability in the sense of \textit{equivalent conditions} on whether a metastable behavior occurs by focusing on a certain class of resolvent equations on the state space. See also \cite{landim2022resolvent} for a recent breakthrough using this resolvent approach to characterize metastability of overdamped Langevin dynamics with highly general potential functions.

\paragraph{Open problems}
As mentioned in the explanation regarding Figure \ref{fig1}, we deal with the case (e) when spins $2$ and $3$ have equal magnitude of both stability and energy, so that the model is symmetric with respect to the action $2\leftrightarrow3$. On the other hand, as in Figure \ref{fig1}-(f), it is also possible to choose coupling constants so that $\mathbf2$ and $\mathbf3$ has the same stability level but with different energy values, say $H(\mathbf2)<H(\mathbf3)$. In this case, the resulting metastable transition on the system will be \textit{totally asymmetric}, in the sense that only the directions $\mathbf3\to\mathbf2\to\mathbf1$ are possible for a metastable transition to occur.

Moreover, for simplicity we only deal with the three-state Potts model, i.e., $q=3$. One can also think of a more complex model, where there are generally $q\ge 3$ spins in the system and both the cases in Figure \ref{fig1}-(e)--(f) emerge within the energy landscape. The authors expect that one can construct energy landscapes which exhibit metastable behavior in a high level of generality. This will be an important research objective in the future.

In addition, one can proceed further to investigate more \textit{quantitative} analysis of metastability, which includes calculating the prefactor of expected transition time (the so-called \textit{Eyring--Kramers formula}), employing the method of model reduction to characterize the law of successive metastable transitions, etc. To this end, one needs to apply the potential-theoretic \cite{bovier2016metastability} and martingale approach \cite{landim2019metastable} to metastability. See also the last remark in Definition \ref{d_refpath2}.

% Outline of the paper
\paragraph{Outline} The outline of the paper is as follows. In Section \ref{Sec2} we introduce the model and its main distinctive features. In Section \ref{Sec3} we give the main results concerning the energy landscape and the first two issues of metastability introduced in the first paragraph. In order to prove these results, in Section \ref{Sec4} we introduce the projection operator and prove some of its interesting properties. In Section \ref{Sec5} we recall some results related to the original Ising model. Finally, Section \ref{Sec6} is devoted to the proof of the main results.

% SEC: Model description
\section{Model description}\label{Sec2}

We consider a generalized three-state Potts model on a finite two-dimensional rectangular lattice graph $\Lambda=(V,E)$, where $V=\{0,\dots,K-1\}\times\{0,\dots,L-1\}$ is the vertex set and $E$ is the edge set. Without loss of generality, we assume throughout the article that
\[
K\le L.
\]
To fix ideas, we assume periodic boundary conditions; more precisely, we also include each pair of vertices lying on opposite sides of the lattice in the edge set, so that we obtain a two-dimensional torus $\mathbb{T}_K\times\mathbb{T}_L$. We say that two vertices $v,w\in V$ are nearest neighbors (or simply neighbors) and denote by $v\sim w$ when they share an edge of $\Lambda$, i.e., when $\{v,w\}\in E$. To each vertex $v\in V$  is associated a spin taking value in $S:=\{1,2,3\}$, and thus $\mathcal X := S^V$ denotes the set of \textit{spin configurations}. We denote by $\textbf{1},\textbf{2},\textbf{3}\in \mathcal X$ those configurations in which all the vertices have spin value $1,2,3$, respectively.

To each configuration $\sigma\in\mathcal X$ we associate the energy $H(\sigma)\in\mathbb{R}$ given by

\begin{align}\label{e_hamiltonian}
H(\sigma):=-\sum_{i\in S}J_{ii}\sum_{\{v,w\}\in E} \mathbbm{1}_{\{\sigma(v)=\sigma(w)=i\}}+\sum_{i,j\in S,\,i< j} J_{ij} \sum_{\{v,w\}\in E} \mathbbm{1}_{\{\{\sigma(v),\sigma(w)\}=\{i,j\}\}}, 
\end{align}
where for any $i,j\in S$, $J_{ij}>0$ are the coupling or interaction constants. The function $H:\mathcal X\to\mathbb R$ is the so-called \textit{Hamiltonian}, or energy function. We remark that our techniques would also work if the Hamiltonian includes an external field, leading to similar results. For simplicity we decided to focus on the case of zero external field.

We assume that
\begin{align}\label{e_Jii}
J_{11}>J_{22}=J_{33}\;\;\;\;\text{and}\;\;\;\;J_{12}=J_{13},
\end{align}
so that, intuitively, spin $1$ is more stable than spins $2$ and $3$, and the Hamiltonian is symmetric with respect to the spin exchange $2\leftrightarrow3$. Then, we write
\begin{align}\label{e_gammadef}
\gamma_1:=J_{11}-J_{22}>0,\;\;\;\;\gamma_{12}:=J_{12}+J_{22}>0\;\;\;\;\text{and}\;\;\;\;\gamma_{23}:=J_{23}+J_{22}>0.
\end{align}
We assume the following conditions throughout this article. Recall the definition of function $f_h(x)$ from \eqref{e_fh}.

\begin{assumption}\label{assump}
The following conditions hold.
\begin{enumerate}
\item[\textup{\textbf{A}}.] $\frac{2\gamma_{12}+\gamma_1}{2\gamma_1}$ is not an integer.
\item[\textup{\textbf{B}}.] $f_{\gamma_1}(\gamma_{12})=2(K+1)\gamma_{23}$.
\item[\textup{\textbf{C}}.] $2\gamma_{12}\ge4\gamma_{23}+\gamma_1$.
\end{enumerate}
\end{assumption}
Intuitively, condition \textbf{A} corresponds to the familiar condition $2/h\notin\mathbb{N}$, where $h>0$ is the external field of the original Ising model, first proposed in \cite[standard case]{neves1991critical}. Condition \textbf{B} implies that the energy barriers of the canonical transitions $\mathbf2\to\mathbf1$ and $\mathbf2\to\mathbf3$ are the same. It is clear that condition \textbf{C} is satisfied if there exists a constant $k>0$ sufficiently large so that $\gamma_{12}\ge k\gamma_1$ and $\gamma_{12}\ge k\gamma_{23}$. This is a natural selection of constants which will be justified in more detail in Section \ref{SecA.3}.
We refer to Section \ref{SecA.3} for more detailed explanation on each of our specific choices. 

\begin{remark}\label{r_opt}
Condition \textbf{C} is an optimal condition on the coefficients and used in the proof of Lemma \ref{l_P12} only. This inequality has the following interpretation. Keeping \eqref{e_gammadef} in mind, we can rewrite condition \textbf{C} as
\begin{align}\label{e_mincond'}
2J_{12}-J_{11}+3J_{22} \ge 4J_{23}+4J_{22}.
\end{align}
By a simple algebraic computation, we can see that the left-hand side of \eqref{e_mincond'} is the energy needed to add a protuberance to a cluster of spins $1$ in the sea of spins $2$, and the right-hand side of \eqref{e_mincond'} is the energy needed to add a new spin $3$ in the sea of spins $2$. Thus, \eqref{e_mincond'} suggests that the dynamics favor a single appearance of an unrelated spin (in this case, $3$) over the enlargement of a cluster of spins $1$. This subtle dynamical behavior constitutes a significant challenge that is not present in the ferromagnetic Ising and Potts models analyzed in, say, \cite{bet2021metastabilitypos}.
\end{remark}

The \textit{Gibbs measure} is defined as a probability distribution on the configuration space $\mathcal X$ given by
\begin{align}\label{e_gibbs}
\mu_\beta(\sigma):=\frac{e^{-\beta H(\sigma)}}{Z_\beta},
\end{align}
where $\beta>0$ is the inverse temperature and where the normalization constant
$$
Z_\beta:=\sum_{\sigma'\in\mathcal X}e^{-\beta H(\sigma')}
$$
is the \textit{partition function}.

The spin system evolves according to a discrete-time Glauber-type Metropolis dynamics, which is described by a single-spin-updating Markov chain $\{X_t^\beta\}_{t\in\mathbb{N}}$ on the space $\mathcal X$ with the following transition probabilities: for $\sigma, \sigma' \in \mathcal X$,
\begin{align}\label{metropolisTP}
P_\beta(\sigma,\sigma'):=
\begin{cases}
Q(\sigma,\sigma')e^{-\beta [H(\sigma')-H(\sigma)]^+}, &\text{if}\ \sigma' \neq \sigma,\\
1-\sum_{\eta:\,\eta \neq \sigma} P_\beta (\sigma, \eta), &\text{if}\ \sigma'=\sigma,
\end{cases}
\end{align}
where $[t]^+:=\max\{0,t\}$ is the positive part of $t$ and 
\begin{align*}
Q(\sigma,\sigma'):=
\begin{cases}
\frac{1}{3|V|}, &\text{if}\ |\{v\in V:\ \sigma(v) \neq \sigma'(v)\}|=1,\\
0, &\text{if}\ |\{v\in V:\ \sigma(v) \neq \sigma'(v)\}|>1.
\end{cases}
\end{align*}
Here, $Q$ is the so-called \textit{connectivity matrix} and it is symmetric and irreducible, i.e., for all $\sigma, \sigma' \in \mathcal X$, there exists a finite sequence of configurations $\omega_0,\dots,\omega_n \in \mathcal X$ such that $\omega_0=\sigma$, $\omega_n=\sigma'$ and $Q(\omega_i,\omega_{i+1})>0$ for $i=0,\dots,n-1$. We call the triplet $(\mathcal X,H,Q)$ \textit{energy landscape}. The resulting stochastic dynamics defined by \eqref{metropolisTP} is reversible with respect to the Gibbs measure defined $\mu_\beta$ in \eqref{e_gibbs}. The law and the corresponding expectation of the dynamics are denoted by $\mathbb{P}$ and $\mathbb{E}$.

% SEC: Main results
\section{Main results}\label{Sec3}
%

% SUBSEC: Stable and metastable states
\subsection{Stable and metastable states}\label{Sec3.1}

We denote by $\mathcal{X}^s$ the set of global minima of the Hamiltonian \eqref{e_hamiltonian}. A simple algebraic computation implies the following proposition.
\begin{proposition}[Identification of $\mathcal X^s$]\label{propstable}
It holds that $\mathcal X^s=\{\mathbf 1\}$.
\end{proposition}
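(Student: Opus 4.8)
The plan is to exploit the edge-by-edge structure of the Hamiltonian \eqref{e_hamiltonian} to produce a uniform lower bound on $H$ which is saturated only by $\mathbf 1$. First I would note that $H(\sigma)$ decomposes as a sum over the edges $\{v,w\}\in E$ of a local term $h_{vw}(\sigma)$ depending only on the unordered pair $(\sigma(v),\sigma(w))$: explicitly, $h_{vw}(\sigma)=-J_{ii}$ when $\sigma(v)=\sigma(w)=i$, and $h_{vw}(\sigma)=+J_{ij}$ when $\{\sigma(v),\sigma(w)\}=\{i,j\}$ with $i\neq j$. On the torus $\mathbb T_K\times\mathbb T_L$ every vertex has degree $4$, so $|E|=2|V|$ and $H(\mathbf 1)=-J_{11}|E|$.

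The key step is the pointwise bound $h_{vw}(\sigma)\ge -J_{11}$ for every edge and every $\sigma\in\mathcal X$, with equality if and only if $\sigma(v)=\sigma(w)=1$. This is immediate from \eqref{e_Jii} together with the positivity of all coupling constants: the six possible values of $h_{vw}$ are $-J_{11},\,-J_{22},\,-J_{33},\,J_{12},\,J_{13},\,J_{23}$, and the relations $J_{11}>J_{22}=J_{33}>0$ together with $J_{12},J_{13},J_{23}>0$ force $-J_{11}$ to be the strict minimum among them.

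Summing over $E$ then yields $H(\sigma)\ge -J_{11}|E|=H(\mathbf 1)$ for all $\sigma\in\mathcal X$, so $\mathbf 1\in\mathcal X^s$. For uniqueness, any $\sigma$ with $H(\sigma)=H(\mathbf 1)$ must satisfy $h_{vw}(\sigma)=-J_{11}$ on every edge, hence $\sigma(v)=\sigma(w)=1$ for all $\{v,w\}\in E$; since $\Lambda$ is connected this forces $\sigma=\mathbf 1$. Therefore $\mathcal X^s=\{\mathbf 1\}$.

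I do not expect a genuine obstacle here: the only point requiring care is identifying $-J_{11}$ — rather than $-J_{22}$ — as the minimal edge weight, which is exactly where the hypothesis $J_{11}>J_{22}$ from \eqref{e_Jii} enters, and which is also the reason the stable state is $\mathbf 1$ and not $\mathbf 2$ or $\mathbf 3$.
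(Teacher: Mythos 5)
Your proof is correct and follows essentially the same route as the paper: both bound the energy edge-by-edge from below by $-J_{11}$ per edge (the paper by discarding the positive disagreement terms and replacing each $J_{ii}$ with $J_{11}$), obtaining $H(\sigma)\ge -2J_{11}KL$ with equality precisely when every edge is a $11$-edge, i.e.\ $\sigma=\mathbf 1$. The only cosmetic difference is that you phrase it as a pointwise bound on an edge-local term $h_{vw}$; note also that equality already forces every vertex to carry spin $1$ since each vertex lies on some edge, so no connectivity argument is needed.
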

\begin{proof}
By the definition \eqref{e_hamiltonian} and the assumption \eqref{e_Jii}, it is straightforward that
\[
H(\sigma)\ge-\sum_{i\in S}J_{ii}\sum_{\{v,w\}\in E}\mathbbm{1}_{\{\sigma(v)=\sigma(w)=i\}}\ge-J_{11}\sum_{i\in S}\sum_{\{v,w\}\in E}\mathbbm{1}_{\{\sigma(v)=\sigma(w)=i\}}.
\]
The last double summation is exactly $\sum_{\{v,w\}\in E}\mathbbm{1}_{\{\sigma(v)=\sigma(w)\}}$, which is clearly bounded above by $|E|=2KL$. Therefore, we conclude that $H(\sigma)\ge-2J_{11}KL$. The equality holds if and only if $\sigma(v)=\sigma(w)=1$ for all $\{v,w\}\in E$, which is equivalent to $\sigma=\mathbf1$.
\end{proof}
Next, we identify the metastable configurations. To this end, we need to define some crucial notions. A \textit{path} is a finite sequence $\omega$ of configurations $\omega_0,\dots,\omega_n \in \mathcal X$, $n \in \mathbb{N}$, such that $Q(\omega_i,\omega_{i+1})>0$ for $i=0,\dots,n-1$. We denote by $\Omega_{\sigma,\sigma'}$ the set of all paths between $\sigma$ and $\sigma'$. For convenience of notation, we sometimes write $\omega:\sigma\to \sigma'$ to indicate $\omega\in\Omega_{\sigma,\sigma'}$.
For any path $\omega=(\omega_0,\dots,\omega_n)$, we define the \textit{height} of $\omega$ as
\begin{align}\label{height}
\Phi_\omega:=\max_{i=0,\dots,n} H(\omega_i).
\end{align}
For any pair of configurations $\sigma, \sigma' \in \mathcal X$, the \textit{communication height} $\Phi(\sigma,\sigma')$ between $\sigma$ and $\sigma'$ is defined as the minimal height among all paths $\omega:\sigma\to\sigma'$, i.e., 
\begin{align}\label{e_comheightdef}
\Phi(\sigma,\sigma'):=\min_{\omega:\sigma \to \sigma'} \Phi_\omega = \min_{\omega:\sigma \to \sigma'} \max_{\eta \in \omega} H(\eta).
\end{align}
We define the set of \textit{optimal paths} between $\sigma, \sigma' \in\mathcal X$ as
\begin{align}\label{optpaths}
\Omega_{\sigma,\sigma'}^{opt}:=\{\omega\in\Omega_{\sigma,\sigma'}:\ \Phi_\omega=\Phi(\sigma,\sigma')\}.
\end{align}
Accordingly, for disjoint subsets $\mathcal{A}$ and $\mathcal{B}$ of $\mathcal{X}$, we define
\[
\Phi(\mathcal{A},\mathcal{B}):=\min_{\sigma\in\mathcal{A}}\min_{\sigma'\in\mathcal{B}}\Phi(\sigma,\sigma')
\]
and
\[
\Omega_{\mathcal{A},\mathcal{B}}^{opt}:=\{\omega\in\Omega_{\sigma,\sigma'}:\ \sigma\in\mathcal{A},\;\sigma'\in\mathcal{B},\;\Phi_\omega=\Phi(\mathcal{A},\mathcal{B})\}.
\]

\noindent For any $\sigma \in \mathcal X$, let $\mathcal{I}_\sigma:=\{\eta \in \mathcal X:\ H(\eta)<H(\sigma)\}$ 
be the set of configurations with energy strictly smaller than $H(\sigma)$.
Then, we define the \textit{stability level} of $\sigma$ as
\begin{align}\label{stabilitylevel}
V_\sigma:=\Phi(\sigma,\mathcal{I}_\sigma)-H(\sigma).
\end{align}
If $\mathcal{I}_\sigma = \varnothing$ (i.e. if $\sigma\in\mathcal{X}^s$), we set $V_\sigma:=\infty$.
Finally, we define the collection of \textit{metastable states} as 
\begin{align}\label{metastableset}
\mathcal X^m:=\big\{\eta \in \mathcal X:\ V_\eta=\max_{\sigma\in\mathcal X\setminus\mathcal X^s} V_\sigma\big\}.
\end{align}
Furthermore, for any $\sigma\in\mathcal X$ and any $\varnothing\neq\mathcal A\subset\mathcal X$, we set
\begin{align}
\Gamma(\sigma,\mathcal A):=\Phi(\sigma,\mathcal A)-H(\sigma).
\end{align}
From the definition it immediately follows that $V_\sigma=\Gamma(\sigma,\mathcal{I}_\sigma)$.\medskip{}

First, we investigate the stability level of configurations $\mathbf2$ and $\mathbf3$. We define (cf. Assumption \ref{assump}-\textbf{B})
\begin{align}\label{e_consts}
\ell^\star:=\Big\lceil\frac{2\gamma_{12}+\gamma_1}{2\gamma_1}\Big\rceil\;\;\;\;\text{and}\;\;\;\;\Gamma^\star:=f_{\gamma_1}(\gamma_{12})=2(K+1)\gamma_{23}.
\end{align}
By definition \eqref{e_fh} we have that
\begin{align}\label{Gammametastable}
\Gamma^\star=4\ell^\star\Big(\gamma_{12}+\frac{\gamma_1}2\Big)-2\gamma_1(\ell^{\star2}-\ell^\star+1)=4\ell^\star\gamma_{12}-2\gamma_1(\ell^{\star2}-2\ell^\star+1).
\end{align}
The result below shows that the communication height between stable and metastable states is exactly $\Gamma^\star$ above the energy level of metastable configurations.

\begin{theorem}[Communication height]\label{t_comheight}
It holds that
\[
\Gamma(\mathbf2,\mathbf1)=\Gamma(\mathbf3,\mathbf1)=\Gamma(\mathbf2,\mathbf3)=\Gamma^\star.
\]
\end{theorem}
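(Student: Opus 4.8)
The plan is to establish the two equalities $\Gamma(\mathbf2,\mathbf1)=\Gamma^\star$ and $\Gamma(\mathbf2,\mathbf3)=\Gamma^\star$; the third one, $\Gamma(\mathbf3,\mathbf1)=\Gamma^\star$, then comes for free, since the involution of $\mathcal X$ that exchanges the spin values $2$ and $3$ fixes $\mathbf1$, swaps $\mathbf2$ and $\mathbf3$, and preserves $H$ by \eqref{e_Jii}, hence also preserves $\Phi$. Each of the two equalities splits into a matching upper and lower bound, and the point where everything comes together is Assumption \ref{assump}-\textbf{B}: it forces the two a priori unrelated quantities $f_{\gamma_1}(\gamma_{12})$ — the nucleation cost of a cluster of spin $1$ in a sea of spin $2$ — and $2(K+1)\gamma_{23}$ — the cost of a zero-field ``strip'' transition between spins $2$ and $3$ — to coincide, their common value being $\Gamma^\star$ by \eqref{e_consts}.

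For the upper bounds I would exhibit explicit reference paths. For $\mathbf2\to\mathbf1$ I use the classical Ising droplet path, which stays inside $\{1,2\}^V$: flip spins $2\to1$ so as to grow quasi-square clusters of spin $1$ up to side $\ell^\star$, add one protuberance, and then fill the torus row by row. Restricted to $\{1,2\}^V$, the Hamiltonian \eqref{e_hamiltonian} equals, up to an additive constant, the two-state Ising Hamiltonian with coupling $\tfrac{2\gamma_{12}+\gamma_1}{2}$ and field $2\gamma_1$; Assumption \ref{assump}-\textbf{A} (the analogue of $2/h\notin\mathbb{N}$) makes $\ell^\star$ unambiguous, and the direct droplet computation recorded in \eqref{Gammametastable} shows the height of this path equals $H(\mathbf2)+f_{\gamma_1}(\gamma_{12})=H(\mathbf2)+\Gamma^\star$. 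For $\mathbf2\to\mathbf3$ I would either concatenate the above path with the reflected path $\mathbf1\to\mathbf3$ (since $H(\mathbf2)=H(\mathbf3)$, the resulting height is still $H(\mathbf2)+\Gamma^\star$), or use the strip reference path of Definition \ref{d_refpath2}, which lives in $\{2,3\}^V$, grows a cluster of spin $3$ into a column wrapping $\mathbb{T}_K$, slides it across $\mathbb{T}_L$ at constant energy, and then erodes it; its height is $H(\mathbf2)+2(K+1)\gamma_{23}=H(\mathbf2)+\Gamma^\star$.

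The lower bound $\Gamma(\mathbf2,\mathbf1)\ge\Gamma^\star$ uses the projection operator of Section \ref{Sec4}: for any $\omega\colon\mathbf2\to\mathbf1$, the map $\mathscr P^{32}$ turns $\omega$ into a path $\mathscr P^{32}\omega\colon\mathbf2\to\mathbf1$ inside $\{1,2\}^V$, and — because $J_{13}=J_{12}$ and $J_{33}=J_{22}$ — it never raises the energy, so $\Phi_\omega\ge\Phi_{\mathscr P^{32}\omega}$, which is at least the two-state Ising communication height between $\mathbf2$ and $\mathbf1$, namely $H(\mathbf2)+f_{\gamma_1}(\gamma_{12})=H(\mathbf2)+\Gamma^\star$ by Section \ref{Sec5}; minimizing over $\omega$ gives the bound. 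The lower bound $\Gamma(\mathbf2,\mathbf3)\ge\Gamma^\star$ is the crux and needs a different device, since $\mathscr P^{32}$ collapses both $\mathbf2$ and $\mathbf3$ to $\mathbf2$. I would argue by contradiction: assume $\Phi_\omega<H(\mathbf2)+\Gamma^\star$ for some $\omega\colon\mathbf2\to\mathbf3$. First I would show that no configuration along $\omega$ contains a supercritical cluster of spin $1$: truncating $\omega$ at the first such configuration and applying $\mathscr P^{32}$ to that segment would yield, inside $\{1,2\}^V$, a path from $\mathbf2$ to a configuration whose spin-$1$ part is still supercritical, which cannot be reached from $\mathbf2$ in $\{1,2\}^V$ below energy $H(\mathbf2)+f_{\gamma_1}(\gamma_{12})$, forcing $\Phi_\omega\ge H(\mathbf2)+\Gamma^\star$. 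Granting this, every configuration on $\omega$ has only subcritical spin-$1$ clusters, and this is exactly the regime in which the complementary projection $\mathscr P^{12}$ does not increase the energy — this is where Assumption \ref{assump}-\textbf{C}, $2\gamma_{12}\ge4\gamma_{23}+\gamma_1$, enters (up to the rounding hidden in $\ell^\star$ it says precisely that merging a critical-size square of spin $1$, with boundary entirely against spin $3$, into the sea is not energetically favorable), as formalized in Lemma \ref{l_P12}. Then $\mathscr P^{12}\omega$ is a path from $\mathbf2$ to $\mathbf3$ contained in $\{2,3\}^V$ of height $<H(\mathbf2)+2(K+1)\gamma_{23}$; but on $\{2,3\}^V$ the Hamiltonian is, up to a constant, the \emph{zero-field} Ising Hamiltonian (since $J_{22}=J_{33}$) with coupling $\gamma_{23}$, whose communication height between its two ground states on $\mathbb{T}_K\times\mathbb{T}_L$ equals $H(\mathbf2)+2(K+1)\gamma_{23}$ by Section \ref{Sec5} — a contradiction.

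I expect the main obstacle to be the $\mathbf2\to\mathbf3$ lower bound, and within it the quantitative control of $\mathscr P^{12}$: one must fix a notion of ``subcritical'' spin-$1$ cluster for which simultaneously (i) such clusters cannot be produced by a path that stays strictly below $H(\mathbf2)+\Gamma^\star$ — an Ising statement handled through $\mathscr P^{32}$ as above — and (ii) $\mathscr P^{12}$ is energy non-increasing on every configuration all of whose spin-$1$ clusters are subcritical, which boils down to an isoperimetric estimate on spin-$1$ cluster boundaries and is exactly where Assumption \ref{assump}-\textbf{C} is sharp. Everything else is either a routine droplet computation, a quoted property of $\mathscr P^{rs}$ from Section \ref{Sec4}, or the zero-field Ising tunneling height quoted from Section \ref{Sec5}.
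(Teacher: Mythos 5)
Your overall architecture is the same as the paper's: reference paths give the upper bounds (Proposition \ref{p_123UB} via Section \ref{SecA.4}); the projection $\mathscr{P}^{32}$ plus the positive-field Ising height (Proposition \ref{p_gatepos}-(a)) gives $\Gamma(\mathbf2,\mathbf1)\ge\Gamma^\star$ exactly as in Proposition \ref{p_21LB}; and for $\Gamma(\mathbf2,\mathbf3)\ge\Gamma^\star$ you argue by contradiction, control the spin-$1$ content along a sub-barrier path, apply $\mathscr{P}^{12}$ (Lemma \ref{l_P12}), and quote the zero-field Ising height (Proposition \ref{p_gatezero}-(a)), which is the paper's Proposition \ref{p_23LB}. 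The symmetry reduction for $\Gamma(\mathbf3,\mathbf1)$ and the identification of the restricted Hamiltonians with Ising Hamiltonians are also as in the paper.

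The one genuine gap is in the step you yourself flag as the crux: your claim that a configuration with a supercritical spin-$1$ part ``cannot be reached from $\mathbf2$ in $\{1,2\}^V$ below energy $H(\mathbf2)+f_{\gamma_1}(\gamma_{12})$'' is asserted, not derived. It does not follow from the Ising facts quoted in Section \ref{Sec5}, which concern only paths terminating at $\mathbf1$ (communication height and gates); to use them you would still have to continue the projected path from the truncation point to $\mathbf1$ below the barrier, which is not automatic for an arbitrary sub-barrier configuration with a large $1$-cluster. The paper closes exactly this hole differently: since $\mathfrak{N}_1$ changes by at most one per spin flip, a path with $\mathfrak{N}_1$ ever exceeding $\ell^\star(\ell^\star-1)$ must pass through the level set $\mathfrak{N}_1=\ell^\star(\ell^\star-1)+1$, and Lemma \ref{l_critical}-(a) shows by a direct isoperimetric estimate (after $\mathscr{P}^{32}$, with the three cases cross/bridge/no-bridge) that every configuration on that level set has energy at least $H(\mathbf2)+\Gamma^\star$. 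Your route would need an equivalent of this lemma, or a citation to a finer structural Ising result about the initial cycle of the metastable state, neither of which you supply. A second, smaller mismatch: you phrase the hypothesis for applying $\mathscr{P}^{12}$ as ``all spin-$1$ clusters subcritical,'' but Lemma \ref{l_P12} requires a bound on the \emph{total} count $\mathfrak{N}_1(\sigma)\le\ell^{\star2}$ (together with the energy bound); a per-cluster bound does not directly give this, so you would either have to control the total number of spins $1$ (as the paper does, obtaining $\mathfrak{N}_1\le\ell^\star(\ell^\star-1)$ along the whole path) or prove a per-cluster variant of the projection lemma. Apart from this missing ingredient, the proposal matches the paper's proof.
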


By definition, the previous theorem is equivalent to $\Phi(\mathbf2,\mathbf1)=\Phi(\mathbf3,\mathbf1)=\Phi(\mathbf2,\mathbf3)=H(\mathbf2)+\Gamma^\star$. This theorem is proved in Section \ref{Sec6.1}.\medskip{}

Next, we claim that stability levels of any other configurations are significantly smaller than $\Gamma^\star$. We present a proof of the following proposition in Section \ref{Sec6.2}.

\begin{proposition}[Stability level of other configurations]\label{p_stablev}
For any configuration $\eta\in\mathcal{X}\setminus\{\mathbf1,\mathbf2,\mathbf3\}$,
\[
V_\eta\le2(\ell^\star-1)(\gamma_{23}+\gamma_1).
\]
\end{proposition}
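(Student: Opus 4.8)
The plan is to exhibit, for each $\eta\in\mathcal X\setminus\{\mathbf1,\mathbf2,\mathbf3\}$, a path $\omega\colon\eta\to\zeta$ with $H(\zeta)<H(\eta)$ and $\Phi_\omega\le H(\eta)+2(\ell^\star-1)(\gamma_{23}+\gamma_1)$, which by \eqref{stabilitylevel} gives the claim. If $\eta$ is not a local minimum of $H$, a single spin flip decreases the energy, so $V_\eta=0$ and there is nothing to prove; hence we may assume $\eta$ is a local minimum. The first preparatory step is to write the energy cost of a single spin flip at a site $v$ in terms of the colours of the four neighbours of $v$; these costs are linear combinations of $\gamma_1,\gamma_{12},\gamma_{23}$ and yield a short list of necessary conditions for $\eta$ to be a local minimum, restricting how many neighbours of each colour a site of a given colour may have (in particular ruling out isolated sites and other small defects).

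The second, conceptually cleanest step is to observe that a maximal monochromatic region $R$ of colour $3$ can always be recoloured to $2$ one site at a time — peeling from the part of the inner boundary of $R$ adjacent to colour $2$ inwards — along a path on which $H$ is non-increasing: because $J_{22}=J_{33}$ and $J_{12}=J_{13}$, each such flip only turns edges $2$–$3$ into $2$–$2$ (a strict decrease) and edges $3$–$3$ or $1$–$3$ into $2$–$3$ or $1$–$2$ (no increase), as long as we always flip a site with at least as many colour-$2$ as colour-$3$ neighbours, which the peeling order guarantees. Thus, if $\eta$ contains a colour-$3$ region adjacent to colour $2$ (or, symmetrically, a colour-$2$ region adjacent to colour $3$), then $V_\eta=0$. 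Combined with the fact from Section~\ref{Sec4} that the projection $\mathscr P^{32}$ does not increase the energy, this reduces the proposition to configurations in which colours $2$ and $3$ are never adjacent.

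It then remains to treat two kinds of configurations. The first is when $\eta$ involves only colours $1$ and a single $r\in\{2,3\}$: here one invokes the structure of local minima of the associated two-colour (Ising-type) Hamiltonian recalled in Section~\ref{Sec5}, namely that every monochromatic region is, up to wrapping, a rectangle. Since $\eta\neq\mathbf1,\mathbf r$ there is such a rectangle $R$; if its shorter side is $<\ell^\star$ we dissolve $R$ one row at a time, where eroding a full row costs $2\gamma_1$ per interior site and recovers $2\gamma_{12}-\gamma_1$ at the last site, so the running maximum over the erosion is at most $2\gamma_1(\ell^\star-1)$ and the procedure is globally favourable; if instead both sides of $R$ are $\ge\ell^\star$ we grow it one full row at a time, paying $2\gamma_{12}-\gamma_1$ for the first protuberance and recovering $2\gamma_1$ per subsequent site, which by the choice \eqref{e_consts} of $\ell^\star$ is again globally favourable, with barrier $2\gamma_{12}-\gamma_1\le 2(\ell^\star-1)(\gamma_{23}+\gamma_1)$. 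The second kind is when $\eta$ uses colour $1$ together with $2$ and/or $3$ (still with $2,3$ non-adjacent); the crucial and most delicate case is a small cluster $C$ of colour $1$ whose boundary touches both colours $2$ and $3$. There we fix a target colour $r\in\{2,3\}$ and recolour $C$ into $r$ by first converting the boundary sites of $C$ and only afterwards its interior sites, bounding the running cost: losing a $1$–$1$ bond costs a multiple of $\gamma_1$, creating a $2$–$3$ bond costs a multiple of $\gamma_{23}$, turning an edge $1$–$r'$ with $r'\neq r$ into $r$–$r'$ is favourable (using $\gamma_{12}\ge\gamma_{23}$ under our assumptions), and the smallness of $C$ — forced since otherwise $\eta$ falls under the growth analysis above — keeps the accumulated cost below $2(\ell^\star-1)(\gamma_{23}+\gamma_1)$.

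The main obstacle is exactly this last subcase: a single recolouring of a boundary site of $C$ can be quite expensive, and only a carefully chosen order of flips keeps the transient energy below the target bound, so the heart of the proof is the bookkeeping controlling the worst-case running cost of dissolving a small colour-$1$ cluster wedged between the two metastable colours — this is the ``distinctive'' updating procedure alluded to in the discussion of mathematical obstacles. A secondary technical point is that local minima need not be exact unions of rectangles once three colours are present, so one must check that the reductions of the second step (recolouring colour-$3$ regions, applying $\mathscr P^{32}$) terminate at a configuration to which the Ising-type structure of Section~\ref{Sec5} applies.
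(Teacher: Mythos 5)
The central reduction in your second step is false, and it is load-bearing. You claim that a maximal colour-$3$ region adjacent to colour $2$ can always be recoloured to $2$ one site at a time along an energy-non-increasing path, because "the peeling order guarantees" that each flipped site has at least as many colour-$2$ as colour-$3$ neighbours, and you conclude $V_\eta=0$ whenever colours $2$ and $3$ are adjacent. This fails as soon as the $3$-region has thickness at least two: the first site you peel from such a region has one colour-$2$ neighbour and three colour-$3$ neighbours, so (since the flip changes the energy by $\gamma_{23}\,[\#\{3\text{-neighbours}\}-\#\{2\text{-neighbours}\}]$) it costs $2\gamma_{23}>0$, and no ordering avoids this initial uphill step. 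A concrete counterexample to your conclusion is $\eta\in\mathcal{X}^{23}$ made of alternating strips of colour $2$ and colour $3$, each of width at least $2$: every single spin flip strictly increases the energy (a boundary flip to the other colour costs $2\gamma_{23}$; a flip to colour $1$ costs $4\gamma_{12}-\gamma_{23}>0$ by Assumption \textbf{C}), so $\eta$ is a strict local minimum with $V_\eta\ge 2\gamma_{23}>0$, even though every $3$-region is adjacent to colour $2$. Hence your reduction "to configurations in which colours $2$ and $3$ are never adjacent" eliminates, wrongly, exactly the configurations in $\mathcal{X}^{23}$ (and, more generally, any local minimum where thick $2$- and $3$-regions meet), and your subsequent two cases never treat them. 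The paper handles these separately (its Case 2): it first removes internal angles of $\frac32\pi$ at non-positive cost, then either eats strips row by row, paying a controlled excursion of $2\gamma_{23}$ at the start of each row (so the conclusion is $V_\eta\le 2\gamma_{23}$, not $V_\eta=0$), or exploits the special spin disposition of Figure \ref{fig8} when a $\frac12\pi$ angle remains. Your invocation of $\mathscr{P}^{32}$ cannot repair this either: the projection compares energies of $\eta$ and $\mathscr{P}^{32}\eta$ but does not produce an admissible path from $\eta$, so it says nothing about $V_\eta$ by itself.

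The remainder of your plan (growth of a $1$-rectangle with a side $\ge\ell^\star$ at cost $2\gamma_{12}-\gamma_1$, and the boundary-first recolouring of a small $1$-cluster wedged between colours $2$ and $3$, with the $\gamma_1$/$\gamma_{23}$ bookkeeping) is in the same spirit as the paper's Case 1 and could be made to work, but it presupposes that the $1$-cluster is a rectangle (the paper proves this for configurations in $\mathscr{M}\cup\bar{\mathscr{M}}$ via the tile analysis, Lemma \ref{l_1cluster}) and, as it stands, it only kicks in after the broken reduction. To fix the proof you must drop the claim $V_\eta=0$ at $2$--$3$ interfaces and instead give a genuine argument, with a quantitative bound of order $2\gamma_{23}$, for local minima containing no $1$-cluster, as the paper does.
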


\begin{remark}
By \eqref{e_consts}, it holds that $\ell^\star< \frac{2\gamma_{12}+\gamma_1}{2\gamma_1} +1 $. Employing this inequality to the formula of $\Gamma^\star$ given in \eqref{Gammametastable}, we obtain
\[
\Gamma^\star >4\ell^\star\gamma_{12}-2\gamma_1\frac{2\gamma_{12}+\gamma_1}{2\gamma_1}(\ell^\star-1)=(2\gamma_{12}-\gamma_1)\ell^\star +(2\gamma_{12}+\gamma_1).
\]
Again using $\ell^\star< \frac{2\gamma_{12}+\gamma_1}{2\gamma_1} +1$ on the last term, we have
\[
\Gamma^\star>(2\gamma_{12}-\gamma_1)\ell^\star+2\gamma_1(\ell^\star-1)=2\gamma_{12}\ell^\star+\gamma_1(\ell^\star-2)\ge 2\gamma_{12}\ell^\star.
\]
On the other hand, the upper bound appearing in Proposition \ref{p_stablev} is estimated via Assumption \ref{assump}-\textbf{C} as
\[
2(\ell^\star-1)(\gamma_{23}+\gamma_1)<2\ell^\star\times\frac{2\gamma_{12}}{k}=\frac{4}{k}\gamma_{12}\ell^\star
\]
where $k$ is sufficiently large. Therefore, we conclude that
\[
\Gamma^\star > \frac{k}{2} \times 2(\ell^\star-1)(\gamma_{23}+\gamma_1),
\]
which implies that the stability levels of configurations other than $\mathbf1$, $\mathbf2$ and $\mathbf3$ are significantly smaller than the stability level of metastable configurations $\mathbf2$ and $\mathbf3$.
\end{remark}

By combining Theorem \ref{t_comheight} and Proposition \ref{p_stablev}, we now identify the set $\mathcal{X}^m$.

\begin{theorem}[Identification of $\mathcal X^m$]\label{t_meta}
We have $V_\mathbf{2}=V_\mathbf{3}=\Gamma^\star$ and $\mathcal X^m=\{\mathbf 2,\mathbf 3\}$.
\end{theorem}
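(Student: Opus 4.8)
The plan is to derive Theorem \ref{t_meta} by combining Proposition \ref{propstable}, Theorem \ref{t_comheight} and Proposition \ref{p_stablev}, together with the observation (made in the remark following Proposition \ref{p_stablev}, via Assumption \ref{assump}-\textbf{C}) that $V^\star:=2(\ell^\star-1)(\gamma_{23}+\gamma_1)$ satisfies $V^\star<\Gamma^\star$. I first compute $V_{\mathbf 2}$; the value of $V_{\mathbf 3}$ then follows from the $2\leftrightarrow3$ symmetry of $H$ recorded in \eqref{e_Jii}, which in particular gives $H(\mathbf 3)=H(\mathbf 2)$.

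For the upper bound $V_{\mathbf 2}\le\Gamma^\star$: by Proposition \ref{propstable}, $\mathbf 1$ is the unique global minimum, so $H(\mathbf 1)<H(\mathbf 2)$ and hence $\mathbf 1\in\mathcal I_{\mathbf 2}$; therefore $V_{\mathbf 2}=\Phi(\mathbf 2,\mathcal I_{\mathbf 2})-H(\mathbf 2)\le\Phi(\mathbf 2,\mathbf 1)-H(\mathbf 2)=\Gamma^\star$ by Theorem \ref{t_comheight}. For the lower bound $V_{\mathbf 2}\ge\Gamma^\star$ I would argue by contradiction with an energy-descent procedure. Assume there is a path $\omega:\mathbf 2\to\eta_0$ with $\eta_0\in\mathcal I_{\mathbf 2}$ and $\Phi_\omega<H(\mathbf 2)+\Gamma^\star$. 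The elementary but crucial remark is that every $\eta\in\mathcal I_{\mathbf 2}$ satisfies $\eta\ne\mathbf 2$ and, since $H(\mathbf 3)=H(\mathbf 2)$, also $\eta\ne\mathbf 3$; thus either $\eta=\mathbf 1$ or $\eta\in\mathcal X\setminus\{\mathbf 1,\mathbf 2,\mathbf 3\}$. Now iterate: as long as the current endpoint $\eta_i\ne\mathbf 1$, Proposition \ref{p_stablev} yields $V_{\eta_i}\le V^\star$, hence a path from $\eta_i$ to some $\eta_{i+1}\in\mathcal I_{\eta_i}$ of height at most $H(\eta_i)+V^\star\le H(\mathbf 2)+V^\star<H(\mathbf 2)+\Gamma^\star$; moreover $\mathcal I_{\eta_i}\subseteq\mathcal I_{\mathbf 2}$ because $H(\eta_{i+1})<H(\eta_i)<H(\mathbf 2)$. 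Since the energies $H(\eta_0)>H(\eta_1)>\cdots$ strictly decrease and $\mathcal X$ is finite, the procedure must terminate, and by the dichotomy above it can only terminate at $\eta_N=\mathbf 1$ (for any $\eta\ne\mathbf 1$ one has $\mathbf 1\in\mathcal I_\eta$, so $\mathcal I_\eta\ne\varnothing$ and the iteration never gets stuck elsewhere). Concatenating $\omega$ with the sub-paths produced along the way yields a path $\mathbf 2\to\mathbf 1$ of height $<H(\mathbf 2)+\Gamma^\star$, contradicting $\Phi(\mathbf 2,\mathbf 1)=H(\mathbf 2)+\Gamma^\star$ from Theorem \ref{t_comheight}. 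Hence $V_{\mathbf 2}=\Gamma^\star$, and by symmetry $V_{\mathbf 3}=\Gamma^\star$.

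It then remains to identify $\mathcal X^m$. By Proposition \ref{propstable}, $\mathcal X\setminus\mathcal X^s=\mathcal X\setminus\{\mathbf 1\}$; we have just shown $V_{\mathbf 2}=V_{\mathbf 3}=\Gamma^\star$, while Proposition \ref{p_stablev} together with $V^\star<\Gamma^\star$ gives $V_\sigma\le V^\star<\Gamma^\star$ for every $\sigma\in\mathcal X\setminus\{\mathbf 1,\mathbf 2,\mathbf 3\}$. Therefore $\max_{\sigma\in\mathcal X\setminus\mathcal X^s}V_\sigma=\Gamma^\star$, attained exactly at $\mathbf 2$ and $\mathbf 3$, so $\mathcal X^m=\{\mathbf 2,\mathbf 3\}$ by \eqref{metastableset}. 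I expect the main obstacle to be the lower bound $V_{\mathbf 2}\ge\Gamma^\star$: the descent argument must be set up so that it never leaves $\mathcal I_{\mathbf 2}$ — which is precisely what prevents it from getting trapped at the equal-energy configurations $\mathbf 2$ or $\mathbf 3$ — and it relies essentially on the strict separation $V^\star<\Gamma^\star$ coming from Assumption \ref{assump}-\textbf{C}.
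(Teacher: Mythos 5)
Your proposal is correct and follows essentially the same route as the paper: the upper bound $V_{\mathbf 2}\le\Gamma^\star$ from Theorem \ref{t_comheight}, and the lower bound by contradiction via an energy-descent chain built from the stability levels of Proposition \ref{p_stablev}, concatenated into a path $\mathbf 2\to\mathbf 1$ of height below $H(\mathbf 2)+\Gamma^\star$. The only cosmetic difference is that the paper isolates the descent argument as a separate claim ($\Gamma(\eta,\mathbf 1)<\Gamma^\star$ for $\eta\notin\{\mathbf 1,\mathbf 2,\mathbf 3\}$, which it reuses later for Theorem \ref{t_trans}), whereas you inline it, exploiting that every intermediate configuration lies in $\mathcal I_{\mathbf 2}$ and hence differs from $\mathbf 2,\mathbf 3$.
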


\begin{proof}
To prove the theorem, it suffices to demonstrate that
\begin{align}\label{e_V2Gamma}
V_\mathbf2=\Gamma^\star.
\end{align}
Indeed, then by symmetry we also have $V_\mathbf3=\Gamma^\star$, and combining these with Proposition \ref{p_stablev} we conclude that $\mathcal{X}^m=\{\mathbf2,\mathbf3\}$.

Before proving \eqref{e_V2Gamma}, we claim that
\begin{align}\label{e_etaGamma}
\Gamma(\eta,\mathbf1)<\Gamma^\star\;\;\;\;\text{for all }\eta\in\mathcal{X}\setminus\{\mathbf1,\mathbf2,\mathbf3\}.
\end{align}
To prove the claim, we fix $\eta\in\mathcal{X} \setminus \{\mathbf1, \mathbf2, \mathbf3\}$. Starting from $\eta$, we find another configuration $\eta_1$ with lower energy such that an optimal path from $\eta$ to $\eta_1$ realizes the stability level $V_\eta$. Repeating this algorithm, since $\mathcal{X}$ is finite and $\mathcal{X}^s=\{\mathbf1\}$, we can take a finite sequence $\eta=\eta_0,\eta_1,\dots,\eta_m=\mathbf1$ of configurations such that $H(\eta_i)>H(\eta_{i+1})$ and $V_{\eta_i}=\Gamma(\eta_i,\eta_{i+1})$ for all $0\le i\le m-1$. Then, we estimate
\[
\Gamma(\eta,\mathbf1)=\Phi(\eta,\mathbf1)-H(\eta)\le\max_{0\le i\le m-1}\Phi(\eta_i,\eta_{i+1})-H(\eta),
\]
where the inequality holds by concatenating the $m-1$ optimal paths from $\eta_i$ to $\eta_{i+1}$. By construction, the last term equals
\[
\max_{0\le i\le m-1}[V_{\eta_i}+H(\eta_i)]-H(\eta).
\]
By Theorem \ref{t_comheight} and Proposition \ref{p_stablev}, $V_\sigma\le\Gamma^\star$ for all $\sigma\ne\mathbf1$. Thus, the last display is bounded by
\[
\Gamma^\star+H(\eta_0)-H(\eta)=\Gamma^\star.
\]
This concludes the proof of \eqref{e_etaGamma}.\medskip{}

Finally, we prove $V_\mathbf2=\Gamma^\star$. Theorem \ref{t_comheight} readily implies that $V_\mathbf2\le\Gamma^\star$. If, on the contrary, $V_\mathbf2<\Gamma^\star$ then by definition, there exists $\sigma\in\mathcal{X}$ with $H(\sigma)<H(\mathbf2)$ such that $\Phi(\mathbf2,\sigma)<\Gamma^\star+H(\mathbf2)$, where clearly $\sigma\ne\mathbf1,\mathbf2,\mathbf3$. Then by the claim \eqref{e_etaGamma}, we have $\Phi(\sigma,\mathbf1)<\Gamma^\star+H(\sigma)$. Thus,
\[
\Phi(\mathbf2,\mathbf1)\le\max\{\Phi(\mathbf2,\sigma),\Phi(\sigma,\mathbf1)\}<\Gamma^\star+H(\mathbf2),
\]
which contradicts Theorem \ref{t_comheight}. This conclude the proof of Theorem \ref{t_meta}.
\end{proof}

We have a following another important consequence of Proposition \ref{p_stablev}, which is called the \textit{recurrence property}. With probability tending to one in the limit $\beta\to\infty$, starting from any configuration in $\mathcal X$, the process visits $\mathcal X^s\cup\mathcal X^m$ within a time of order $e^{[2(\ell^\star-1)(\gamma_{23}+\gamma_1)]\beta}$ which is much smaller than the metastable timescale $e^{\Gamma^\star\beta}$.

Given a non-empty subset $\mathcal A \subseteq \mathcal X$ and a configuration $\sigma \in \mathcal X$, we define
\begin{align}\label{firsthittingtime}
\tau_\mathcal{A}^\sigma:= \inf\{t>0:\ X_t^\beta \in \mathcal A\}
\end{align}
which is the \textit{first hitting time} of the subset $\mathcal A$ for the Markov chain $\{X_t^\beta\}_{t \in \mathbb{N}}$ starting from $\sigma$.

\begin{theorem}[Recurrence property]\label{t_rec}
For any $\sigma\in\mathcal X$ and for any $\epsilon>0$, there exists $\kappa>0$ such that for $\beta$ sufficiently large, we have
\[
\mathbb{P}\big[\tau_{\{\mathbf 1,\mathbf 2,\mathbf 3\}}^\sigma>e^{\beta[2(\ell^\star-1)(\gamma_{23}+\gamma_1)+\epsilon]}\big]\le e^{-e^{\kappa\beta}}.
\]
\end{theorem}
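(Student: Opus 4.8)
The plan is to derive this directly from Proposition~\ref{p_stablev} by the standard \emph{recurrence argument} of the pathwise approach, as in \cite{manzo2004essential,olivieri2005large,cirillo2013relaxation} and the analogous statements in \cite{bet2021metastabilitypos,bet2021metastabilityneg}. Write $V^\star:=2(\ell^\star-1)(\gamma_{23}+\gamma_1)$ and $\mathcal A:=\{\mathbf1,\mathbf2,\mathbf3\}$. Proposition~\ref{p_stablev} gives $V_\eta\le V^\star$ for every $\eta\in\mathcal X\setminus\mathcal A$, i.e.\ $\{\eta\in\mathcal X:V_\eta>V^\star\}\subseteq\mathcal A$, and $\mathcal X^s=\{\mathbf1\}\subseteq\mathcal A$ by Proposition~\ref{propstable}; these are precisely the hypotheses under which recurrence to $\mathcal A$ on the time scale $e^{\beta V^\star}$ holds. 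The case $\sigma\in\mathcal A$ reduces at once to the case $\sigma\notin\mathcal A$ (if $X_1^\beta\notin\mathcal A$, apply the estimate below to $X_1^\beta$), so I would assume $\sigma\notin\mathcal A$.

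The building block is the classical cycle-exit estimate: for every $\eta\notin\mathcal X^s$ and every $\epsilon'>0$ there is $\kappa'>0$ such that, for all $\beta$ large and uniformly in $\eta$,
\[
\mathbb P\big[\tau_{\mathcal I_\eta}^\eta>e^{\beta(V_\eta+\epsilon')}\big]\le e^{-e^{\kappa'\beta}}.
\]
This is the one place where the Metropolis form of $P_\beta$ in \eqref{metropolisTP} and reversibility really enter: by the definition $V_\eta=\Phi(\eta,\mathcal I_\eta)-H(\eta)$, in any time window of length $e^{\beta(V_\eta+\epsilon'/2)}$ the chain reaches $\mathcal I_\eta$ with probability bounded below by a positive constant, so over the $\lfloor e^{\beta\epsilon'/2}\rfloor$ disjoint windows of that length contained in $[0,e^{\beta(V_\eta+\epsilon')}]$ the probability of never reaching $\mathcal I_\eta$ is doubly exponentially small; see \cite[Thm.~3.1]{manzo2004essential} and \cite{olivieri2005large,cirillo2013relaxation}.

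Granting this, I would iterate. Fix $\sigma\notin\mathcal A$ and $\epsilon>0$, set $\epsilon':=\epsilon/2$ and $N:=|\mathcal X|$. Applying the estimate to $\sigma$: with probability at least $1-e^{-e^{\kappa'\beta}}$ the chain reaches, within time $e^{\beta(V_\sigma+\epsilon')}\le e^{\beta(V^\star+\epsilon')}$, some $\sigma_1$ with $H(\sigma_1)<H(\sigma)$. If $\sigma_1\in\mathcal A$ stop; otherwise restart from $\sigma_1$ by the strong Markov property and repeat. Along the sequence $\sigma=\sigma_0,\sigma_1,\dots$ produced in this way the energy strictly decreases, so fewer than $N$ distinct terms can appear; since a further strict descent is available from every configuration outside $\mathcal X^s$ and $\mathcal X^s\subseteq\mathcal A$, the sequence can stop only upon entering $\mathcal A$, which thus occurs within $N$ steps. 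Summing the (at most $N$) time increments and union-bounding over the (at most $N$) failure events,
\[
\mathbb P\big[\tau_{\mathcal A}^\sigma>N\,e^{\beta(V^\star+\epsilon/2)}\big]\le N\,e^{-e^{\kappa'\beta}}.
\]
Since $N$ does not depend on $\beta$, for $\beta$ large we have $N\,e^{\beta(V^\star+\epsilon/2)}\le e^{\beta(V^\star+\epsilon)}$ and $N\,e^{-e^{\kappa'\beta}}\le e^{-e^{\kappa\beta}}$ for any fixed $\kappa\in(0,\kappa')$, which is the assertion of the theorem.

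The main obstacle is the uniform, doubly-exponential control in the cycle-exit estimate of the second paragraph: an $o(1)$ bound is elementary, but $e^{-e^{\kappa\beta}}$ requires the sharp large-deviation estimates for exit times from cycles together with uniformity in the starting configuration. The remaining ingredients---the reduction to the target set $\{\mathbf1,\mathbf2,\mathbf3\}$ via Proposition~\ref{p_stablev}, the finite iteration over energy levels, and absorbing the $\beta$-independent constant $N$---are routine bookkeeping and essentially identical to the corresponding arguments in \cite{bet2021metastabilitypos,bet2021metastabilityneg}.
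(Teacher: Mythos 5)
There is a genuine gap, and it sits exactly at the step you flag as the ``building block''. The estimate
\[
\mathbb P\big[\tau_{\mathcal I_\eta}^\eta>e^{\beta(V_\eta+\epsilon')}\big]\le e^{-e^{\kappa'\beta}}
\]
is \emph{not} what \cite[Theorem 3.1]{manzo2004essential} says, and it is false in general -- indeed it is false in this very model. The point is that on its way to $\mathcal I_\eta$ the chain may fall into a deep well that is \emph{not} contained in $\mathcal I_\eta$, and the probability of doing so is only exponentially small in $\beta$, far larger than $e^{-e^{\kappa'\beta}}$. Concretely, take $\eta$ to be a large square droplet of spins $1$ in the sea of spins $2$, large enough that $H(\eta)<H(\mathbf 2)$ (side length of order $2\ell^\star$ suffices); then $\mathbf 2\notin\mathcal I_\eta$ while $V_\eta\le 2\gamma_{12}-\gamma_1$ is small. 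With probability of order $e^{-c\beta}$ (a prescribed sequence of order $\ell^{\star2}$ uphill spin flips, each of probability at least $\tfrac1{3KL}e^{-c'\beta}$) the chain shrinks the droplet completely and reaches $\mathbf 2$ before touching $\mathcal I_\eta$. Since every configuration with energy below $H(\mathbf2)$ lies outside $\mathcal C_2$ (otherwise $V_{\mathbf2}<\Gamma^\star$, contradicting Theorem \ref{t_meta}), from $\mathbf 2$ the chain must climb $\Gamma^\star$ to reach $\mathcal I_\eta$, so conditionally on this event $\tau_{\mathcal I_\eta}^\eta>e^{\beta(V_\eta+\epsilon')}$ with probability at least $\tfrac12$. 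Hence the left-hand side above is at least of order $e^{-C\beta}$, and your per-window ``probability bounded below by a positive constant'' claim fails precisely because at the start of later windows the chain need not be anywhere from which $\mathcal I_\eta$ is reachable over a barrier $V_\eta$.

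The repair is to enlarge the target of each descent step so that it absorbs the deep wells: the correct uniform statement is recurrence to $\{x:V_x>V\}\cup\mathcal X^s$ on the time scale $e^{\beta(V+\epsilon)}$, which is the actual content of \cite[Theorem 3.1]{manzo2004essential} (proved there via cycle decomposition, not by the naive window argument). With $V=2(\ell^\star-1)(\gamma_{23}+\gamma_1)$, Proposition \ref{p_stablev}, Theorem \ref{t_meta} and Proposition \ref{propstable} give $\{x:V_x>V\}\cup\mathcal X^s=\{\mathbf1,\mathbf2,\mathbf3\}$, and the theorem follows in one step -- this is exactly the paper's proof, and once the building block is corrected your iteration over energy levels becomes unnecessary.
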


\begin{proof}
We apply \cite[Theorem 3.1]{manzo2004essential} for the level set with respect to $2(\ell^\star-1)(\gamma_{23}+\gamma_1)$. This concludes the proof since $\mathcal{X}^s=\{\mathbf1\}$ by Proposition \ref{propstable}, $V_{\mathbf2}=V_{\mathbf3}=\Gamma^\star>2(\ell^\star-1)(\gamma_{23}+\gamma_1)$ by Theorem \ref{t_meta} and $V_\eta\le 2(\ell^\star-1)(\gamma_{23}+\gamma_1)$ for all $\eta\in\mathcal{X}\setminus\{\mathbf1,\mathbf2,\mathbf3\}$ by Proposition \ref{p_stablev}.
\end{proof}

% SUBSEC: Metastable transition time, mixing time and spectral gap
\subsection{Metastable transition time, mixing time and spectral gap}\label{Sec3.2}

Our next goal is to study the first hitting time of the stable configuration $\mathbf1$ starting from a metastable configuration $\mathbf{m}\in\mathcal{X}^m=\{\mathbf2,\mathbf3\}$, as well as the mixing time and spectral gap of the stochastic dynamics. For every $\epsilon\in(0,1)$ we define the \textit{mixing time} $t^{mix}_\beta(\epsilon)$ by
\begin{align}\label{mixingtimedef}
t^{mix}_\beta(\epsilon):=\min\big\{n\ge 0:\ \max_{\sigma\in\mathcal X}\|P_\beta^n(\sigma,\cdot)-\mu_\beta\|_{\text{TV}}\le\epsilon\big\},
\end{align}
where $\|\nu-\nu'\|_{\text{TV}}:=\frac 1 2 \sum_{\sigma\in\mathcal X}|\nu(\sigma)-\nu'(\sigma)|$ is the total variation distance between two probability distributions $\nu,\nu'$ on $\mathcal X$. Furthermore, we define the \textit{spectral gap} of the dynamics as
\begin{align}\label{spectralgapdef}
\rho_\beta:=1-\lambda_\beta^{(2)},
\end{align}
where $1=\lambda_\beta^{(1)}>\lambda_\beta^{(2)}\ge\dots\ge\lambda_\beta^{(|\mathcal X|)}\ge-1$ are the eigenvalues of the matrix $P_\beta$.

\begin{theorem}[Metastable transition time, mixing time and spectral gap]\label{t_trans}
For any $\mathbf m\in\{\mathbf2,\mathbf3\}$ the following statements hold.

\begin{enumerate}
\item[\emph{(a)}] For every $\epsilon>0$, $\lim_{\beta\to\infty}\mathbb P(e^{\beta(\Gamma^\star-\epsilon)}<\tau^\mathbf m_{\mathbf1}<e^{\beta(\Gamma^\star+\epsilon)})=1$.
\item[\emph{(b)}] $\lim_{\beta\to\infty}\frac{1}{\beta}\log\mathbb E[\tau^\mathbf m_{\mathbf1}]=\Gamma^\star$.
\item[\emph{(c)}] For every $\epsilon\in(0,1)$, $\lim_{\beta\to\infty}\frac{1}{\beta} \log t^{mix}_\beta(\epsilon)=\Gamma^\star$ and there exist two constants $0<c_1\le c_2<\infty$ independent of $\beta$ such that for any $\beta>0$, $c_1e^{-\beta\Gamma^\star}\le\rho_\beta\le c_2e^{-\beta\Gamma^\star}$.
\end{enumerate}
\end{theorem}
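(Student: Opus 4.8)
The plan is to derive Theorem~\ref{t_trans} from the general framework of the pathwise approach to metastability (as developed in \cite{manzo2004essential,nardi2016hitting,cirillo2015metastability}), using as input the structural facts we have already established: $\mathcal{X}^s=\{\mathbf1\}$ (Proposition~\ref{propstable}), $\mathcal{X}^m=\{\mathbf2,\mathbf3\}$ with $V_{\mathbf2}=V_{\mathbf3}=\Gamma^\star$ (Theorem~\ref{t_meta}), the communication-height identity $\Phi(\mathbf2,\mathbf1)=\Phi(\mathbf3,\mathbf1)=\Phi(\mathbf2,\mathbf3)=H(\mathbf2)+\Gamma^\star$ (Theorem~\ref{t_comheight}), the stability-level bound $V_\eta\le 2(\ell^\star-1)(\gamma_{23}+\gamma_1)<\Gamma^\star$ for all other $\eta$ (Proposition~\ref{p_stablev}), and the recurrence property (Theorem~\ref{t_rec}). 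The subtlety compared with a textbook single-well situation is that $\{\mathbf2,\mathbf3\}$ form a ``connected'' metastable pair at the same height, so one must be careful that the transition $\mathbf m\to\mathbf1$ is governed by $\Gamma^\star$ and not by some cheaper escape; this is exactly what Theorem~\ref{t_comheight} together with Proposition~\ref{p_stablev} rules out.

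\smallskip
\textbf{Step 1 (part (a)): transition time in probability.} I would invoke the standard large-deviation estimates for hitting times in terms of stability levels. The upper bound $\tau^{\mathbf m}_{\mathbf1}<e^{\beta(\Gamma^\star+\epsilon)}$ with probability $\to1$ follows because from $\mathbf m$ the communication height to the strictly-lower configuration $\mathbf1$ is $H(\mathbf m)+\Gamma^\star$ (Theorem~\ref{t_comheight}), so a classical argument (e.g.\ \cite[Theorem 4.1]{manzo2004essential} or the reversibility-based estimate $\mathbb{E}[\tau^{\mathbf m}_{\mathbf1}]\le e^{\beta(\Gamma(\mathbf m,\mathbf1)+\epsilon)}$) gives the bound, after noting that excursions to $\mathbf3$ (resp.\ $\mathbf2$) do not help escape below $H(\mathbf m)$ and each return to $\{\mathbf2,\mathbf3\}$ costs order $e^{\beta\Gamma^\star}$ so only $O(1)$-many such excursions occur before hitting $\mathbf1$. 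For the lower bound $\tau^{\mathbf m}_{\mathbf1}>e^{\beta(\Gamma^\star-\epsilon)}$ w.h.p., I would use that $V_{\mathbf m}=\Gamma^\star$: by the definition of stability level, any path from $\mathbf m$ reaching energy below $H(\mathbf m)$ must climb to height $\ge H(\mathbf m)+\Gamma^\star$; since $\mathbf1$ is the only configuration below $H(\mathbf2)=H(\mathbf3)$ that matters (all other local minima have $V_\eta<\Gamma^\star$ and hence are left quickly by Theorem~\ref{t_rec} without the process being able to ``use'' them as a shortcut), the process cannot reach $\mathbf1$ before an exponential clock of rate $e^{-\beta(\Gamma^\star-\epsilon)}$ rings. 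Concretely this is \cite[Theorem 4.3]{manzo2004essential} applied with the metastable set $\{\mathbf2,\mathbf3\}$.

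\smallskip
\textbf{Step 2 (part (b)): expectation.} The upper bound $\limsup_\beta \frac1\beta\log\mathbb{E}[\tau^{\mathbf m}_{\mathbf1}]\le\Gamma^\star$ comes again from the reversibility/Dirichlet-form estimate $\mathbb{E}_{\mathbf m}[\tau_{\mathbf1}]\le C|\mathcal{X}|\,e^{\beta(\Phi(\mathbf m,\mathbf1)-H(\mathbf m))}=C|\mathcal X|e^{\beta\Gamma^\star}$ for $\beta$ large (a crude bound suffices since we only want logarithmic asymptotics). For the lower bound one combines part (a) with a uniform-integrability / Markov-inequality argument: $\mathbb{E}[\tau^{\mathbf m}_{\mathbf1}]\ge e^{\beta(\Gamma^\star-\epsilon)}\mathbb{P}(\tau^{\mathbf m}_{\mathbf1}>e^{\beta(\Gamma^\star-\epsilon)})\to e^{\beta(\Gamma^\star-\epsilon)}\cdot 1$ on the log scale. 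Since $\epsilon$ is arbitrary, (b) follows.

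\smallskip
\textbf{Step 3 (part (c)): mixing time and spectral gap.} For the spectral gap I would apply the general result (see \cite[Proposition 3.24]{bovier2016metastability} or \cite[Theorem 2.1]{nardi2019tunneling} adapted to this setting) that for a reversible Metropolis chain whose deepest valleys all have depth $\Gamma^\star$ and whose ``second-order'' valleys have strictly smaller depth, $\rho_\beta\asymp e^{-\beta\Gamma^\star}$. The key point is that the relevant Dirichlet-form variational problem is, up to subexponential factors, controlled by $\Phi(\mathbf2,\mathbf1)-H(\mathbf2)=\Phi(\mathbf3,\mathbf1)-H(\mathbf3)=\Gamma^\star$; the lower bound $\rho_\beta\ge c_1 e^{-\beta\Gamma^\star}$ uses a test-function/capacity estimate together with the recurrence property (Theorem~\ref{t_rec}) to handle all $\eta\notin\{\mathbf1,\mathbf2,\mathbf3\}$, and the upper bound $\rho_\beta\le c_2 e^{-\beta\Gamma^\star}$ uses the indicator test function $\mathbf 1_{\{\mathbf2,\mathbf3\}}$ (or $\mathbf 1_{\{\mathbf1\}}$) whose Dirichlet form is of order $e^{-\beta\Gamma^\star}$ while its variance is $\Theta(1)$. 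Then the mixing-time statement $\lim_\beta\frac1\beta\log t^{mix}_\beta(\epsilon)=\Gamma^\star$ follows from the standard two-sided relation between mixing time and spectral gap for reversible chains, $ (\rho_\beta^{-1}-1)\log\frac{1}{2\epsilon}\le t^{mix}_\beta(\epsilon)\le \rho_\beta^{-1}\log\frac{1}{\epsilon\,\sqrt{\min_\sigma\mu_\beta(\sigma)}}$, after checking that $\log(\min_\sigma\mu_\beta(\sigma))^{-1}=O(\beta)$ is negligible on the exponential scale.

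\smallskip
\textbf{Main obstacle.} The genuine difficulty is not in any single invocation above but in verifying that the abstract hypotheses of the pathwise-approach theorems are met given that our metastable set is the \emph{connected pair} $\{\mathbf2,\mathbf3\}$ rather than a single well: one has to show that the process started at $\mathbf m$ truly reaches $\mathbf1$ on timescale $e^{\beta\Gamma^\star}$ and not faster, \emph{despite} being able to wander freely between $\mathbf2$ and $\mathbf3$. This is where Theorem~\ref{t_comheight} (all three barriers equal $\Gamma^\star$) is essential — it guarantees the excursions $\mathbf2\leftrightarrow\mathbf3$ neither speed up nor slow down the escape on the exponential scale — and where Proposition~\ref{p_stablev} plus Theorem~\ref{t_rec} are essential to exclude any cheaper intermediate route. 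Making the ``$O(1)$ many excursions before escape'' heuristic rigorous (e.g.\ via a renewal decomposition of $\tau^{\mathbf m}_{\mathbf1}$ over successive returns to $\{\mathbf2,\mathbf3\}$) is the one place where some care, rather than a direct citation, is required.
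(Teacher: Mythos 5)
For items (a) and (b) your plan is essentially the paper's proof: both statements are obtained by invoking the model-independent pathwise results of \cite{manzo2004essential} (the paper cites Theorem 4.1 and Theorem 4.9 there, applied with $\eta_0=\mathbf m$ and $\Gamma=\Gamma^\star$), and the model-dependent inputs you list (Theorem \ref{t_comheight}, Theorem \ref{t_meta}, Proposition \ref{p_stablev}, Theorem \ref{t_rec}) are exactly what is needed; your Markov-inequality derivation of (b) from (a) plus a crude reversibility upper bound is a harmless variant of citing Theorem 4.9.

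Part (c), however, contains a genuine gap. The test functions you propose for the upper bound on the spectral gap do not work: for $f=\mathbbm{1}_{\{\mathbf2,\mathbf3\}}$ the variance is not $\Theta(1)$ but of order $e^{-\beta(H(\mathbf2)-H(\mathbf1))}$, while the Dirichlet form is governed by the cheapest single spin flip out of $\mathbf 2$ (a single spin $3$ in the sea of $2$'s, cost $4\gamma_{23}$), so the variational principle only gives $\rho_\beta\le Ce^{-4\beta\gamma_{23}}$, which is far weaker than $e^{-\beta\Gamma^\star}$ since $\Gamma^\star=2(K+1)\gamma_{23}$; the indicator $\mathbbm{1}_{\{\mathbf1\}}$ fails for the same reason. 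The correct test function is the indicator of the whole metastable valley, e.g.\ $\mathcal{C}_2\cup\mathcal{C}_3$ from \eqref{e_Cr}: every transition leaving that set passes over energy at least $H(\mathbf2)+\Gamma^\star$, so the Dirichlet form is of order $e^{-\beta(H(\mathbf2)+\Gamma^\star-H(\mathbf1))}$ while the variance is of order $e^{-\beta(H(\mathbf2)-H(\mathbf1))}$, giving the claimed $e^{-\beta\Gamma^\star}$. Moreover, your lower bound on the gap is only gestured at; the quantity controlling both the gap and the mixing time is $\max_{\eta\in\mathcal{X}\setminus\{\mathbf1\}}\Gamma(\eta,\mathbf1)$, and the model-dependent fact to verify is that this maximum equals $\Gamma^\star$, i.e.\ $\Gamma(\eta,\mathbf1)<\Gamma^\star$ for every $\eta\ne\mathbf1,\mathbf2,\mathbf3$, which is the claim \eqref{e_etaGamma} proved inside Theorem \ref{t_meta} by concatenating optimal paths along a chain of decreasing energies. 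Knowing only $V_\eta<\Gamma^\star$ for such $\eta$ (Proposition \ref{p_stablev}) is not yet this statement and must be upgraded by that argument. The paper does exactly this and then concludes (c) in one stroke by citing \cite[Proposition 3.24 and Lemma 3.6]{nardi2016hitting}; your step from two-sided gap bounds to the mixing-time asymptotics is fine once correct gap estimates are in hand.
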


\begin{proof}
Item (a) follows by \cite[Theorem 4.1]{manzo2004essential}, while item (b) follows by \cite[Theorem 4.9]{manzo2004essential}. In both cases we applied the model-independent results with $\eta_0=\mathbf m$ and $\Gamma=\Gamma^*$. To prove item (c), by \cite[Proposition 3.24]{nardi2016hitting}, it suffices to demonstrate in our model that $\widetilde\Gamma(\mathcal{X}\setminus\{\mathbf1\})=\Gamma^\star$ (see equation (21) in \cite{nardi2016hitting} for the definition of $\widetilde\Gamma$). Indeed, by \cite[Lemma 3.6]{nardi2016hitting} we know that
\[
\widetilde\Gamma(\mathcal{X}\setminus\{\mathbf1\})=\max_{\eta\in\mathcal{X}\setminus\{\mathbf1\}}\Gamma(\eta,\mathbf1).
\]
By Theorem \ref{t_comheight}, $\Gamma(\mathbf2,\mathbf1)=\Gamma(\mathbf3,\mathbf1)=\Gamma^\star$. Moreover, by the claim \eqref{e_etaGamma} we have that $\Gamma(\eta,\mathbf1)<\Gamma^\star$ for all $\eta\ne\mathbf1,\mathbf2,\mathbf3$. This concludes the proof.
\end{proof}

% SUBSEC: Minimal gates
\subsection{Minimal gates}\label{Sec3.3}

Next we are interested in identifying the set of minimal gates for the metastable transitions. First, we need a few more model-independent definitions.\medskip{}

The set
\begin{align}\label{saddles}
\mathcal S(\mathcal A,\mathcal B):=\big\{\xi\in\mathcal X:\ \exists\omega\in\Omega_{\mathcal A,\mathcal B}^{opt},\; \xi\in\mathrm{argmax}_\omega H\big\}.
\end{align}
is known as the set of \textit{minimal saddles} between $\mathcal A,\mathcal B\subseteq\mathcal X$. In particular, any $\xi\in\mathcal S(\mathcal A,\mathcal B)$ is called an \textit{essential saddle} if there exists $\omega\in\Omega_{\mathcal A,\mathcal B}^{opt}$ such that $\xi\in\mathrm{argmax}_\omega H$ and
\[
\mathrm{argmax}_{\omega'} H\not\subseteq \mathrm{argmax}_\omega H \setminus \{\xi\}\;\;\;\;\text{for all }\omega'\in\Omega_{\mathcal A,\mathcal B}^{opt}\setminus\{\omega\}
\]
A saddle $\xi\in\mathcal S(\mathcal A,\mathcal B)$ which does not satisfy the condition is said to be \textit{unessential}. One can easily check that this definition coincides with the classical one \cite{manzo2004essential} but is simpler.\medskip{}

A collection $\mathcal W$ of configurations is a \textit{gate} for the transition between $\mathcal{A},\mathcal{B}\in\mathcal X$ if $\mathcal W\subseteq\mathcal S(\mathcal{A},\mathcal{B})$ and $\omega\cap\mathcal W\neq\varnothing$ for all $\omega\in\Omega_{\mathcal{A},\mathcal{B}}^{opt}$. Moreover, $\mathcal{W}$ is said to be a \textit{minimal gate} for the transition $\mathcal A\to\mathcal B$ if it is a gate and for any $\mathcal W'\subset\mathcal W$ there exists $\omega'\in\Omega_{\mathcal A,\mathcal B}^{opt}$ such that $\omega'\cap\mathcal W'=\varnothing$. The set $\mathcal{G}=\mathcal{G}(\mathcal A,\mathcal B)$ denotes the union of all minimal gates for the transition $\mathcal A\to\mathcal B$.\medskip{}

Let us now focus on some model-dependent definitions concerning our setting. We refer to Figures \ref{fig2} and \ref{fig3} for illustrations.

\begin{figure}%[!ht]
\centering
\begin{tikzpicture}[scale=0.65,transform shape]

\fill[black!28!white]
(1.8,0) rectangle (2.1,2.7)(2.1,0.6) rectangle (2.4,1.8);
\draw[step=0.3cm,color=black] (0,0) grid (3.6,2.7);
\draw (1.8,-0.1) node[below] {{(a)}};
\end{tikzpicture}\ \ \ \ \ \ 
\begin{tikzpicture}[scale=0.65,transform shape]
\fill[black!28!white] 
(0,0) rectangle (0.6,2.4);
\draw[step=0.3cm,color=black] (0,0) grid (3.6,2.7);
\draw (1.8,-0.1) node[below] {{(b)}};
\end{tikzpicture}\ \ \ \ \ \ 
\begin{tikzpicture}[scale=0.65,transform shape]
\fill[black!28!white] 
(0.3,0) rectangle (0.6,2.7)(0.6,0) rectangle (0.9,2.4);
\draw[step=0.3cm,color=black] (0,0) grid (3.6,2.7);
\draw (1.8,-0.1) node[below] {{(c)}};
\end{tikzpicture}\ \ \ \ \ \ 
\begin{tikzpicture}[scale=0.65,transform shape]
\fill[black!28!white] 
(0.3,0) rectangle (2.7,2.7)(2.7,0.9) rectangle (3,2.4);
\draw[step=0.3cm,color=black] (0,0) grid (3.6,2.7);
\draw (1.8,-0.1) node[below] {{(d)}};
\end{tikzpicture}

\caption{\label{fig2}Examples of configurations on a grid graph $12\times9$ belonging to (a) $B_{1,K}^4(r,s)\subset\mathscr H_4(\mathbf r,\mathbf s)$, (b) $R_{2,K-1}(r,s)\subset\mathscr Q(\mathbf r,\mathbf s)$, (c) $B_{1,K}^{K-1}(r,s)\subset\mathscr P(\mathbf r,\mathbf s)$ and (d) $B^5_{8,K}(r,s)\subset\mathscr W^5_8(\mathbf r,\mathbf s)$. Spins $r$ and $s$ are represented by colors white and gray, respectively.}
\end{figure}
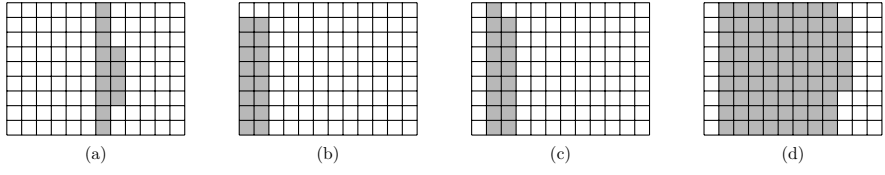

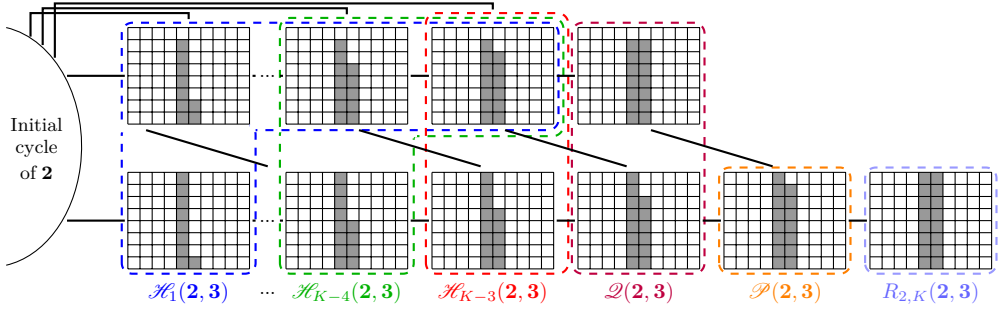
\begin{figure}%[!ht]
\centering
\begin{tikzpicture}[scale = 0.8, transform shape]
% Rounded rectangle
\draw[rounded corners,dashed,green!70!black,thick] (2.5,-0.08)rectangle(4.7,2.4);
\fill[white] (4.6,2.4)rectangle(4.8,2.3)(2.4,2.3)rectangle(2.6,2.4)(2.55,1.8)rectangle(4.6,2.5);
\draw[white,ultra thick](2.4,2.3)--(4.6,2.3)(-0.1,2.4)--(2,2.4)(2.4,2.5)--(4.6,2.5)(2.1,2.3)--(2.1,4.2)(2.55,2.2)--(4.6,2.2)(2.55,2.2)--(4.7,2.2);
\draw[rounded corners,dashed,green!70!black,thick](2.5,2.2)rectangle(7.16,4.16);
\draw[white,ultra thick](-0.1,2.3)--(2,2.3)(2.55,2.2)--(4.7,2.2);
\fill[white](2.55,2.2)rectangle(4.75,2.45);

\draw[rounded corners,dashed,blue,thick] (-0.1,-0.08)rectangle(2.1,2.4);
\fill[white] (2,2.4)rectangle(2.2,2.3)(0,2.4)rectangle(-0.2,2.3);
\draw[white,ultra thick](-0.1,2.3)--(2,2.3)(-0.1,2.4)--(2,2.4)(2.1,2.3)--(2.1,4.2);
\draw[rounded corners,dashed,blue,thick](-0.1,2.3)rectangle(7.08,4.08);
\draw[white,ultra thick](-0.15,2.3)--(2.1,2.3);
\fill[white] (-0.1,2.3)rectangle(0,2.4);

\draw[rounded corners,dashed,red,thick] (4.9,-0.08)rectangle(7.24,4.24);

\draw[rounded corners,dashed,purple,thick] (7.3,-0.08)rectangle(9.48,4.08);

\draw[rounded corners,dashed,orange,thick] (9.72,-0.08)rectangle(11.88,1.68);

\draw[rounded corners,dashed,blue!40!white,thick] (12.12,-0.08)rectangle(14.28,1.68);
% PDown
\fill[black!40!white] (0.8,0)rectangle(1,1.6)(1,0)rectangle(1.2,0.2);
\draw[step=0.2cm,color=black] (0,0) grid (2,1.6);
\draw[black!100!white,thick] (-1,0.8)--(-0.05,0.8)(2.05,0.8)--(2.1,0.8)(2.5,0.8)--(2.55,0.8);
\draw (2.3,0.8) node {\small{$...$}};
\draw (1,-0.1) node[below] {\textcolor{blue}{$\mathscr H_1(\mathbf 2,\mathbf 3)$}};

\fill[black!40!white] (3.4,0)rectangle(3.6,1.6)(3.6,0)rectangle(3.8,0.8);
\draw[step=0.2cm,color=black] (2.6,0) grid (4.6,1.6);
\draw[black!100!white,thick] (4.65,0.8)--(4.95,0.8);
\draw (2.6,0)--(2.6,1.6);
\draw (3.6,-0.1) node[below] {\textcolor{green!70!black}{$\mathscr H_{K-4}(\mathbf 2,\mathbf 3)$}};

\fill[black!40!white] (5.8,0)rectangle(6,1.6)(6,0)rectangle(6.2,1);
\draw[step=0.2cm,color=black] (5,0) grid (7,1.6);
\draw[black!100!white,thick] (7.05,0.8)--(7.35,0.8);
\draw (5,0)--(5,1.6);
\draw (6,-0.1) node[below] {\textcolor{red}{$\mathscr H_{K-3}(\mathbf 2,\mathbf 3)$}};

\fill[black!40!white] (8.2,0)rectangle(8.4,1.6)(8.4,0)rectangle(8.6,1.2);
\draw[step=0.2cm,color=black] (7.4,0) grid (9.4,1.6);
\draw[black!100!white,thick] (9.45,0.8)--(9.75,0.8);
\draw (7.4,0)--(7.4,1.6);
\draw (8.4,-0.1) node[below] {\textcolor{purple}{$\mathscr Q(\mathbf 2,\mathbf 3)$}};

\fill[black!40!white] (10.6,0)rectangle(10.8,1.6)(10.8,0)rectangle(11,1.4);
\draw[step=0.2cm,color=black] (9.8,0) grid (11.8,1.6);
\draw[black!100!white,thick] (11.85,0.8)--(12.15,0.8);
\draw (9.8,0)--(9.8,1.6);
\draw (10.8,-0.1) node[below] {\textcolor{orange}{$\mathscr P(\mathbf 2,\mathbf 3)$}};

\fill[black!40!white] (13,0)rectangle(13.2,1.6)(13.2,0)rectangle(13.4,1.6);
\draw[step=0.2cm,color=black] (12.2,0) grid (14.2,1.6);
\draw (12.2,0)--(12.2,1.6);
\draw (13.2,-0.1) node[below] {\textcolor{blue!60!white}{$R_{2,K}(\mathbf 2,\mathbf 3)$}};

% Up
\fill[black!40!white](0.8,2.4)rectangle(1,3.8)(1,2.4)rectangle(1.2,2.8);
\draw[step=0.2cm,color=black] (0,2.4) grid (2,4);
\draw[black!100!white,thick] (-1,3.2)--(-0.05,3.2)(2.05,3.2)--(2.1,3.2)(2.5,3.2)--(2.55,3.2);
\draw (0,2.4)--(2,2.4);
\draw (2.3,3.2) node {{\small{$...$}}};
\draw (2.3,-0.25) node[below] {{\small{$...$}}};

\fill[black!40!white] (3.4,2.4)rectangle(3.6,3.8)(3.6,2.4)rectangle(3.8,3.4);
\draw[step=0.2cm,color=black] (2.6,2.4) grid (4.6,4);
\draw[black!100!white,thick] (4.65,3.2)--(4.95,3.2);
\draw (2.6,2.4)--(2.6,4)(2.6,2.4)--(4.6,2.4);

\fill[black!40!white] (5.8,2.4)rectangle(6,3.8)(6,2.4)rectangle(6.2,3.6);
\draw[step=0.2cm,color=black] (5,2.4) grid (7,4);
\draw[black!100!white,thick] (7.05,3.2)--(7.35,3.2);
\draw (5,2.4)--(7,2.4)(5,2.4)--(5,4);

\fill[black!40!white] (8.2,2.4)rectangle(8.4,3.8)(8.4,2.4)rectangle(8.6,3.8);
\draw[step=0.2cm,color=black] (7.4,2.4) grid (9.4,4);
\draw (7.4,2.4)--(7.4,4)(7.4,2.4)--(9.4,2.4);

% Between
\draw[black!100!white,thick] (0.3,2.3)--(2.3,1.7)(3.8,2.3)--(5.8,1.7)(6.2,2.3)--(8.2,1.7)(8.6,2.3)--(10.6,1.7);

% Cycle
\draw (-2,4) arc [start angle=80, end angle=-80, x radius=1.5cm, y radius=2cm];
\draw (-1.5,2.4) node {\small{Initial}};
\draw (-1.5,2) node {\small{cycle}};
\draw (-1.5,1.6) node {\small{of\hspace{2.5pt}$\mathbf 2$}};

\draw[black!100!white,thick] (6,4.3)--(6,4.4)--(-1.2,4.4)--(-1.2,3.5);
\draw[black!100!white,thick] (3.6,4.22)--(3.6,4.32)--(-1.4,4.32)--(-1.4,3.71);
\draw[black!100!white,thick] (1,4.14)--(1,4.24)--(-1.6,4.24)--(-1.6,3.85);

\end{tikzpicture}
\caption{\label{fig3}Local geometry of the configurations belonging to $\mathscr{P}(\mathbf 2,\mathbf 3)$, $\mathscr{Q}(\mathbf 2,\mathbf 3)$ and $\mathscr{H}_i(\mathbf 2,\mathbf 3)$, $1\le i\le K-3$, where $K=8$ and $L=10$. We refer to \eqref{e_Cr} for the definition of initial cycles.}
\end{figure}

\begin{itemize}
\item We say that $R\subseteq V$ is a \textit{rectangle of shape $a\times b$} if the sites in $R$ form a rectangle with $a$ columns and $b$ rows. It is a \textit{strip} if it wraps around $\Lambda$, i.e., if $a=L$ or $b=K$.
\item For $r,s\in S$, we denote by $R_{a,b}(r,s)$ the collection of configurations in which all vertices have spins $r$, except for those in a rectangle $a\times b$, which have spins $s$. Note that $R_{a,b}(r,s)\ne R_{b,a}(r,s)$ if $a\ne b$. Moreover, we write $B_{a,b}^l(r,s)$ (resp. $\hat{B}_{a,b}^l(r,s)$) the collection of configurations in which all vertices have spins $r$, except for those which have spins $s$, in a rectangle $a\times b$ with a bar of length $l$ adjacent to one of the sides of length $b$ (resp. $a$), with $1\le l\le b-1$ (resp. $1\le l\le a-1$).
\item We set
\[
\mathscr{P}(\mathbf r,\mathbf s):=\begin{cases}
B_{1,K}^{K-1}(r,s),&\text{if }K<L,\\
B_{1,K}^{K-1}(r,s)\cup \hat{B}_{K,1}^{K-1}(r,s),&\text{if }K=L.
\end{cases}
\]
\item We define
\[
\mathscr{Q}(\mathbf r,\mathbf s):=\begin{cases}
R_{2,K-1}(r,s)\cup B_{1,K}^{K-2}(r,s),&\text{if }K<L,\\
R_{2,K-1}(r,s)\cup B_{1,K}^{K-2}(r,s)\cup R_{K-1,2}(r,s)\cup \hat{B}_{K,1}^{K-2}(r,s),&\text{if }K=L.
\end{cases}
\]
\item For $1\le i\le K-3$, we define $\mathscr{H}_i(\mathbf r,\mathbf s)$ as
\[
\begin{cases}
B_{1,K}^i(r,s)\cup\bigcup_{j=i+1}^{K-2}B_{1,K-1}^j(r,s),&\text{if }K<L,\\
B_{1,K}^i(r,s)\cup\bigcup_{j=i+1}^{K-2}B_{1,K-1}^j(r,s)\cup \hat{B}_{K,1}^i(r,s)\cup \bigcup_{j=i+1}^{K-2}\hat{B}_{K-1,1}^j(r,s),&\text{if }K=L.
\end{cases}
\]
\item Finally, for $2\le j\le L-3$ and $1\le h\le K-1$ we set
\[
\mathscr{W}_j^h(\mathbf r,\mathbf s):=\begin{cases}
B_{j,K}^h(r,s),&\text{if }K<L,\\
B_{j,K}^h(r,s)\cup \hat{B}_{K,j}^h(r,s),&\text{if }K=L.
\end{cases}
\]
\end{itemize}

Using the sets defined above, we now formulate all the possible minimal gates for the metastable transitions. For $m\in\{2,3\}$, we write
\begin{align}\label{e_Wm1}
\mathcal{W}(\mathbf{m},\mathbf1):=
B_{\ell^\star-1,\ell^\star}^1(m,1)\cup \hat{B}_{\ell^\star,\ell^\star-1}^1(m,1).
\end{align}
Moreover, we abbreviate
\begin{align}\label{e_Wmm'}
\mathcal{W}(\mathbf2,\mathbf3):=&\;\bigcup_{i=1}^{K-3}\mathscr{H}_i(\mathbf2,\mathbf3)\cup\mathscr{Q}(\mathbf2,\mathbf3)\cup\mathscr{P}(\mathbf2,\mathbf3)\nonumber\\
&\;\cup\bigcup_{j=2}^{L-3}\bigcup_{h=1}^{K-1}\mathscr{W}_j^h(\mathbf2,\mathbf3)\cup\mathscr{P}(\mathbf3,\mathbf2)\cup\mathscr{Q}(\mathbf3,\mathbf2)\cup\bigcup_{i=1}^{K-3}\mathscr{H}_i(\mathbf3,\mathbf2).
\end{align}

First, we address the metastable transition from $\mathbf{m}\in\{\mathbf2,\mathbf3\}$ to $\mathbf{1}$. We refer to Figure \ref{fig4} for a viewpoint from above the energy landscape.

\begin{figure}%[]
\centering
\begin{tikzpicture}[scale=1.2,transform shape]
\fill[red!20!white] (-1.55,2.34) rectangle (1.55,2.59);
\draw[pattern=vertical lines, pattern color=black] (-1.55,2.34) rectangle (1.55,2.59);
\draw[decorate,decoration=brace]  (-1.6,3.6) -- (1.6,3.6);
\draw (0,3.65) node[above] {\small{$\mathcal W(\mathbf 2,\mathbf 3)$}};
\draw (-1.5,2.7)--(-1.5,2.8)(-1.2,2.7)--(-1.2,2.8)(-1.50,2.8) -- (-1.20,2.8)(-1.35,2.8)--(-1.35,2.85);
\draw[<-] (-1.7,2.85) -- (-1.35,2.85);
\draw (-1.6,2.8) node[above,left] {\tiny{$\bigcup_i\mathscr H_i(\mathbf 2,\mathbf 3)$}};

\draw (-1.17,2.7)--(-1.17,2.8)(-1.03,2.7)--(-1.03,2.8)(-1.17,2.8)--(-1.03,2.8);
\draw(-1.10,2.8)--(-1.1,3.05);
\draw[<-](-1.7,3.05)--(-1.1,3.05);
\draw (-1.6,3.05) node[above,left] {\tiny{$\mathscr Q(\mathbf 2,\mathbf 3)$}};

\draw (-1,2.7)--(-1,2.8)(-0.86,2.7)--(-0.86,2.8)(-1,2.8)--(-0.86,2.8);
\draw[->](-0.93,2.8)--(-0.93,3.3);
\draw (-0.93,3.2) node[above] {\tiny{$\mathscr P(\mathbf 2,\mathbf 3)$}};

\draw (-0.83,2.7)--(-0.83,2.8)(0.83,2.7)--(0.83,2.8)(-0.83,2.8)--(0.83,2.8);
\draw (0,2.75) node[above] {\tiny{$\bigcup_{j,h}\mathcal W_j^h(\mathbf 2,\mathbf 3)$}};

\draw (1.5,2.7)--(1.5,2.8)(1.2,2.7)--(1.2,2.8)(1.50,2.8) -- (1.20,2.8)(1.35,2.8)--(1.35,2.85);
\draw[<-] (1.7,2.85) -- (1.35,2.85);
\draw (1.6,2.8) node[above,right] {\tiny{$\bigcup_i\mathscr H_i(\mathbf 3,\mathbf 2)$}};

\draw (1.17,2.7)--(1.17,2.8)(1.03,2.7)--(1.03,2.8)(1.17,2.8)--(1.03,2.8);
\draw(1.10,2.8)--(1.1,3.05);
\draw[->](1.10,3.05)--(1.7,3.05);
\draw (1.6,3.05) node[above,right] {\tiny{$\mathscr Q(\mathbf 3,\mathbf 2)$}};

\draw (1,2.7)--(1,2.8)(0.86,2.7)--(0.86,2.8)(1,2.8)--(0.86,2.8);
\draw[->](0.93,2.8)--(0.93,3.3);
\draw (0.93,3.2) node[above] {\tiny{$\mathscr P(\mathbf 3,\mathbf 2)$}};

\draw[thick] (0,0.5) circle (1cm);
\fill[black!40!white] (0,0.5) circle (1cm);
\draw[thick] (1.7,1.6) circle (1cm);
\fill[black!20!white] (1.7,1.6) circle (1cm);
\draw[thick] (-1.7,1.6) circle (1cm);
\fill[black!20!white] (-1.7,1.6) circle (1cm);

\fill  (0,0.5) circle (1.2pt);
\draw (0,0.5) node[below] {\small{$\mathbf 1$}};
\draw (0.2,0.6) node[right] {\large{$\mathcal{C}_1$}};
\fill (1.7,1.6) circle (1.2pt);
\draw (1.7,1.6) node[below] {\small{$\mathbf 3$}};
\draw (1.9,1.7) node[right] {\large{$\mathcal{C}_3$}};
\fill (-1.7,1.6) circle (1.2pt);
\draw (-1.7,1.6) node[below] {\small{$\mathbf 2$}};
\draw (-1.9,1.7) node[left] {\large{$\mathcal{C}_2$}};

\draw[->,thin] (0.85,1.04) --(1.5,-0.1);
\draw (1.45,-0.1) node[right] {\small{$\mathcal W(\mathbf 3,\mathbf 1)$}};
\draw[->,thin] (-0.85,1.04) --(-1.5,-0.1);
\draw (-1.45,-0.1) node[left] {\small{$\mathcal W(\mathbf 2,\mathbf 1)$}};
\draw[thick] (0.85,1.04) circle (2.5pt);
\fill[red!20!white] (0.85,1.04) circle (2.5pt);
\draw[thick] (-0.85,1.04) circle (2.5pt);
\fill[red!20!white] (-0.85,1.04) circle (2.5pt);
\end{tikzpicture}
\caption{\label{fig4}Viewpoint from above of the energy landscape cut at the energy level $\Phi(\mathbf 2,\mathbf 1)=\Phi(\mathbf 3,\mathbf 1)=\Phi(\mathbf 2,\mathbf 3)=\Gamma^\star+H(\mathbf2)$, which explains Figure \ref{fig1}(e) in more detail. Here, $\mathcal{C}_r$ denotes the initial cycle of $\mathbf{r}$; see \eqref{e_Cr} for the exact definition. The gates $\mathcal{W}(\mathbf m,\mathbf1)$ between $\mathbf m\in\{\mathbf2,\mathbf3\}$ and $\mathbf1$ consist of singletons, whereas the gate $\mathcal{W}(\mathbf 2,\mathbf3)$ between $\mathbf2$ and $\mathbf3$ is a complex union of essential saddles.}
\end{figure}
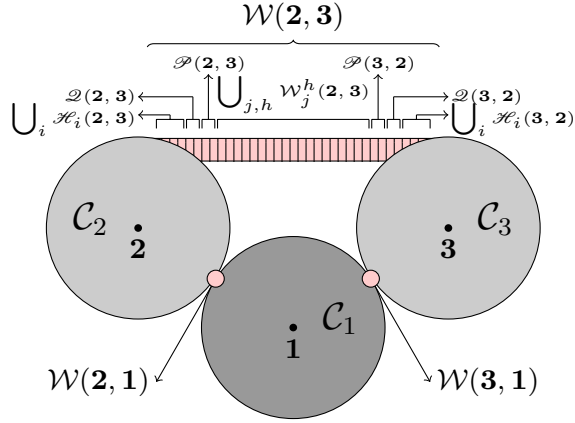%\FloatBarrier

\begin{theorem}[Minimal gates for $\mathbf2\to\mathbf1$ and $\mathbf3\to\mathbf1$]\label{t_gate1}
Fix $\mathbf{m}\in\{\mathbf2,\mathbf3\}$ and consider the metastable transition from $\mathbf{m}$ to $\mathbf1$. Take any set $\mathscr{A}$ such that
\[
\mathscr{A}\in\big\{\mathscr{W}_j^h(\mathbf2,\mathbf3)\big\}_{j,h}\cup\big\{\mathscr{Q}(\mathbf2,\mathbf3),\mathscr{P}(\mathbf2,\mathbf3),\mathscr{P}(\mathbf3,\mathbf2),\mathscr{Q}(\mathbf3,\mathbf2)\big\}\cup\big\{\mathscr{H}_i(\mathbf2,\mathbf3)\big\}_i\cup\big\{\mathscr{H}_i(\mathbf3,\mathbf2)\big\}_i,
\]
where the collections are over all $2\le j\le L-3$, $1\le h\le K-1$ and $1\le i\le K-3$. Then,
\begin{enumerate}
\item[\emph{(a)}] $\mathcal{W}(\mathbf2,\mathbf1)\cup\mathcal{W}(\mathbf3,\mathbf1)$ is a minimal gate;
\item[\emph{(b)}] $\mathcal{W}(\mathbf{m},\mathbf1)\cup\mathscr{A}$ is a minimal gate;
\item[\emph{(c)}] moreover, these are all the minimal gates in the sense that
\begin{align*}
\mathcal{G}(\mathbf2,\mathbf1)=\mathcal{G}(\mathbf3,\mathbf1)=\mathcal{W}(\mathbf 2,\mathbf1)\cup\mathcal{W}(\mathbf 3,\mathbf1)\cup\mathcal{W}(\mathbf{2},\mathbf{3}).
\end{align*}
\end{enumerate}
\end{theorem}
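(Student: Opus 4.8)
The plan is to reduce the study of $\Omega^{opt}_{\mathbf m,\mathbf1}$ to three elementary crossings and then to glue the corresponding gate structures; throughout, $\mathbf m'$ denotes the metastable configuration other than $\mathbf m$, and I use freely that $H(\mathbf2)=H(\mathbf3)$, the $2\leftrightarrow3$ symmetry of $H$, and the fact (from \eqref{e_Wmm'}) that $\mathcal W(\mathbf2,\mathbf3)=\mathcal W(\mathbf3,\mathbf2)$ as a set. First I would establish the coarse-grained geometry behind Figure \ref{fig4}: by Theorem \ref{t_comheight}, Theorem \ref{t_meta}, Proposition \ref{p_stablev} and the recurrence property (Theorem \ref{t_rec}), the only cycles of depth of order $\Gamma^\star$ are the three initial cycles $\mathcal C_1,\mathcal C_2,\mathcal C_3$ (cf.\ \eqref{e_Cr}), while below the common level $H(\mathbf2)+\Gamma^\star=\Phi(\mathbf2,\mathbf1)=\Phi(\mathbf3,\mathbf1)=\Phi(\mathbf2,\mathbf3)$ everything flows into one of them. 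Since any $\eta$ with $\Phi(\eta,\mathbf r)<H(\mathbf2)+\Gamma^\star$ satisfies $H(\eta)<H(\mathbf2)+\Gamma^\star$, one then proves the structural lemma: for every $\omega\in\Omega^{opt}_{\mathbf m,\mathbf1}$, the portions of $\omega$ strictly below level $H(\mathbf2)+\Gamma^\star$ split into sub-paths, each lying entirely in one of $\mathcal C_1,\mathcal C_2,\mathcal C_3$, and two consecutive sub-paths lying in $\mathcal C_r$ and $\mathcal C_s$ are joined by a crossing that visits the set $\mathcal S_{rs}$ of communication saddles between $\mathcal C_r$ and $\mathcal C_s$; in particular every configuration of $\mathrm{argmax}_\omega H$ is an inter-cycle crossing configuration.

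The crux is then to show that $\mathcal S_{21}=\mathcal W(\mathbf2,\mathbf1)$, $\mathcal S_{31}=\mathcal W(\mathbf3,\mathbf1)$ and $\mathcal S_{23}=\mathcal W(\mathbf2,\mathbf3)$, each crossing being realisable by an optimal path that uses only the two colours involved, and each such saddle being attained as the unique maximum of some optimal crossing path. For the two ``direct'' crossings $\mathcal S_{m1}$ this is the classical Ising droplet analysis (the critical droplet being an $\ell^\star\times(\ell^\star-1)$ quasi-square with a unit protuberance on a long side), recalled in Section \ref{Sec5} and transported to our setting by the projection operator $\mathscr P^{m'1}$ of Section \ref{Sec4}, which never increases the energy and sends paths to paths. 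For the crossing $\mathcal S_{23}$ the relevant landscape is the two-colour zero-field model on $\{2,3\}$, whose much more intricate strip/protuberance gate structure is encoded by the sets $\mathscr P,\mathscr Q,\mathscr H_i,\mathscr W_j^h$; this is precisely Theorem \ref{t_gate2} (see also \cite{nardi2019tunneling,bet2021critical}), and here the projections $\mathscr P^{1r}$, $r\in\{2,3\}$, are the essential device for discarding the spurious spins $1$ that may appear during a $\mathbf2\leftrightarrow\mathbf3$ excursion. The hard part will be precisely this step: it absorbs the full geometric Ising analysis and, above all, the delicate plateau analysis of the $\mathbf2\leftrightarrow\mathbf3$ transition together with the careful control of the third colour flagged in the introduction.

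Given the previous two steps, each $\omega\in\Omega^{opt}_{\mathbf m,\mathbf1}$ induces a walk on the auxiliary multigraph with vertices $\{1,2,3\}$ and edges $\{1,2\},\{1,3\},\{2,3\}$, starting at $m$ and ending at $1$, in which a traversal of edge $\{r,s\}$ forces $\omega$ to visit $\mathcal W(\mathbf r,\mathbf s)$. Since the final edge of the walk enters vertex $1$, every $\omega$ visits $\mathcal W(\mathbf2,\mathbf1)\cup\mathcal W(\mathbf3,\mathbf1)$, which gives the gate claim in (a); and if $\omega$ never uses the edge $\{m,1\}$ then it must use the edge $\{2,3\}$, hence it crosses the $\mathbf2\leftrightarrow\mathbf3$ plateau. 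Since $\mathscr A$ is a gate for the $\mathbf2\leftrightarrow\mathbf3$ transition (by Theorem \ref{t_gate2}; to apply this to a crossing sub-path of $\omega$ one first prepends and appends descending paths inside $\mathcal C_2$ and $\mathcal C_3$, below level $H(\mathbf2)+\Gamma^\star$, turning it into a full optimal path between $\mathbf2$ and $\mathbf3$), the path $\omega$ visits $\mathscr A$, giving the gate claim in (b). Both $\mathcal W(\mathbf2,\mathbf1)\cup\mathcal W(\mathbf3,\mathbf1)$ and $\mathcal W(\mathbf m,\mathbf1)\cup\mathscr A$ are contained in $\mathcal S(\mathbf m,\mathbf1)$ since all their configurations sit at height $H(\mathbf2)+\Gamma^\star$ on optimal paths.

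It remains to prove minimality and exhaustivity. For $\mathcal W(\mathbf2,\mathbf1)\cup\mathcal W(\mathbf3,\mathbf1)$: a proper subset misses some $\xi$ in, say, $\mathcal W(\mathbf2,\mathbf1)$, and the canonical two-colour optimal path through $\xi$ (from the second step, possibly preceded by a two-colour optimal crossing when $\mathbf m\neq$ the sea of $\xi$) has $\xi$ as its unique maximum among these two sets, hence avoids $(\mathcal W(\mathbf2,\mathbf1)\setminus\{\xi\})\cup\mathcal W(\mathbf3,\mathbf1)$; the case $\xi\in\mathcal W(\mathbf3,\mathbf1)$ is symmetric. For $\mathcal W(\mathbf m,\mathbf1)\cup\mathscr A$: if a proper subset misses some $\xi\in\mathcal W(\mathbf m,\mathbf1)$, use the canonical direct path through $\xi$, which never enters $\mathcal C_{m'}$ and so avoids $\mathscr A$; otherwise the subset contains $\mathcal W(\mathbf m,\mathbf1)$ and misses some $\eta\in\mathscr A$, and since $\mathscr A$ is a minimal gate for the $\mathbf2\leftrightarrow\mathbf3$ transition there is an optimal path $\mathbf m\to\mathbf m'$ meeting $\mathscr A$ only at $\eta$; concatenating it with a canonical two-colour optimal path $\mathbf m'\to\mathbf1$ yields an optimal path $\mathbf m\to\mathbf1$ avoiding $\mathcal W(\mathbf m,\mathbf1)\cup(\mathscr A\setminus\{\eta\})$, the colour bookkeeping showing that no configuration with colours in $\{2,3\}$ or in $\{1,3\}$ other than $\mathbf2,\mathbf3,\mathbf1$ lies in $\mathcal W(\mathbf m,\mathbf1)$. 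Finally $\bigcup_{\mathscr A}\mathscr A=\mathcal W(\mathbf2,\mathbf3)$ by \eqref{e_Wmm'}, so (a)--(b) give $\mathcal G(\mathbf m,\mathbf1)\supseteq\mathcal W(\mathbf2,\mathbf1)\cup\mathcal W(\mathbf3,\mathbf1)\cup\mathcal W(\mathbf2,\mathbf3)$; conversely any configuration in a minimal gate lies in $\mathcal S(\mathbf m,\mathbf1)$ and hence, by the structural lemma and the identification of the $\mathcal S_{rs}$, lies in $\mathcal W(\mathbf2,\mathbf1)\cup\mathcal W(\mathbf3,\mathbf1)\cup\mathcal S_{23}$, while a $\mathcal S_{23}$-saddle outside $\mathcal W(\mathbf2,\mathbf3)$ is unessential already for $\mathbf2\leftrightarrow\mathbf3$ and so can be rerouted away inside any optimal $\mathbf m\to\mathbf1$ path, hence lies in no minimal gate. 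This proves (c) for $\mathbf m=\mathbf2$, and the $2\leftrightarrow3$ symmetry yields $\mathcal G(\mathbf3,\mathbf1)=\mathcal G(\mathbf2,\mathbf1)$.
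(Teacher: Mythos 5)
Your overall architecture (successive visits to the three initial cycles, reduction of each crossing to a two-colour Ising problem via the projections, reference paths for minimality, essential-saddle characterization for exhaustivity) is the same as the paper's, but two of your load-bearing steps are wrong as stated. First, your ``structural lemma'' --- that every portion of an optimal path lying strictly below the level $H(\mathbf2)+\Gamma^\star$ sits entirely inside one of $\mathcal{C}_1,\mathcal{C}_2,\mathcal{C}_3$ --- is false. Take the intermediate configurations of the reference path of Definition \ref{d_refpath2}: a strip of spins $3$ of width $2\le w\le L-2$ with two flat column interfaces has energy $H(\mathbf2)+2K\gamma_{23}<H(\mathbf2)+\Gamma^\star$, yet every single spin flip from it raises the energy by at least $2\gamma_{23}$ (interface flips cost exactly $2\gamma_{23}$, bulk $2\leftrightarrow3$ flips cost $4\gamma_{23}$, and flips to $1$ cost at least $4\gamma_{12}-\gamma_{23}$, which exceeds $2\gamma_{23}$ by Assumption \ref{assump}-\textbf{C}); hence $\Phi(\eta,\mathbf{r})=H(\mathbf2)+\Gamma^\star$ for all $r$ and $\eta$ belongs to none of the three cycles, although it lies on optimal paths $\mathbf2\to\mathbf1$. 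The paper never needs such a claim: it argues with the \emph{last} visit of $\omega$ to $\mathcal{C}_2\cup\mathcal{C}_3$ (for part (a)) and the \emph{first} visit to $\mathcal{C}_1\cup\mathcal{C}_3$ (for part (b)), and then invokes the restricted-path Propositions \ref{p_213} and \ref{p_231}. Your multigraph-walk picture can be repaired along those lines, but as written its foundation fails, and your exhaustivity argument in (c), which leans on the same lemma to classify all minimal saddles as ``inter-cycle crossings,'' inherits the gap.

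Second, you repeatedly assert that $\mathscr{A}$ alone is a (minimal) gate for the $\mathbf2\leftrightarrow\mathbf3$ transition ``by Theorem \ref{t_gate2}.'' That is not what Theorem \ref{t_gate2} says, and it is false in this model: precisely because $\Gamma(\mathbf2,\mathbf3)=\Gamma(\mathbf2,\mathbf1)=\Gamma(\mathbf3,\mathbf1)=\Gamma^\star$, an optimal path $\mathbf2\to\mathbf3$ may detour through the critical droplets of spin $1$ (visiting $\mathcal{W}(\mathbf2,\mathbf1)$ and then $\mathcal{W}(\mathbf3,\mathbf1)$) and avoid $\mathcal{W}(\mathbf2,\mathbf3)$ entirely; this is exactly why the minimal gates in Theorem \ref{t_gate2} have the form $\mathcal{W}(\mathbf{m},\mathbf1)\cup\mathscr{A}$. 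What your argument actually needs --- both for the gate claim in (b) and for the minimality step where you want a $\mathbf2\to\mathbf3$ optimal path meeting $\mathscr{A}$ only at a prescribed $\eta$ --- is the \emph{restricted} statement: an optimal $\mathbf2\to\mathbf3$ path avoiding $\mathcal{C}_1$ must cross $\mathscr{A}$ (the paper's Proposition \ref{p_231}), together with the explicit $23$-reference paths of Definition \ref{d_refpath2}. Proposition \ref{p_231} is the genuinely hard ingredient here: it requires controlling the number of spins $1$ along the path (via Lemma \ref{l_critical} and the bound $\mathfrak{N}_1\le\ell^{\star2}$) before the projection $\mathscr{P}^{12}$ can be applied, and none of this is supplied by citing Theorem \ref{t_gate2} as a black box (which, in the paper, is itself proved from Propositions \ref{p_213} and \ref{p_231}). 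Your minimality constructions via two-colour reference paths and your rerouting strategy for (c) are in the spirit of the paper and would go through once these two points are fixed, but as it stands the proposal has real gaps at both places.
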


Next, we state a theorem regarding the minimal gates for the transition between $\mathbf2$ and $\mathbf3$.

\begin{theorem}[Minimal gates for $\mathbf2\to\mathbf3$]\label{t_gate2}
Consider the transition from $\mathbf2$ to $\mathbf3$. Take any set
\[
\mathscr{A}\in\big\{\mathscr{W}_j^h(\mathbf2,\mathbf3)\big\}_{j,h}\cup\big\{\mathscr{Q}(\mathbf2,\mathbf3),\mathscr{P}(\mathbf2,\mathbf3),\mathscr{P}(\mathbf3,\mathbf2),\mathscr{Q}(\mathbf3,\mathbf2)\big\}\cup\big\{\mathscr{H}_i(\mathbf2,\mathbf3)\big\}_i\cup\big\{\mathscr{H}_i(\mathbf3,\mathbf2)\big\}_i,
\]
as in Theorem \ref{t_gate1}. Then,
\begin{enumerate}
\item[\emph{(a)}] $\mathcal{W}(\mathbf2,\mathbf1)\cup\mathscr{A}$ is a minimal gate;
\item[\emph{(b)}] $\mathcal{W}(\mathbf3,\mathbf1)\cup\mathscr{A}$ is a minimal gate;
\item[\emph{(c)}] these are all the minimal gates in the sense that
\begin{align*}
\mathcal{G}(\mathbf2,\mathbf3)=\mathcal{W}(\mathbf 2,\mathbf1)\cup\mathcal{W}(\mathbf 3,\mathbf1)\cup\mathcal{W}(\mathbf{2},\mathbf{3}).
\end{align*}
\end{enumerate}
\end{theorem}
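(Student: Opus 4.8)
The plan is to bootstrap Theorem~\ref{t_gate2} from Theorem~\ref{t_gate1} and the explicit reference paths, exploiting two structural inputs. First, by Theorem~\ref{t_comheight} the three communication heights coincide and, since $H(\mathbf2)=H(\mathbf3)$, concatenating an optimal $\mathbf2\to\mathbf3$ path with a reference $\mathbf3\to\mathbf1$ path, or a reference $\mathbf2\to\mathbf1$ path with a reference $\mathbf1\to\mathbf3$ path, always yields an optimal path ending at $\mathbf1$; this transfers the gate information of Theorem~\ref{t_gate1} to the transition $\mathbf2\to\mathbf3$. Second, a spin-support bookkeeping: every configuration of $\mathcal W(\mathbf2,\mathbf1)$ uses only spins $1,2$, every configuration of $\mathcal W(\mathbf3,\mathbf1)$ only spins $1,3$, and every configuration of each admissible set $\mathscr A$ uses exactly spins $2,3$; since each reference path can be realized with only two spin values, a reference path for one of the three elementary transitions is automatically disjoint from the gate of either of the other two. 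Finally, I use that the initial cycles $\mathcal C_1,\mathcal C_2,\mathcal C_3$ lie strictly below the common level $H(\mathbf2)+\Gamma^\star$, at which every configuration of $\mathcal W(\mathbf2,\mathbf1)\cup\mathcal W(\mathbf3,\mathbf1)\cup\mathcal W(\mathbf2,\mathbf3)$ sits; in particular such gate configurations never belong to a $\mathcal C_i$.

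For the gate property in (a) I would take any optimal $\omega:\mathbf2\to\mathbf3$ and append a two-spin reference path $\mathbf3\to\mathbf1$; the result is an optimal $\mathbf2\to\mathbf1$ path, hence meets the minimal gate $\mathcal W(\mathbf2,\mathbf1)\cup\mathscr A$ of the $\mathbf2\to\mathbf1$ transition by Theorem~\ref{t_gate1}(b), and since the appended piece uses only spins $1,3$ it misses that set, so $\omega$ itself meets it. Part (b) is proved the same way, appending a reference $\mathbf2\to\mathbf1$ path to the reversal of $\omega$ and invoking Theorem~\ref{t_gate1}(b) with $\mathbf m=\mathbf3$; it also follows from (a) via the Hamiltonian symmetry $2\leftrightarrow3$, which permutes the admissible sets $\mathscr A$ and exchanges $\mathcal W(\mathbf2,\mathbf1)$ with $\mathcal W(\mathbf3,\mathbf1)$. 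For minimality: by the minimality of the corresponding gate in Theorem~\ref{t_gate1}(b) there is an optimal $\mathbf2\to\mathbf1$ path meeting $\mathcal W(\mathbf2,\mathbf1)\cup\mathscr A$ exactly at a prescribed $\xi$; if $\xi\in\mathscr A$ this path can be taken of the form $\mathbf2\to\mathbf3\to\mathbf1$ whose $\mathbf2\to\mathbf3$ part is a direct two-spin path meeting $\mathscr A$ only at $\xi$ and missing $\mathcal W(\mathbf2,\mathbf1)$ (that part is the desired $\mathbf2\to\mathbf3$ path), while if $\xi\in\mathcal W(\mathbf2,\mathbf1)$ it is a direct $\mathbf2\to\mathbf1$ path meeting $\mathcal W(\mathbf2,\mathbf1)$ only at $\xi$ and missing $\mathscr A$, to which one appends a reference $\mathbf1\to\mathbf3$ path (spins $1,3$, disjoint from both $\mathcal W(\mathbf2,\mathbf1)$ and $\mathscr A$) to obtain a $\mathbf2\to\mathbf3$ path meeting the gate only at $\xi$. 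The same two constructions, mirrored, give minimality in (b).

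For (c), the inclusion $\mathcal W(\mathbf2,\mathbf1)\cup\mathcal W(\mathbf3,\mathbf1)\cup\mathcal W(\mathbf2,\mathbf3)\subseteq\mathcal G(\mathbf2,\mathbf3)$ is immediate from (a)--(b), since $\mathcal W(\mathbf2,\mathbf3)$ is the union of the admissible sets $\mathscr A$ and each of $\mathcal W(\mathbf2,\mathbf1)$, $\mathcal W(\mathbf3,\mathbf1)$, $\mathscr A$ lies in one of the minimal gates built there. For the reverse inclusion I would show that every $\xi$ lying in some minimal gate of $\mathbf2\to\mathbf3$ — equivalently, every essential saddle of that transition — lies in the right-hand side. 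Fixing an optimal $\omega$ with $\xi\in\mathrm{argmax}_\omega H$, one has $H(\xi)=H(\mathbf2)+\Gamma^\star$ and $\xi\notin\mathcal C_1\cup\mathcal C_2\cup\mathcal C_3$. If $\omega$ never enters the interior of $\mathcal C_1$, then $\xi$ lies on a direct optimal $\mathbf2\leftrightarrow\mathbf3$ path, and the classification of the minimal saddles of this direct transition (established in the reference-path analysis for Theorem~\ref{t_gate1}) forces $\xi\in\mathcal W(\mathbf2,\mathbf3)$ unless $\xi$ is reroutable, hence unessential. If $\omega$ enters the interior of $\mathcal C_1$, split $\omega$ into its maximal sub-paths disjoint from that interior; $\xi$ lies on one of them. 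A sub-path issuing from $\mathbf2$ or from $\mathbf3$ can be completed inside $\mathcal C_1$ down to $\mathbf1$ — possible since $\mathcal C_1$ lies below level $H(\mathbf2)+\Gamma^\star$ — producing an optimal $\mathbf2\to\mathbf1$ or $\mathbf3\to\mathbf1$ path containing $\xi$ in its argmax; since $\mathcal G(\mathbf2,\mathbf1)=\mathcal G(\mathbf3,\mathbf1)=\mathcal W(\mathbf2,\mathbf1)\cup\mathcal W(\mathbf3,\mathbf1)\cup\mathcal W(\mathbf2,\mathbf3)$ by Theorem~\ref{t_gate1}(c), either $\xi$ lies in that set or it admits a reroute, and as any reroute acts at level $H(\mathbf2)+\Gamma^\star$, hence outside all $\mathcal C_i$, it splices back into $\omega$ and exhibits $\xi$ as unessential for $\mathbf2\to\mathbf3$. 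A sub-path that is an internal excursion between two boundary configurations of $\mathcal C_1$ is simply deleted by routing inside $\mathcal C_1$ instead, which is cheaper, so $\xi$ is again unessential. This exhausts the cases and gives $\mathcal G(\mathbf2,\mathbf3)\subseteq\mathcal W(\mathbf2,\mathbf1)\cup\mathcal W(\mathbf3,\mathbf1)\cup\mathcal W(\mathbf2,\mathbf3)$.

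The main obstacle is the reverse inclusion in (c): ruling out essential saddles outside $\mathcal W(\mathbf2,\mathbf1)\cup\mathcal W(\mathbf3,\mathbf1)\cup\mathcal W(\mathbf2,\mathbf3)$. In contrast with the ferromagnetic Ising and Potts models, here configurations carrying all three spin values can sit at the critical energy $H(\mathbf2)+\Gamma^\star$, and one must show that from every such configuration, and from every near-critical droplet carrying a superfluous protuberance, the dynamics can either descend into some $\mathcal C_i$ or move at constant energy to a two-spin configuration — i.e.\ that it is reroutable. This is exactly the point where the projection operator $\mathscr{P}^{rs}$ of Section~\ref{Sec4} and the fine structure of the reference paths are indispensable; once this geometric input is in place, what remains is the combinatorial bookkeeping over the shape families $\mathscr H_i$, $\mathscr Q$, $\mathscr P$, $\mathscr W_j^h$ and their $2\leftrightarrow3$ mirror images.
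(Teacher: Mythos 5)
Your parts (a) and (b) are correct, and the route is essentially the legitimate bootstrap version of what the paper does: since $H(\mathbf2)=H(\mathbf3)$ and all three communication heights equal $H(\mathbf2)+\Gamma^\star$, concatenating an optimal $\mathbf2\to\mathbf3$ path with a two-spin reference path into $\mathbf1$ produces an optimal path for Theorem \ref{t_gate1}-(b), and the spin-support observation (configurations of $\mathcal{W}(\mathbf m,\mathbf1)$ contain both spins $m$ and $1$, configurations of $\mathscr{A}$ contain both spins $2$ and $3$) guarantees the appended piece avoids the gate; minimality then follows from Definitions \ref{d_refpath1} and \ref{d_refpath2} exactly as you say. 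The paper instead reruns the Theorem \ref{t_gate1} argument directly from Propositions \ref{p_213} and \ref{p_231} (splitting an optimal $\mathbf2\to\mathbf3$ path according to its first visit to $\mathcal{C}_1\cup\mathcal{C}_3$), but your version is equivalent and also yields the inclusion $\supseteq$ in (c).

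The genuine gap is in the reverse inclusion of (c). The sentence ``either $\xi$ lies in that set or it admits a reroute, and as any reroute acts at level $H(\mathbf2)+\Gamma^\star$, hence outside all $\mathcal C_i$, it splices back into $\omega$'' is an assertion of precisely the nontrivial step: unessentiality of $\xi$ for the transition $\mathbf2\to\mathbf1$ only provides \emph{some} optimal $\mathbf2\to\mathbf1$ path $\omega'$ with $\mathrm{argmax}_{\omega'}H\subseteq\mathrm{argmax}_{\tilde\omega}H\setminus\{\xi\}$; this is a global path, not a local detour at the critical level, and nothing in Theorem \ref{t_gate1}-(c) makes it rejoin $\omega$, so it does not by itself witness unessentiality for $\mathbf2\to\mathbf3$. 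The repair is to build the new $\mathbf2\to\mathbf3$ path explicitly, as in the proof of Theorem \ref{t_gate1}-(c): if $\omega$ meets both $\mathcal{W}(\mathbf2,\mathbf1)$ and $\mathcal{W}(\mathbf3,\mathbf1)$, concatenate the reference path $\mathbf2\to\mathbf1$ through some $\zeta_2\in\omega\cap\mathcal{W}(\mathbf2,\mathbf1)$ with the reversed reference path $\mathbf3\to\mathbf1$ through some $\zeta_3\in\omega\cap\mathcal{W}(\mathbf3,\mathbf1)$, whose argmax is $\{\zeta_2,\zeta_3\}\subseteq\mathrm{argmax}_\omega H\setminus\{\xi\}$; if $\omega$ misses, say, $\mathcal{W}(\mathbf2,\mathbf1)$, then part (a) forces $\omega\cap\mathscr{A}\ne\varnothing$ for every admissible $\mathscr{A}$, and one glues a direct $\mathbf2\to\mathbf3$ reference path through the visited configurations of $\mathcal{W}(\mathbf2,\mathbf3)$. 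Note also that in your Case A the path avoiding $\mathcal{C}_1$ need not be a two-spin path (it may still carry spin-$1$ droplets below the critical size), so invoking ``the classification of the minimal saddles of the direct transition'' presupposes Proposition \ref{p_231} and the projection $\mathscr{P}^{12}$ rather than replacing them; and since $\xi$ may occur at the critical level on several of your sub-paths of $\omega$, a reroute of a single sub-path does not remove it, whereas the two constructions above handle all occurrences simultaneously.
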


We prove Theorems \ref{t_gate1} and \ref{t_gate2} in Section \ref{Sec6.4}.\medskip{}

Finally, in the last result of this section we show that during the metastable transition, the process typically visits the corresponding gates identified in Theorems \ref{t_gate1} and \ref{t_gate2}.

\begin{corollary}\label{c_gate}
Take a set $\mathscr{A}$ in the collection
\[
\big\{\mathscr{W}_j^h(\mathbf2,\mathbf3)\big\}_{j,h}\cup\big\{\mathscr{Q}(\mathbf2,\mathbf3),\mathscr{P}(\mathbf2,\mathbf3),\mathscr{P}(\mathbf3,\mathbf2),\mathscr{Q}(\mathbf3,\mathbf2)\big\}\cup\big\{\mathscr{H}_i(\mathbf2,\mathbf3)\big\}_i\cup\big\{\mathscr{H}_i(\mathbf3,\mathbf2)\big\}_i
\]
as we did in Theorems \ref{t_gate1} and \ref{t_gate2}.
\begin{enumerate}
\item[\emph{(a)}] For the transition from $\mathbf{m}\in\{\mathbf2,\mathbf3\}$ to $\mathbf1$,
\[
\lim_{\beta\to\infty}\mathbb{P}_\beta[\tau_{\mathcal{W}(\mathbf2,\mathbf1)\cup\mathcal{W}(\mathbf3,\mathbf1)}^{\mathbf{m}}<\tau_\mathbf1^{\mathbf{m}}]=1\;\;\;\;\text{and}\;\;\;\;\lim_{\beta\to\infty}\mathbb{P}_\beta[\tau_{\mathcal{W}(\mathbf{m},\mathbf1)\cup\mathscr{A}}^{\mathbf{m}}<\tau_{\mathbf1}^{\mathbf{m}}]=1.
\]
\item[\emph{(b)}] For the transition $\mathbf2\to\mathbf3$,
\[
\lim_{\beta\to\infty}\mathbb{P}_\beta[\tau_{\mathcal{W}(\mathbf2,\mathbf1)\cup\mathscr{A}}^{\mathbf2}<\tau_\mathbf3^{\mathbf2}]=1\;\;\;\;\text{and}\;\;\;\;\lim_{\beta\to\infty}\mathbb{P}_\beta[\tau_{\mathcal{W}(\mathbf3,\mathbf1)\cup\mathscr{A}}^{\mathbf2}<\tau_{\mathbf3}^{\mathbf2}]=1.
\]
\end{enumerate}
\end{corollary}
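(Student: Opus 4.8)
The corollary will follow by combining the combinatorial identification of gates in Theorems~\ref{t_gate1} and~\ref{t_gate2} with the standard correspondence between gates and typical trajectories of the chain; see e.g.~\cite{manzo2004essential,cirillo2013relaxation,nardi2016hitting}. The elementary fact driving this correspondence is the following: fix disjoint $\mathcal A,\mathcal B\subseteq\mathcal X$ and a gate $\mathcal W$ for $\mathcal A\to\mathcal B$; if a path $\omega$ from $\mathcal A$ to $\mathcal B$ avoids $\mathcal W$, then $\omega\notin\Omega_{\mathcal A,\mathcal B}^{opt}$, so $\Phi_\omega>\Phi(\mathcal A,\mathcal B)$, and since $H$ takes only finitely many values on $\mathcal X$ there is a $\beta$-independent $\delta>0$ with $\Phi_\omega\ge\Phi(\mathcal A,\mathcal B)+\delta$. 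Consequently, on the event $\{\tau^{\sigma}_\mathcal B<\tau^{\sigma}_{\mathcal W}\}$ with $\sigma\in\mathcal A$, the chain must reach some configuration of energy at least $\Phi(\mathcal A,\mathcal B)+\delta$ before time $\tau^\sigma_\mathcal B$.

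For part~(a) take $\mathcal A=\{\mathbf m\}$ and $\mathcal B=\{\mathbf1\}$, so that $\Phi(\mathbf m,\mathbf1)=H(\mathbf m)+\Gamma^\star$ by Theorem~\ref{t_comheight}, and note that by Theorem~\ref{t_gate1} both $\mathcal W(\mathbf2,\mathbf1)\cup\mathcal W(\mathbf3,\mathbf1)$ and $\mathcal W(\mathbf m,\mathbf1)\cup\mathscr A$ are gates for $\mathbf m\to\mathbf1$. Fix $\epsilon'\in(0,\delta)$. By Theorem~\ref{t_trans}(a), $\tau^{\mathbf m}_{\mathbf1}<e^{\beta(\Gamma^\star+\epsilon')}$ with probability tending to one, while a routine large-deviation estimate of the type already used for Theorems~\ref{t_rec} and~\ref{t_trans} (based on Proposition~\ref{p_stablev} and~\cite{manzo2004essential}) shows that the probability that the chain started at $\mathbf m$ visits a configuration of energy $H(\mathbf m)+\Gamma^\star+\delta$ within time $e^{\beta(\Gamma^\star+\epsilon')}$ tends to zero. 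Applying the observation of the previous paragraph with $\mathcal W$ equal to each of the two gates yields $\mathbb P_\beta[\tau^{\mathbf m}_{\mathcal W}<\tau^{\mathbf m}_{\mathbf1}]\to1$.

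Part~(b) is entirely analogous: $\mathcal A=\{\mathbf2\}$, $\mathcal B=\{\mathbf3\}$, $\Phi(\mathbf2,\mathbf3)=H(\mathbf2)+\Gamma^\star$ by Theorem~\ref{t_comheight}, and $\mathcal W(\mathbf2,\mathbf1)\cup\mathscr A$ and $\mathcal W(\mathbf3,\mathbf1)\cup\mathscr A$ are gates for $\mathbf2\to\mathbf3$ by Theorem~\ref{t_gate2}. The only extra ingredient is an a priori bound $\tau^{\mathbf2}_{\mathbf3}<e^{\beta(\Gamma^\star+\epsilon')}$ with probability tending to one. This does not follow verbatim from~\cite[Theorem~4.1]{manzo2004essential}, since $\mathbf3$ is not stable and the stable state $\mathbf1$ lies ``in between'' $\mathbf2$ and $\mathbf3$ at the same communication height; but it can be obtained from the recurrence property (Theorem~\ref{t_rec}) together with the facts that $\Phi(\mathbf1,\mathbf3)=\Phi(\mathbf1,\mathbf2)=H(\mathbf2)+\Gamma^\star$ and, by the $2\leftrightarrow3$ symmetry, from $\mathbf1$ the chain hits $\mathbf3$ with probability bounded away from zero; thus only an $e^{o(\beta)}$ number of excursions out of $\{\mathbf1,\mathbf2\}$ is needed before $\mathbf3$ is reached, keeping the total time $e^{\beta\Gamma^\star(1+o(1))}$. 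With this bound the argument of part~(a) carries over.

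The main obstacle is precisely this a priori estimate on $\tau^{\mathbf2}_{\mathbf3}$: one must rule out that the deeper well $\mathbf1$, which sits at the same barrier $\Gamma^\star$ as the direct crossing, either traps the chain beyond the metastable timescale or lets it slip into $\mathbf3$ along a non-optimal route. Once one has checked that passing through $\mathbf1$ costs only an $e^{o(\beta)}$ factor of additional attempts — and observed that whenever the chain does pass through $\mathbf1$ before $\mathbf3$ it has necessarily crossed one of $\mathcal W(\mathbf2,\mathbf1)$ or $\mathcal W(\mathbf3,\mathbf1)$ — what remains is the standard large-deviation bookkeeping already developed for the earlier theorems.
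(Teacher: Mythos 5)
The paper proves this corollary in one line: by Theorems \ref{t_gate1} and \ref{t_gate2} the four sets are gates, and the model-independent result \cite[Theorem 5.4]{manzo2004essential} (valid for any gate of any transition $\sigma\to\mathcal A$, with no a priori control on $\tau_{\mathcal A}$) gives the limits directly. Your argument for part (a) is a correct hands-on substitute for that citation: a trajectory from $\mathbf m$ to $\mathbf 1$ avoiding a gate must exceed $H(\mathbf 2)+\Gamma^\star+\delta$, the hitting time $\tau^{\mathbf m}_{\mathbf 1}$ is at most $e^{\beta(\Gamma^\star+\epsilon')}$ with high probability by Theorem \ref{t_trans}(a), and the probability of climbing $\delta$ above the communication height within that time is $e^{-\beta(\delta-\epsilon')+o(\beta)}$ by the standard reversibility estimate. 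That buys you independence from the cited gate theorem at the price of a longer argument.

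Part (b), however, has a genuine gap, and it is exactly at the point you flag. The a priori bound $\mathbb{P}_\beta[\tau^{\mathbf 2}_{\mathbf 3}<e^{\beta(\Gamma^\star+\epsilon')}]\to1$ is false: starting from $\mathbf 2$, the chain exits its initial cycle through $\mathcal W(\mathbf2,\mathbf1)$ and falls into the well of $\mathbf 1$ before hitting $\mathbf 3$ with probability bounded away from zero, and since $\Phi(\mathbf1,\mathbf3)-H(\mathbf1)=\Gamma^\star+\big(H(\mathbf2)-H(\mathbf1)\big)=\Gamma^\star+2KL\gamma_1$, on that event $\tau^{\mathbf 2}_{\mathbf 3}$ is of order $e^{\beta(\Gamma^\star+2KL\gamma_1)}\gg e^{\beta(\Gamma^\star+\epsilon')}$. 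Your patch ("only an $e^{o(\beta)}$ number of excursions, total time $e^{\beta\Gamma^\star(1+o(1))}$") is therefore incorrect: each escape from the well of $\mathbf 1$ costs time $e^{\beta(\Gamma^\star+2KL\gamma_1)(1+o(1))}$, and the number of returns to $\mathbf 1$ before reaching $\mathbf 3$ is exponentially large, not $e^{o(\beta)}$. A correct completion must not go through a global time bound at all: either cite \cite[Theorem 5.4]{manzo2004essential} as the paper does, or run the bookkeeping per excursion — the probability that a single excursion from the bottom of $\mathcal C_1$ (or $\mathcal C_2$) exceeds $H(\mathbf2)+\Gamma^\star+\delta$ before returning is $e^{-\beta(\Gamma^\star+2KL\gamma_1+\delta)+o(\beta)}$ (respectively $e^{-\beta(\Gamma^\star+\delta)+o(\beta)}$), which beats the exponentially many excursions needed before $\tau_{\mathbf3}$, so that the trajectory up to $\tau_{\mathbf3}$ is an optimal path, hence meets the gate, with probability tending to one. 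As written, part (b) of your proof does not go through.
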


\begin{proof}
By Theorems \ref{t_gate1} and \ref{t_gate2}, the four sets in the subscripts are gates for the corresponding transitions. Thus, \cite[Theorem 5.4]{manzo2004essential} implies the desired equations.
\end{proof}

% SEC: Projection operator
\section{Projection operator}\label{Sec4}

In this section, we introduce the notion of projection operators which act on the configuration space $\mathcal X$. These operators are extremely useful for analyzing the energy landscape, especially when we want to focus only on  two given spin values. First, we introduce some notation which will be useful in this section and following ones. For each $\sigma\in\mathcal{X}$ and $i,j\in S$, we define the number of spins $i$ in $\sigma$ as
\begin{align}\label{e_Ni}
\mathfrak{N}_i(\sigma):=\big|\{v\in V:\ \sigma(v)=i\}\big|.
\end{align}
Then we define, for $n\ge0$, the set of configurations which have exactly $n$ spins $i$:
\begin{align}\label{e_Vni}
\mathscr{V}_n^i:=\{\eta\in\mathcal{X}:\ \mathfrak{N}_i(\eta)=n\}.
\end{align}
Moreover, for an edge $e\in E$, we say that $e$ is an \textit{$ij$-edge} of $\sigma$ if the corresponding two spins are $i$ and $j$ in $\sigma$. We write
\begin{align}\label{e_nij}
\mathfrak{n}_{ij}(\sigma):=\big|\{e\in E: \ e\text{ is an }ij\text{-edge of }\sigma\}\big|.
\end{align}
Finally, for spin values $r,s\in S$, we define the \textit{projection operator} $\mathscr{P}^{rs}:\mathcal{X}\rightarrow\mathcal{X}$ as
\begin{align}\label{e_proj}
(\mathscr{P}^{rs}\sigma)(x)=\begin{cases}
s, & \text{if }\sigma(x)=r,\\
\sigma(x), & \text{if }\sigma(x)\ne r.
\end{cases}
\end{align}
The operator $\mathscr{P}^{rs}$ projects all spins $r$ to $s$ and preserves all the other spins. Intuitively, one would expect the projected configuration to have lower energy than the original configuration, since all disagreeing edges between $r$ and $s$ disappear. This is in fact the case, unless the spin value $r$ is more stable than $s$ (for example, if $r=1$ and $s=2$), in which case the projected configuration may still have higher energy than the original configuration.

Two projections which are important for us are $\mathscr{P}^{32}$ and $\mathscr{P}^{12}$. We begin by analyzing $\mathscr{P}^{32}$. The analysis is simpler because spins $2$ and $3$ have the same level of stability. 

\begin{lemma}[Projection $3\to2$]\label{l_P32}
For any $\sigma\in\mathcal{X}$, we have
\begin{align}
H(\mathscr{P}^{32}\sigma)\le H(\sigma).
\end{align}
Moreover, equality holds if and only if $\mathfrak{n}_{23}(\sigma)=0$.
\end{lemma}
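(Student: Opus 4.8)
The plan is to compute $H(\sigma) - H(\mathscr{P}^{32}\sigma)$ explicitly by tracking how each type of edge contributes to the Hamiltonian \eqref{e_hamiltonian} before and after the projection. The key observation is that the projection $\mathscr{P}^{32}$ only affects edges incident to at least one vertex carrying spin $3$; every other edge contributes identically to both energies. So I would partition the relevant edges according to the pair of spin values on their endpoints in $\sigma$: namely $33$-edges, $23$-edges, and $13$-edges. Under $\mathscr{P}^{32}$, a $33$-edge of $\sigma$ becomes a $22$-edge, a $23$-edge becomes a $22$-edge, and a $13$-edge becomes a $12$-edge.

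Next I would use the hypotheses \eqref{e_Jii}, i.e.\ $J_{22}=J_{33}$ and $J_{12}=J_{13}$, to evaluate the energy change edge by edge. A $33$-edge contributes $-J_{33}=-J_{22}$ in $\sigma$ and $-J_{22}$ in $\mathscr{P}^{32}\sigma$, so it gives zero net change. A $13$-edge contributes $+J_{13}=+J_{12}$ in $\sigma$ and $+J_{12}$ after projecting, so again zero net change. The only edges producing a genuine difference are the $23$-edges: such an edge contributes $+J_{23}$ in $\sigma$ and $-J_{22}$ in $\mathscr{P}^{32}\sigma$, hence a decrease of exactly $J_{23}+J_{22}=\gamma_{23}>0$ per $23$-edge (recalling \eqref{e_gammadef}). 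Summing over all edges gives
\[
H(\sigma)-H(\mathscr{P}^{32}\sigma)=\gamma_{23}\,\mathfrak{n}_{23}(\sigma).
\]
Since $\gamma_{23}>0$, this is nonnegative, giving $H(\mathscr{P}^{32}\sigma)\le H(\sigma)$, and it equals zero precisely when $\mathfrak{n}_{23}(\sigma)=0$, which is the claimed equality condition.

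There is no real obstacle here; the lemma is a direct bookkeeping computation, and the only thing to be careful about is making sure the edge classification is exhaustive and disjoint — every edge of $\Lambda$ either has no endpoint with spin $3$ (unaffected) or falls into exactly one of the three types above — and that the symmetry assumptions \eqref{e_Jii} are invoked correctly so that the $33$- and $13$-edge contributions cancel. One could alternatively phrase the computation in terms of the counting functions $\mathfrak{n}_{ij}$ and $\mathfrak{N}_i$ directly, writing $H(\sigma)=-\sum_i J_{ii}\mathfrak{n}_{ii}(\sigma)+\sum_{i<j}J_{ij}\mathfrak{n}_{ij}(\sigma)$, and noting that $\mathscr{P}^{32}$ sends $\mathfrak{n}_{22}\mapsto\mathfrak{n}_{22}+\mathfrak{n}_{33}+\mathfrak{n}_{23}$, $\mathfrak{n}_{12}\mapsto\mathfrak{n}_{12}+\mathfrak{n}_{13}$, and kills $\mathfrak{n}_{33}$, $\mathfrak{n}_{23}$, $\mathfrak{n}_{13}$; substituting and simplifying with $J_{22}=J_{33}$, $J_{12}=J_{13}$ yields the same identity.
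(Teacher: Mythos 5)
Your proposal is correct and follows essentially the same route as the paper: both arguments are a direct bookkeeping of how $\mathscr{P}^{32}$ transforms the edge counts $\mathfrak{n}_{ij}$, use the symmetries $J_{22}=J_{33}$ and $J_{12}=J_{13}$ to cancel the $33$- and $13$-contributions, and arrive at the identity $H(\sigma)-H(\mathscr{P}^{32}\sigma)=\gamma_{23}\,\mathfrak{n}_{23}(\sigma)$, from which both the inequality and the equality condition follow since $\gamma_{23}>0$. The paper simply phrases the computation through its rewritten Hamiltonian \eqref{e_ham'} rather than edge by edge, which is the alternative formulation you already note at the end.
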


\begin{proof}
By the definition of $\mathscr{P}^{32}$, it is easy to check that $\mathfrak{n}_{11}(\mathscr{P}^{32}\sigma)=\mathfrak{n}_{11}(\sigma)$,  $\mathfrak{n}_{12}(\mathscr{P}^{32}\sigma)=\mathfrak{n}_{12}(\sigma)+\mathfrak{n}_{13}(\sigma)$, $\mathfrak{n}_{13}(\mathscr{P}^{32}\sigma)=0$, $\mathfrak{n}_{22}(\mathscr{P}^{32}\sigma)=\mathfrak{n}_{22}(\sigma)+\mathfrak{n}_{23}(\sigma)+\mathfrak{n}_{33}(\sigma)$ and $\mathfrak{n}_{23}(\mathscr{P}^{32}\sigma)=\mathfrak{n}_{33}(\mathscr{P}^{32}\sigma)=0$. Thus, using the interpretation \eqref{e_ham'} we may write
\[
H(\mathscr{P}^{32}\sigma)=H(\mathbf2)-\gamma_1\mathfrak{n}_{11}(\sigma)+\gamma_{12}[\mathfrak{n}_{12}(\sigma)+\mathfrak{n}_{13}(\sigma)]
\]
and
\[
H(\sigma)=H(\mathbf2)-\gamma_1\mathfrak{n}_{11}(\sigma)+\gamma_{12}\mathfrak{n}_{12}(\sigma)+\gamma_{13}\mathfrak{n}_{13}(\sigma)+\gamma_{23}\mathfrak{n}_{23}(\sigma).
\]
Recalling that $\gamma_{12}=\gamma_{13}$ from \eqref{e_Jii} and \eqref{e_gammadef}, we deduce
\[
H(\mathscr{P}^{32}\sigma)-H(\sigma)=-\gamma_{23}\mathfrak{n}_{23}(\sigma)\le0.
\]
This proves the first statement. Moreover, from \eqref{e_gammadef} it follows that the equality holds if and only if $\mathfrak{n}_{23}(\sigma)=0$. This concludes the proof.
\end{proof}

For a configuration $\sigma\in\mathcal{X}$, we say that a row (resp.~column) in $\Lambda$ is a \textit{horizontal bridge} (resp.~\textit{vertical bridge}) of $\sigma$ if all spins on it have the same value. If there exist both a horizontal bridge and a vertical bridge simultaneously (in which case the spin value must be the same), we call the union a \textit{cross}.

Given a configuration $\sigma\in\mathcal{X}$ and a spin $r\in S$, we say that $A\subseteq V$ is an \textit{$r$-cluster} of $\sigma$ if it is a maximal connected subset of $V$ on which all spins are $r$; i.e., if $A$ is connected, $\sigma(v)=r$ for all $v\in A$ and $\sigma(v)\ne r$ for all $v\in \partial A$, where $\partial A$ is the \textit{outer boundary} of $A$:
\begin{align}\label{e_bdry}
\partial A:=\big\{w\in V\setminus A:\ \exists w'\in A,\;\{w,w'\}\in E\big\}.
\end{align}

Next we deal with the projection $\mathscr{P}^{12}$. In this case, as we mentioned in the beginning of this section, the statement becomes much more restrictive because spin $1$ is more stable than spin $2$. Indeed, the next lemma shows that the projection $\mathscr{P}^{12}$ lowers the energy of a configuration only when this has a low number of spins $1$.

\begin{lemma}[Projection $1\to2$]\label{l_P12}
Suppose that $\sigma\in\mathcal{X}$ satisfies $H(\sigma)-H(\mathbf2)\le\Gamma^\star$ and $\mathfrak{N}_1(\sigma)\le\ell^{\star2}$. Then, we have
\begin{align}
H(\mathscr{P}^{12}\sigma)\le H(\sigma).
\end{align}
Moreover, equality holds if and only if $\mathfrak{N}_1(\sigma)=0$.
\end{lemma}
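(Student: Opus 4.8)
The plan is to compare $H(\mathscr{P}^{12}\sigma)$ and $H(\sigma)$ via the interpretation of the Hamiltonian in terms of edge counts, exactly as in the proof of Lemma~\ref{l_P32}, but now keeping careful track of the fact that the projection $1\to2$ both removes the favorable $11$-edges (energy increase) and removes the unfavorable $12$- and $13$-edges (energy decrease). First I would write, using \eqref{e_ham'},
\[
H(\mathscr{P}^{12}\sigma)-H(\sigma)=\gamma_1\mathfrak{n}_{11}(\sigma)-\gamma_{12}\mathfrak{n}_{12}(\sigma)-\gamma_{13}\mathfrak{n}_{13}(\sigma),
\]
noting that edges incident to spin $3$ only (i.e.\ $33$- and $23$-edges) are unaffected, and $\gamma_{12}=\gamma_{13}$. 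So the claim reduces to the purely geometric inequality $\gamma_1\mathfrak{n}_{11}(\sigma)\le\gamma_{12}\big(\mathfrak{n}_{12}(\sigma)+\mathfrak{n}_{13}(\sigma)\big)$, i.e.\ the number of internal edges of the union of the $1$-clusters is controlled, up to the factor $\gamma_{12}/\gamma_1$, by the size of the boundary separating spin $1$ from spins $2$ and $3$.

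The key step is an isoperimetric-type estimate for the region $A:=\{v:\sigma(v)=1\}$ on the torus $\mathbb{T}_K\times\mathbb{T}_L$. Write $a:=\mathfrak{N}_1(\sigma)=|A|$; the hypothesis gives $a\le\ell^{\star2}$. The internal edge count $\mathfrak{n}_{11}(\sigma)$ is maximized, for fixed $a$, by the most ``rectangular'' configuration, and the boundary edge count $\mathfrak{n}_{12}(\sigma)+\mathfrak{n}_{13}(\sigma)$ is the edge-perimeter of $A$. Since $a\le\ell^{\star2}<K^2$ (recall $\ell^\star<\tfrac{2\gamma_{12}+\gamma_1}{2\gamma_1}+1$ and Assumption~\ref{assump}-\textbf{B}--\textbf{C} force $\ell^\star$ small relative to $K$), the set $A$ cannot wrap around the torus, so the standard discrete isoperimetric inequality on $\mathbb{Z}^2$ applies: if $A$ has $a$ sites arranged with $\mathfrak{n}_{11}$ internal edges then the perimeter is at least $4\lceil 2\sqrt a\rceil - \text{(correction)}$, and more usefully $\mathfrak{n}_{11}(\sigma)\le a-\lceil\sqrt a\rceil$ while the perimeter is at least $2\lceil 2\sqrt a\rceil\ge 4\sqrt a$ when $A$ is connected; for a general $A$ with several clusters one sums the per-cluster bounds, which only helps. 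Combining, it suffices to check $\gamma_1(a-\sqrt a)\le 2\gamma_{12}\sqrt a$ for all integers $a\le\ell^{\star2}$, equivalently $\gamma_1(\sqrt a-1)\le 2\gamma_{12}/\sqrt a\cdot\sqrt a$... more directly $\gamma_1 a\le(\gamma_1+2\gamma_{12})\sqrt a$, i.e.\ $\sqrt a\le\tfrac{\gamma_1+2\gamma_{12}}{\gamma_1}$; since $a\le\ell^{\star2}$ gives $\sqrt a\le\ell^\star<\tfrac{2\gamma_{12}+\gamma_1}{2\gamma_1}+1$, this needs a slightly sharper version of the perimeter bound, which is where the condition $H(\sigma)-H(\mathbf2)\le\Gamma^\star$ enters to further restrict the admissible shapes of $A$ (it bounds the total boundary, preventing $A$ from being too elongated). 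I would feed both hypotheses into a single case analysis on the bounding rectangle of each $1$-cluster.

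The equality case: if $\mathfrak{N}_1(\sigma)=0$ then $\mathscr{P}^{12}\sigma=\sigma$ and equality is trivial. Conversely, if $\mathfrak{N}_1(\sigma)\ge1$ then $A\neq\varnothing$ and, being a nonempty proper subset of $V$ (it is proper since $\sigma\neq\mathbf1$, as $H(\sigma)-H(\mathbf2)\le\Gamma^\star<\infty$ forces $\sigma\neq\mathbf1$ when... actually one must rule out $\sigma=\mathbf1$ separately, but $H(\mathbf1)-H(\mathbf2)=-2KL\gamma_1+2K\gamma_{12}<0\le\Gamma^\star$, so $\sigma=\mathbf1$ is not excluded by the hypothesis; however for $\sigma=\mathbf1$ we have $\mathfrak{N}_1=KL>\ell^{\star2}$, so it is excluded by the size hypothesis), the boundary $\partial A$ is nonempty, giving $\mathfrak{n}_{12}(\sigma)+\mathfrak{n}_{13}(\sigma)\ge1$ and strict inequality in the geometric bound above (the isoperimetric estimate is strict for small $a$).

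\medskip\noindent\textbf{Main obstacle.} The delicate point is the isoperimetric step: the crude bound $\gamma_1 a\le(\gamma_1+2\gamma_{12})\sqrt a$ is \emph{not} quite implied by $\sqrt a\le\ell^\star$ alone (it is off by roughly a factor of $2$), so the argument genuinely requires combining $\mathfrak{N}_1(\sigma)\le\ell^{\star2}$ with the energy bound $H(\sigma)-H(\mathbf2)\le\Gamma^\star$ — and this is presumably exactly where Assumption~\ref{assump}-\textbf{C} ($2\gamma_{12}\ge 4\gamma_{23}+\gamma_1$) is invoked, as Remark~\ref{r_opt} announces. Concretely, I expect the proof to run a ``peeling'' procedure on the $1$-cluster: remove spin $1$'s one at a time, first along the boundary row/column of the bounding rectangle then in the interior, bounding the energy cost of each flip by the worst case (a flip creating at most two new disagreeing edges with spins $2$ or $3$ and destroying at most two $11$-edges), and show the running maximum never exceeds $H(\sigma)$; the arithmetic closing this off is where \textbf{C} is tight. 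This mirrors the ``distinctive update of spins $1$'' procedure flagged in the introduction for Proposition~\ref{p_stablev}.
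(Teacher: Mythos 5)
There is a genuine gap, and it starts in your very first display. Under $\mathscr{P}^{12}$ a $13$-edge does not disappear: it becomes a $23$-edge, which still carries the positive cost $\gamma_{23}$. The correct bookkeeping (used in the paper, via \eqref{e_ham'} and $\gamma_{13}=\gamma_{12}$) is
\begin{align*}
H(\mathscr{P}^{12}\sigma)-H(\sigma)=\gamma_1\mathfrak{n}_{11}(\sigma)-\gamma_{12}\mathfrak{n}_{12}(\sigma)+(\gamma_{23}-\gamma_{12})\mathfrak{n}_{13}(\sigma),
\end{align*}
so the inequality you actually have to prove is $\gamma_1\mathfrak{n}_{11}\le\gamma_{12}\mathfrak{n}_{12}+(\gamma_{12}-\gamma_{23})\mathfrak{n}_{13}$, strictly stronger than your reduction $\gamma_1\mathfrak{n}_{11}\le\gamma_{12}(\mathfrak{n}_{12}+\mathfrak{n}_{13})$. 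This is precisely where Assumption \textbf{C} is needed: combining $2\mathfrak{n}_{11}+\mathfrak{n}_{12}+\mathfrak{n}_{13}=4\mathfrak{N}_1$ with the isoperimetric bound $\mathfrak{n}_{12}+\mathfrak{n}_{13}\ge4\sqrt{\mathfrak{N}_1}$, the paper reduces to $(\sqrt{\mathfrak{N}_1}-1)\gamma_1<\gamma_{12}+\tfrac{\gamma_1}{2}\le2(\gamma_{12}-\gamma_{23})$, the last step being exactly \textbf{C}. With your (incorrect) formula the arithmetic would in fact close using only $\sqrt{\mathfrak{N}_1}\le\ell^\star$ (your ``off by a factor of $2$'' worry is a symptom of the missing $\gamma_{23}$ penalty, not of the perimeter bound), and no sharper isoperimetric input or ``peeling'' procedure is needed or used here — that procedure belongs to Proposition \ref{p_stablev}, not to this lemma.

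The second gap is your dismissal of wrapping: $\mathfrak{N}_1(\sigma)\le\ell^{\star2}$ does not prevent the set of spins $1$ from containing a full column (that needs only $K$ sites, and nothing in the assumptions forces $\ell^{\star2}<K$; e.g.\ $\gamma_{23}$ may exceed $\gamma_1$, making $K\approx\gamma_{12}^2/(\gamma_1\gamma_{23})$ smaller than $\ell^{\star2}\approx\gamma_{12}^2/\gamma_1^2$). The paper instead splits into three cases — $\sigma$ has a $1$-cross, a $1$-bridge but no cross, or no $1$-bridge — and rules out the first two by applying $\mathscr{P}^{32}$ (Lemma \ref{l_P32}) and the isoperimetric inequality to show any such $\sigma$ would satisfy $H(\sigma)-H(\mathbf2)>\Gamma^\star$, contradicting the hypothesis. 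That is the actual role of the energy assumption $H(\sigma)-H(\mathbf2)\le\Gamma^\star$; it is not used, as you speculate, to constrain the shape of the $1$-clusters in the non-wrapping case, which relies only on $\mathfrak{N}_1(\sigma)\le\ell^{\star2}$ and Assumption \textbf{C}. The equality discussion also needs to be rebuilt on the corrected identity, where strictness for $\mathfrak{N}_1(\sigma)\ge1$ falls out of the same estimate.
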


\begin{proof}
Abbreviate $\tilde\sigma:=\mathscr{P}^{12}\sigma$ and $\bar\sigma:=\mathscr{P}^{32}\sigma$. Clearly, we have $\mathfrak{N}_1(\bar\sigma)=\mathfrak{N}_1(\sigma)$. We divide into three cases according to the type of $1$-bridges of $\sigma$.
\begin{itemize}
\item If $\sigma$ has an $1$-cross, then we can regard $\bar\sigma$ as a configuration of spins $2$ in the sea of spins $1$. Applying the well-known isoperimetric inequality (e.g. \cite[Corollary 2.5]{alonso1996three}) to the $2$-clusters, the perimeter is at least
\[
4\sqrt{\mathfrak{N}_2(\bar\sigma)}=4\sqrt{KL-\mathfrak{N}_1(\bar\sigma)}\ge4\sqrt{KL-\ell^{\star2}}.
\]
Since the perimeter is exactly $\mathfrak{n}_{12}(\sigma)$, we have
\begin{align*}
\mathfrak{n}_{12}(\bar\sigma)\ge4\sqrt{KL-\ell^{\star2}}.
\end{align*}
Next, since $\mathfrak{N}_1(\bar\sigma)\le\ell^{\star2}$ and each vertex with spin $1$ is contained in at most four $11$-edges, we have that
\begin{align}\label{e_21LBn11}
\mathfrak{n}_{11}(\bar\sigma)\le\frac12\times4\times \mathfrak{N}_1(\bar\sigma)\le 2\ell^{\star2},
\end{align}
where the factor $\frac12$ appears because there are two spins $1$ in a single $11$-edge. Therefore, by \eqref{e_ham'} we deduce that
\begin{align*}
H(\bar\sigma)-H(\mathbf2)=-\gamma_1\mathfrak{n}_{11}(\bar\sigma)+\gamma_{12}\mathfrak{n}_{12}(\bar\sigma)\ge4\sqrt{KL-\ell^{\star2}}\gamma_{12}-2\ell^{\star2}\gamma_1.
\end{align*}
This is strictly bigger than $\Gamma^{\star}=4\ell^{\star}\gamma_{12}-(2\ell^{\star2}-4\ell^{\star}+2)\gamma_1$ (cf. \eqref{Gammametastable}).
Indeed, we have to verify that
\begin{align*}
[2\sqrt{KL-\ell^{\star2}}-2\ell^{\star}]\gamma_{12}>(2\ell^{\star}-1)\gamma_1.
\end{align*}
This holds since by Assumption \ref{assump}-\textbf{C} we have $\gamma_{12}>\gamma_1$, and since the lattice size $K$ and $L$ are assumed to be sufficiently larger than the critical length $\ell^\star$ we have that $2\sqrt{KL-\ell^{\star2}}-2\ell^\star\ge2\ell^\star-1$. Therefore, in this case, by Lemma \ref{l_P32} we always have $H(\sigma)-H(\mathbf2)\ge H(\bar\sigma)-H(\mathbf2)>\Gamma^\star$, which contradicts the assumption.
\item If $\sigma$ has an $1$-bridge but no $1$-cross, then there are
at least $K$ rows or $L$ columns in $\bar\sigma$ which are not bridges. In each non-bridge of $\bar\sigma$, there are at least two $12$-edges. Moreover, by the same logic as in \eqref{e_21LBn11} there are at most $2\ell^{\star2}$ $11$-edges. Therefore, we have
\begin{align*}
H(\bar\sigma)-H(\mathbf2)\ge2\min\{K,L\}\gamma_{12}-2\ell^{\star2}\gamma_1 = 2K \gamma_{12}-2\ell^{\star2}\gamma_1.
\end{align*}
Similarly, this is strictly bigger than $\Gamma^{\star}$ due to Assumption \ref{assump}-\textbf{C}, so that we get a contradiction.
\item If $\sigma$ does not have an $1$-bridge, then all $1$-clusters of $\sigma$ do not wrap around the periodic lattice, and thus we may apply the isoperimetric inequality. By the projection $\sigma\mapsto\tilde\sigma=\mathscr{P}^{12}\sigma$, only the $11$-, $12$- and $13$-edges of $\sigma$ are affected. Thus, by \eqref{e_ham'} we may write
\begin{align*}
H(\tilde\sigma)-H(\sigma)=\gamma_1\mathfrak{n}_{11}(\sigma) -\gamma_{12}\mathfrak{n}_{12}(\sigma) +(\gamma_{23}-\gamma_{13})\mathfrak{n}_{13}(\sigma).
\end{align*}
Since $\gamma_{12}=\gamma_{13}>0$ and $\gamma_{23}>0$ by \eqref{e_gammadef},
\begin{align*}
H(\tilde\sigma)-H(\sigma)\le \gamma_1\mathfrak{n}_{11}(\sigma)-[\mathfrak{n}_{12}(\sigma)+\mathfrak{n}_{13}(\sigma)]\times(\gamma_{12}-\gamma_{23}).
\end{align*}
By \eqref{e_nijNi}, it holds that $2\mathfrak{n}_{11}(\sigma)+\mathfrak{n}_{12}(\sigma)+\mathfrak{n}_{13}(\sigma)=4\mathfrak{N}_1(\sigma)$. Again by the isoperimetric inequality, $\mathfrak{n}_{12}(\sigma)+\mathfrak{n}_{13}(\sigma)\ge4\sqrt{\mathfrak{N}_1(\sigma)}$ and
thus the last-displayed formula is bounded above by
\begin{align*}
2\big[\mathfrak{N}_1(\sigma)-\sqrt{\mathfrak{N}_1(\sigma)}\big]\gamma_1-4\sqrt{\mathfrak{N}_1(\sigma)}(\gamma_{12}-\gamma_{23}).
\end{align*}
Thus to prove $H(\tilde\sigma)\le H(\sigma)$, it suffices to verify that
\[
2(\gamma_{12}-\gamma_{23})>(\sqrt{\mathfrak{N}_1(\sigma)}-1)\gamma_1.
\]
Since $\mathfrak{N}_1(\sigma)\le\ell^{\star2}$ and $\ell^\star=\lceil\frac{2\gamma_{12}+\gamma_1}{2\gamma_1}\rceil<\frac{2\gamma_{12}+\gamma_1}{2\gamma_1}+1$ (cf. \eqref{e_consts}), we have
\begin{align}\label{e_opt}
(\sqrt{\mathfrak{N}_1(\sigma)}-1)\gamma_1<\frac{2\gamma_{12}+\gamma_1}{2\gamma_1}\times\gamma_1=\gamma_{12}+\frac{\gamma_1}2\le2(\gamma_{12}-\gamma_{23}).
\end{align}
The last inequality follows from Assumption \ref{assump}-\textbf{C}. Therefore, we conclude the proof of the first statement.
\end{itemize}
Finally, we investigate the equality condition. Carefully inspecting the proof above, the equality holds if and only if $\sigma$ does not have an $1$-bridge, $\mathfrak{n}_{12}(\sigma)=0$ and $\mathfrak{N}_1(\sigma)=0$. This is equivalent to saying that $\mathfrak{N}_1(\sigma)=0$, in which case $\tilde\sigma=\sigma$ and thus the equality is obvious.
\end{proof}

\begin{remark}
The last inequality in \eqref{e_opt} is where the exact condition \textbf{C} is required. Indeed,
\[
\gamma_{12}+\frac{\gamma_1}2\le2(\gamma_{12}-\gamma_{23})\;\;\;\;\text{if and only if}\;\;\;\;2\gamma_{12}\ge4\gamma_{23}+\gamma_1.
\]
\end{remark}

% SEC: Comparison with the original Ising path
\section{Comparison with the original Ising path}\label{Sec5}

In this section, we prove that if a path is restricted to only two spins, then it is equivalent, in the sense of communication height and gates, to the original Ising path with positive external field (if the spins are $\{1,2\}$ or $\{1,3\}$) or the original Ising path with zero external field (if the spins are $\{2,3\}$).

We say that a path $\omega$ is an \textit{$ij$-path} if it involves spins $i$ and $j$ only, i.e., $\omega_n(v)\in\{i,j\}$ for all $n$ and $v\in V$. According to the notation \eqref{e_Xij}, this is equivalent to $\omega\subseteq\mathcal{X}^{ij}$.

\begin{proposition}[Gate property 1]\label{p_gatepos}
Suppose that $\omega=(\omega_n)_{n=0}^N$ is an $1r$-path from $\mathbf{r}$ to $\mathbf1$ for some $r\in\{2,3\}$.
\begin{enumerate}
\item[\emph{(a)}] $\Phi_\omega\ge H(\mathbf2)+\Gamma^\star$.
\item[\emph{(b)}] If $\Phi_\omega=H(\mathbf2)+\Gamma^\star$, then $
\omega\cap\mathcal{W}(\mathbf{r},\mathbf1)\ne\varnothing$ (cf. \eqref{e_Wm1}).
\end{enumerate}
\end{proposition}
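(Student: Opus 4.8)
The plan is to reduce the statement to the classical two-dimensional Ising model with positive external field, whose communication height and minimal gates between the two constant configurations are classical (see \cite{neves1991critical}); these are precisely the Ising facts that are recalled in this section and that I would take as input.

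First I would record the precise form of $H$ on $1r$-configurations. Since $\omega\subseteq\mathcal{X}^{1r}$ and $\gamma_{1r}=\gamma_{12}$ for both $r\in\{2,3\}$ by \eqref{e_Jii}--\eqref{e_gammadef}, combining the interpretation \eqref{e_ham'} with the edge count \eqref{e_nijNi} (which on $\mathcal{X}^{1r}$ reads $2\mathfrak{n}_{11}(\sigma)+\mathfrak{n}_{1r}(\sigma)=4\mathfrak{N}_1(\sigma)$) gives, for every $\sigma\in\mathcal{X}^{1r}$,
\[
H(\sigma)=H(\mathbf2)+\Big(\gamma_{12}+\frac{\gamma_1}{2}\Big)\mathfrak{n}_{1r}(\sigma)-2\gamma_1\mathfrak{N}_1(\sigma).
\]
Under the bijection $\mathcal{X}^{1r}\to\{-1,+1\}^V$ sending the spins $r$ to $-1$ and the spins $1$ to $+1$, the term $\mathfrak{n}_{1r}(\sigma)$ counts the disagreeing edges and $\mathfrak{N}_1(\sigma)$ the number of $+1$ spins, so the display shows that $H|_{\mathcal{X}^{1r}}$ coincides, up to an additive constant, with the standard Ising Hamiltonian on the torus $\mathbb{T}_K\times\mathbb{T}_L$ with coupling $J$ and field $h$ given by $2J=\gamma_{12}+\gamma_1/2$ and $h=\gamma_1>0$, the configuration $\mathbf{r}$ playing the role of the metastable all-$(-1)$ state and $\mathbf1$ that of the stable all-$(+1)$ state. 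In particular the Ising critical length is $\lceil 2J/h\rceil=\lceil(2\gamma_{12}+\gamma_1)/(2\gamma_1)\rceil=\ell^\star$, and Assumption \ref{assump}-\textbf{A} ensures that this ratio is non-integral, so we are in the ``standard case'' in which the Neves--Schonmann description of the critical droplet is valid.

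For part (a) I would then invoke the recalled Ising estimate: on such a torus the communication height between all-$(-1)$ and all-$(+1)$ equals $8J\ell^\star-2h(\ell^{\star2}-\ell^\star+1)$, which after substituting $2J=\gamma_{12}+\gamma_1/2$, $h=\gamma_1$ and comparing with \eqref{Gammametastable} is exactly $\Gamma^\star$; since through the bijection $\omega$ is a path from all-$(-1)$ to all-$(+1)$, its height is at least $H(\mathbf2)+\Gamma^\star$. For part (b), if moreover $\Phi_\omega=H(\mathbf2)+\Gamma^\star$, then the Ising path associated with $\omega$ is optimal and hence must cross the Ising gate; in the standard case, and because $\ell^\star$ is much smaller than $K\le L$ so that optimal droplets never wrap around the torus, this gate is exactly the set of configurations made of an $(\ell^\star-1)\times\ell^\star$ or an $\ell^\star\times(\ell^\star-1)$ rectangular droplet of spins $1$ with a unit protuberance attached to one of its sides of length $\ell^\star$. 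By \eqref{e_Wm1} this set is precisely $\mathcal{W}(\mathbf{r},\mathbf1)=B_{\ell^\star-1,\ell^\star}^1(r,1)\cup\hat{B}_{\ell^\star,\ell^\star-1}^1(r,1)$, so $\omega\cap\mathcal{W}(\mathbf{r},\mathbf1)\neq\varnothing$.

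The point requiring the most care is that the Ising results are invoked on the periodic lattice rather than on $\mathbb{Z}^2$: one has to use that $K\le L$ are assumed much larger than $\ell^\star$ so that optimal droplets stay small and their planar local geometry governs both the communication height and the gate, ruling out any strip-type configuration that wraps around the torus; a minor bookkeeping issue is that the two orientations $B_{\ell^\star-1,\ell^\star}^1$ and $\hat{B}_{\ell^\star,\ell^\star-1}^1$ together account for the two orientations of the protuberant quasi-square on the torus. The remaining steps are a routine transcription of the classical Ising statements.
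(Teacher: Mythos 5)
Your proposal is correct and follows essentially the same route as the paper: it rewrites $H$ on $\mathcal{X}^{1r}$ via \eqref{e_ham'} and \eqref{e_nijNi}, identifies it (up to an additive constant) with the Ising Hamiltonian with positive external field so that the critical length is $\ell^\star$ and the barrier is $\Gamma^\star$, and then imports the classical lower bound on the communication height for (a) and the classical gate property of the protocritical droplet with protuberance on a long side for (b). The only difference is a harmless factor-two normalization of $(J,h)$ compared with the paper's choice $J=\frac12\gamma_1+\gamma_{1r}$, $h=2\gamma_1$; the ratio $2J/h$ and the resulting barrier coincide, so the reduction is the same.
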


\begin{remark}
Item (a) in Proposition \ref{p_gatepos}, along with the reference path constructed in Section \ref{SecA.4}, corresponds to the communication height. Moreover, item (b) implies that $\mathcal{W}(\mathbf{r},\mathbf1)$ works as a gate.
\end{remark}

\begin{proof}[Proof of Proposition \ref{p_gatepos}]
For $\sigma\in\mathcal{X}^{1r}$, we have by \eqref{e_ham'} and \eqref{e_nijNi} that
\[
H(\sigma)=H(\mathbf2)-\gamma_1\mathfrak{n}_{11}(\sigma)+\gamma_{1r}\mathfrak{n}_{1r}(\sigma)=H(\mathbf2)-2\gamma_1\mathfrak{N}_1(\sigma)+\Big(\frac12\gamma_1+\gamma_{1r}\Big)\mathfrak{n}_{1r}(\sigma).
\]
Therefore, $\mathcal{X}^{1r}$ is isomorphic to the original Ising configuration space $\{+1,-1\}^V$ via correspondence of spins $1\leftrightarrow +1$ and $r\leftrightarrow -1$, and moreover the Hamiltonian is the same as the original Ising Hamiltonian with coupling constant  $J:=\frac12\gamma_1+\gamma_{1r}$ and external field $h:=2\gamma_1$, translated by a fixed real number $H(\mathbf2)$. Therefore, provided that the Glauber transitions happen inside the restricted set $\mathcal{X}^{1r}$, we may refer to the previous well-known results. Item (a) is equivalent to the lower bound of the communication height, which is provided in \cite[Theorem 3]{neves1991critical}. Moreover, item (b) is equivalent to saying that the collection of critical configurations in the Ising model is a gate for the metastable transition. This is also a very classic result which was first proved in \cite[Theorem 5.10]{manzo2004essential}.
\end{proof}

Similarly, we argue that a $23$-path behaves in the same way as the original Ising path with zero external field. Recall the sets defined in Section \ref{Sec3.3}.

\begin{proposition}[Gate property 2]\label{p_gatezero}
Suppose that $\omega=(\omega_n)_{n=0}^N$ is a $23$-path from $\mathbf2$ to $\mathbf3$.
\begin{enumerate}
\item[\emph{(a)}] $\Phi_\omega\ge H(\mathbf2)+\Gamma^\star$.
\item[\emph{(b)}] Suppose that $\Phi_\omega=H(\mathbf2)+\Gamma^\star$. Take
\[
\mathscr{A}\in\big\{\mathscr{W}_j^h(\mathbf2,\mathbf3)\big\}_{j,h}\cup\big\{\mathscr{Q}(\mathbf2,\mathbf3),\mathscr{P}(\mathbf2,\mathbf3),\mathscr{P}(\mathbf3,\mathbf2),\mathscr{Q}(\mathbf3,\mathbf2)\big\}\cup\big\{\mathscr{H}_i(\mathbf2,\mathbf3)\big\}_i\cup\big\{\mathscr{H}_i(\mathbf3,\mathbf2)\big\}_i,
\]
where the collections are over all $2\le j\le L-3$, $1\le h\le K-1$ and $1\le i\le K-3$. Then,
\[
\omega\cap\mathscr{A}\ne\varnothing.
\]
\end{enumerate}
\end{proposition}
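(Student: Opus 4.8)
The argument rests on the reduction announced at the start of this section. Since a $23$-path stays inside $\mathcal{X}^{23}$, none of the configurations along it carries a spin $1$, so by \eqref{e_ham'} its energy is
\[
H(\sigma)=H(\mathbf2)+\gamma_{23}\,\mathfrak{n}_{23}(\sigma),
\]
and, writing $S_n:=\{v:\omega_n(v)=3\}$, the quantity $\mathfrak{n}_{23}(\omega_n)$ is exactly the number of edges of $\Lambda$ joining $S_n$ to $V\setminus S_n$. Thus $\mathcal{X}^{23}$, under the correspondence $2\leftrightarrow-1$, $3\leftrightarrow+1$, is isomorphic to the standard zero-field Ising model on the torus $\mathbb{T}_K\times\mathbb{T}_L$ with energy cost $\gamma_{23}$ per disagreeing edge; a path $\mathbf2\to\mathbf3$ becomes an Ising path between the two ground states, with $\Phi_\omega-H(\mathbf2)=\gamma_{23}\max_n\mathfrak{n}_{23}(\omega_n)$. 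Both items therefore reduce to the classical low-temperature analysis of this model, and the plan is to invoke \cite{neves1991critical} together with its torus refinements \cite{nardi2019tunneling,bet2021critical}.

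\emph{Part (a).} Along $\omega$ we have $S_0=\varnothing$, $S_N=V$, and $|S_n|$ changes by at most one per step, hence attains every value in $\{0,1,\dots,KL\}$. Therefore $\max_n\mathfrak{n}_{23}(\omega_n)\ge\max_{0\le k\le KL}\Theta(k)$, where $\Theta(k)$ denotes the edge-isoperimetric profile of $\mathbb{T}_K\times\mathbb{T}_L$. A direct computation — or \cite[Theorem 3]{neves1991critical} transferred to the torus, cf.~\cite{nardi2019tunneling} — gives $\max_k\Theta(k)=2(\min\{K,L\}+1)=2(K+1)$, the maximiser being a full-height strip carrying a single unit protuberance. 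Hence $\Phi_\omega\ge H(\mathbf2)+2(K+1)\gamma_{23}=H(\mathbf2)+\Gamma^\star$ by \eqref{e_consts} (which uses Assumption \ref{assump}-\textbf{B}).

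\emph{Part (b).} Assume $\Phi_\omega=H(\mathbf2)+\Gamma^\star$, equivalently $\mathfrak{n}_{23}(\omega_n)\le 2(K+1)$ for all $n$, and fix $\mathscr{A}$ from the displayed list. First, a routine edge-boundary count shows that every configuration in each of the sets $\mathscr{H}_i(\mathbf r,\mathbf s)$, $\mathscr{Q}(\mathbf r,\mathbf s)$, $\mathscr{P}(\mathbf r,\mathbf s)$, $\mathscr{W}_j^h(\mathbf r,\mathbf s)$ has $\mathfrak{n}_{23}=2(K+1)$; in particular each $\mathscr{A}$ consists of genuine minimal saddles, and their union is precisely the saddle set $\mathcal{W}(\mathbf2,\mathbf3)$ of \eqref{e_Wmm'}. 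It then remains to show that $\omega$ meets each individual slice. For the ``bulk'' slices $\mathscr{W}_j^h$ with $2\le j\le L-3$, $1\le h\le K-1$, this is immediate from the sharp (uniqueness-up-to-symmetry) form of the edge-isoperimetric inequality on the torus: among sets of cardinality $jK+h$ the only ones whose edge-boundary does not exceed $2(K+1)$ are, up to translations and reflections, the full-height strips of width $j$ decorated by a bar of length $h$ — that is, exactly the configurations of $\mathscr{W}_j^h$ (with the transposed shape $\hat B$ additionally admissible when $K=L$); since $|S_n|$ visits the value $jK+h$ while $\mathfrak{n}_{23}(\omega_n)\le2(K+1)$ there, $\omega$ lies in $\mathscr{W}_j^h$ at that step. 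For the ``early'' slices $\mathscr{H}_i$ ($1\le i\le K-3$), $\mathscr{Q}$, $\mathscr{P}$ and their $(\mathbf3,\mathbf2)$ mirror images the cardinality no longer pins down the shape — a small droplet can still be close to square — so one instead tracks the first instant at which $S_n$ (resp.\ its complement, for the mirror images) contains a full column of $\Lambda$, and, running backwards and forwards from that instant, uses the isoperimetric bookkeeping to show that at the corresponding bottleneck $S_n$ is a single column, or a column missing one site, possibly carrying a short bar; the way $\mathscr{H}_i$ bundles $B_{1,K}^i$ together with the family $\{B_{1,K-1}^j\}_{j>i}$ is designed precisely so that every admissible route through this phase meets $\mathscr{H}_i$. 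This last, genuinely model-dependent combinatorial step — which coincides with the identification of the minimal gates for the zero-field Ising model in \cite{bet2021critical} (see also \cite{nardi2019tunneling}) — is where the real work lies; everything else is isoperimetry combined with the unit-step (continuity) property of $n\mapsto|S_n|$.
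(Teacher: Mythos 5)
Your reduction of a $23$-path to a zero-external-field Ising path via $H(\sigma)=H(\mathbf2)+\gamma_{23}\mathfrak{n}_{23}(\sigma)$ is exactly the paper's argument, and your part (a) is a correct (if slightly more hands-on) version of what the paper gets by quoting \cite[Proposition 2.6]{nardi2019tunneling}. The problem is in part (b), where the paper simply invokes \cite[Theorem 3.3]{bet2021critical}, while you try to supply an argument and the argument you supply is flawed. Your ``immediate'' step for the bulk slices rests on the claim that among sets of cardinality $jK+h$ the only ones with edge boundary at most $2(K+1)$ are full-height strips of width $j$ with a bar of length $h$. This is false whenever $jK+h$ is small compared to $(K+1)^2/4$ (and, by complementation, when it is close to $KL$): for instance, with $j=2$ and $h=K-6$, a non-wrapping $3\times(K-2)$ rectangle has exactly $jK+h=3K-6$ sites and boundary $2(3+(K-2))=2(K+1)$, so at the step where $|S_n|=jK+h$ the path may well sit on such a droplet rather than on an element of $\mathscr{W}_2^{K-6}(\mathbf2,\mathbf3)$. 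Hence hitting $\mathscr{W}_j^h$ does not follow from the unit-step property of $n\mapsto|S_n|$ plus isoperimetry alone; one needs the genuine path analysis (e.g.\ tracking when the growing cluster first wraps the torus and showing that, under the constraint $\mathfrak{n}_{23}\le 2(K+1)$, the subsequent evolution is forced through every strip-plus-bar stage), which is precisely the content of the cited theorem.

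For the remaining slices $\mathscr{H}_i$, $\mathscr{Q}$, $\mathscr{P}$ you explicitly concede that ``the real work lies'' there and point back to \cite{bet2021critical}; so as written your part (b) is either incomplete (the model-dependent combinatorial core is described but not carried out) or incorrect (where you do give an argument, the uniqueness claim fails). If your intention was ultimately to invoke \cite[Theorem 3.3]{bet2021critical} for all slices, that is legitimate and coincides with the paper's proof, but then the intermediate ``sharp isoperimetric uniqueness'' claim should be deleted rather than asserted, since in the stated generality ($2\le j\le L-3$) it is not true.
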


\begin{proof}
We may use the same argument as in the proof of Proposition \ref{p_gatepos}, with the modification that here we identify $\mathcal{X}^{23}$ with the Ising model with zero external field \cite{bet2021critical,kim2021metastability,nardi2019tunneling}. This is possible since we have an alternative expression of energy for $\sigma\in\mathcal{X}^{23}$, which is
\[
H(\sigma)=H(\mathbf2)+\gamma_{23}\mathfrak{n}_{23}(\sigma).
\]
Thus, item (a) is equivalent to \cite[Proposition 2.6]{nardi2019tunneling} and item (b) is proved in \cite[Theorem 3.3]{bet2021critical}.
\end{proof}

% SEC: Proofs
\section{Proofs}\label{Sec6}
%

% SUBSEC: Communication height
\subsection{Communication height}\label{Sec6.1}

In this subsection we prove Theorem \ref{t_comheight}. We divide the proof into three propositions. Specifically, Proposition \ref{p_123UB} establishes the upper bound $\Gamma^\star+H(\mathbf2)$ for the communication heights, and Propositions \ref{p_21LB} and \ref{p_23LB} establish the matching lower bound.

\begin{proposition}\label{p_123UB}
It holds that $\Gamma(\mathbf{2},\mathbf{1})\le\Gamma^{\star}$, $\Gamma(\mathbf{3},\mathbf{1})\le\Gamma^{\star}$ and $\Gamma(\mathbf2,\mathbf3)\le\Gamma^{\star}$.
\end{proposition}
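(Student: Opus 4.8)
The plan is to exhibit explicit reference paths realizing each of the three communication heights, and to check that their height never exceeds $\Gamma^\star + H(\mathbf 2)$. Since $H(\mathbf 2) = H(\mathbf 3)$ by the symmetry \eqref{e_Jii}, it suffices to construct a $23$-path $\omega \colon \mathbf 2 \to \mathbf 3$ with $\Phi_\omega \le H(\mathbf 2) + \Gamma^\star$, and a $12$-path $\omega' \colon \mathbf 2 \to \mathbf 1$ with $\Phi_{\omega'} \le H(\mathbf 2) + \Gamma^\star$; the bound $\Gamma(\mathbf 3, \mathbf 1) \le \Gamma^\star$ then follows from the $2 \leftrightarrow 3$ symmetry applied to $\omega'$. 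Restricting to two-spin paths is natural here because, as established in Section \ref{Sec5}, a $1r$-path behaves like an Ising path with field $h = 2\gamma_1$ and coupling $J = \tfrac12\gamma_1 + \gamma_{1r}$, while a $23$-path behaves like an Ising path with zero field and coupling $\gamma_{23}$.

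First I would treat $\Gamma(\mathbf 2, \mathbf 1) \le \Gamma^\star$. Using the isomorphism of $\mathcal X^{12}$ with the standard Ising configuration space (spin $1 \leftrightarrow +1$, spin $2 \leftrightarrow -1$) noted in the proof of Proposition \ref{p_gatepos}, the classical Ising nucleation path — grow a droplet of spin $1$ in the sea of spin $2$ through quasi-squares of side roughly $\ell^\star$, adding rows and protuberances one site at a time — has maximal energy exactly the Ising communication height, which in our normalization is $f_{\gamma_1}(\gamma_{12}) = \Gamma^\star$ (this is precisely the content of \cite{neves1991critical}, and matches the formula \eqref{Gammametastable} with critical length $\ell^\star = \lceil \frac{2\gamma_{12}+\gamma_1}{2\gamma_1} \rceil$). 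Importing this path verbatim gives a $12$-path from $\mathbf 2$ to $\mathbf 1$ with height $H(\mathbf 2) + \Gamma^\star$, hence $\Gamma(\mathbf 2, \mathbf 1) \le \Gamma^\star$; applying the spin exchange $2 \leftrightarrow 3$ yields $\Gamma(\mathbf 3, \mathbf 1) \le \Gamma^\star$ as well.

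Next I would treat $\Gamma(\mathbf 2, \mathbf 3) \le \Gamma^\star$. Here the relevant reference path lives in $\mathcal X^{23}$, where $H(\sigma) = H(\mathbf 2) + \gamma_{23}\,\mathfrak n_{23}(\sigma)$, so the height of a path is $\gamma_{23}$ times the maximal interface length it ever creates. The standard strip-by-strip invasion path for the zero-field Ising model on $\mathbb T_K \times \mathbb T_L$ (grow spin $3$ by filling one column of width $1$ at a time; the dominant cost is reached when a full strip of width $K$ and one extra protuberance is present, giving interface length $2(K+1)$) has height $2(K+1)\gamma_{23}$; cf. \cite{nardi2019tunneling,bet2021critical}. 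By Assumption \ref{assump}-\textbf{B} this equals $f_{\gamma_1}(\gamma_{12}) = \Gamma^\star$, so this path certifies $\Gamma(\mathbf 2, \mathbf 3) \le \Gamma^\star$.

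The only genuinely delicate point is bookkeeping: one must make sure the imported Ising reference paths are legal single-spin-flip paths in $\mathcal X$ (they are, since flipping one site of value $r$ to $s$ is an allowed Glauber move with $Q > 0$), that the energy normalizations line up — in particular that $H(\mathbf 2) + \Gamma^\star$ with $\Gamma^\star = f_{\gamma_1}(\gamma_{12})$ is exactly the maximal energy along the field-$2\gamma_1$, coupling-$\gamma_{12}+\tfrac12\gamma_1$ Ising nucleation path, which is where condition \textbf{A} ($\frac{2\gamma_{12}+\gamma_1}{2\gamma_1}\notin\mathbb Z$) guarantees a unique critical length — and that condition \textbf{B} is invoked precisely to equate the two a priori different barriers. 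I expect the matching \emph{lower} bounds (deferred to Propositions \ref{p_21LB} and \ref{p_23LB}) to be the harder direction, but for the present upper-bound proposition the construction of the two reference paths, followed by a direct computation of their heights using \eqref{e_ham'} and the formula \eqref{Gammametastable}, completes the argument.
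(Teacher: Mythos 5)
Your proposal is correct and follows essentially the same route as the paper: the paper's proof simply invokes the reference paths of Section \ref{SecA.4} (the droplet-growth $12$-path of Definition \ref{d_refpath1}, whose height is $H(\mathbf 2)+f_{\gamma_1}(\gamma_{12})=H(\mathbf2)+\Gamma^\star$, and the column-by-column $23$-path of Definition \ref{d_refpath2}, whose height is $H(\mathbf2)+2(K+1)\gamma_{23}=H(\mathbf2)+\Gamma^\star$ by Assumption \ref{assump}-\textbf{B}), which are exactly the paths you construct via the Ising isomorphism and the $2\leftrightarrow3$ symmetry.
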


\begin{proof}
The reference paths constructed in Section \ref{SecA.4} have height $\Gamma^\star+H(\mathbf2)$. From this, the upper bounds immediately follow.
\end{proof}

\begin{proposition}\label{p_21LB}
We have $\Gamma(\mathbf{2},\mathbf{1})=\Gamma(\mathbf{3},\mathbf{1})\ge\Gamma^{\star}$.
\end{proposition}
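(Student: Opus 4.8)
The plan is to reduce the lower bound $\Gamma(\mathbf2,\mathbf1)\ge\Gamma^\star$ to the Ising-type estimate in Proposition \ref{p_gatepos}-(a) by means of the projection operator $\mathscr P^{32}$. The first observation is that by the $2\leftrightarrow3$ symmetry of the Hamiltonian it suffices to treat $\Gamma(\mathbf2,\mathbf1)$. Fix any path $\omega=(\omega_0,\dots,\omega_N)$ from $\mathbf2$ to $\mathbf1$; we must show $\Phi_\omega\ge H(\mathbf2)+\Gamma^\star$. The idea is to apply $\mathscr P^{32}$ pointwise along $\omega$, producing a sequence $\omega':=(\mathscr P^{32}\omega_0,\dots,\mathscr P^{32}\omega_N)$. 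Since $\mathscr P^{32}\mathbf2=\mathbf2$ and $\mathscr P^{32}\mathbf1=\mathbf1$, the endpoints are preserved. The key point is twofold: (i) by Lemma \ref{l_P32}, $H(\mathscr P^{32}\omega_n)\le H(\omega_n)$ for every $n$, so $\Phi_{\omega'}\le\Phi_\omega$; and (ii) consecutive configurations $\mathscr P^{32}\omega_n$ and $\mathscr P^{32}\omega_{n+1}$ differ in at most one site, because $\omega_n,\omega_{n+1}$ differ in at most one site and the projection acts site-wise — it either leaves that site's discrepancy, or collapses it (when the flip was to/from spin $3$ and the projected value becomes $2$). So $\omega'$ is, after deleting repeated consecutive entries, a genuine $23$-free path, i.e.\ an $1r$-path with $r=2$, from $\mathbf2$ to $\mathbf1$ living entirely in $\mathcal X^{12}$.

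Now Proposition \ref{p_gatepos}-(a) applies to $\omega'$ (with $r=2$) and gives $\Phi_{\omega'}\ge H(\mathbf2)+\Gamma^\star$. Combining with $\Phi_\omega\ge\Phi_{\omega'}$ yields $\Phi_\omega\ge H(\mathbf2)+\Gamma^\star$. Taking the minimum over all $\omega:\mathbf2\to\mathbf1$ gives $\Phi(\mathbf2,\mathbf1)\ge H(\mathbf2)+\Gamma^\star$, i.e.\ $\Gamma(\mathbf2,\mathbf1)\ge\Gamma^\star$, and by symmetry the same holds for $\Gamma(\mathbf3,\mathbf1)$ (using $\mathscr P^{23}$ instead); the equality $\Gamma(\mathbf2,\mathbf1)=\Gamma(\mathbf3,\mathbf1)$ is then immediate from the symmetry of the energy landscape under the spin swap $2\leftrightarrow3$, which is an automorphism of $(\mathcal X,H,Q)$.

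The step I expect to require the most care is the bookkeeping in (ii): one must check that $\mathscr P^{32}$ never turns a single-site difference into a genuine two-site difference (it cannot, since it is a site-wise map), but also that after collapsing coincident consecutive configurations one still has a valid path in the sense of the connectivity matrix $Q$, i.e.\ that the surviving transitions are single-spin-flip transitions with $Q>0$. This is routine but must be stated. A secondary point worth spelling out is that $\omega'$ genuinely lies in $\mathcal X^{12}$ (no spin $3$ survives the projection), so that Proposition \ref{p_gatepos} is literally applicable. Everything else — the energy monotonicity, the endpoint preservation, the symmetry argument — is immediate from Lemma \ref{l_P32} and the definitions.
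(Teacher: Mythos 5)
Your proposal is correct and follows essentially the same route as the paper: reduce to $\Gamma(\mathbf2,\mathbf1)$ by the $2\leftrightarrow3$ symmetry, project an arbitrary path pointwise via $\mathscr P^{32}$ to obtain a $12$-path (possibly with non-updating steps) from $\mathbf2$ to $\mathbf1$, and combine Lemma \ref{l_P32} with Proposition \ref{p_gatepos}-(a). The bookkeeping issues you flag (single-site differences surviving the projection, non-updating instances) are handled in the paper exactly as you suggest.
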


\begin{proof}
By the model symmetry between spins $2$ and $3$, it suffices to prove that $\Gamma(\mathbf{2},\mathbf{1})\ge\Gamma^{\star}$. To this end, take an arbitrary path $\omega=(\omega_n)_{n=0}^N$ so that $\omega_0=\mathbf2$ and $\omega_N=\mathbf1$. Then, define (cf. \eqref{e_proj})
\[
\bar\omega_n:=\mathscr{P}^{32}\omega_n\;\;\;\;\text{for each }0\le n\le N.
\]
It holds automatically that $\bar\omega_0=\mathbf2$, $\bar\omega_N=\mathbf1$ and $\bar\omega_n\in\mathcal{X}^{12}$ (cf. \eqref{e_Xij}). Since $\omega_n$ and $\omega_{n+1}$ differ in exactly one site, $\bar\omega_n$ and $\bar\omega_{n+1}$ differ in at most one site. These observations imply that $\bar\omega=(\bar\omega_n)_{n=0}^N$ is an $12$-path (possibly with non-updating instances) from $\mathbf2$ to $\mathbf1$. Therefore, Proposition \ref{p_gatepos}-(a) implies that
\[
\Phi_{\bar\omega}\ge H(\mathbf2)+\Gamma^\star.
\]
Then, Lemma \ref{l_P32} implies that $H(\omega_n)\ge H(\bar\omega_n)$ for all $n$ and thus
\[
\Phi_\omega \ge \Phi_{\bar\omega} \ge H(\mathbf2)+\Gamma^\star.
\]
Since our choice of $\omega$ was arbitrary, we deduce that $\Gamma(\mathbf2,\mathbf1)\ge\Gamma^\star$.
\end{proof}

All it remains is to provide a lower bound for $\Gamma(\mathbf2,\mathbf3)$. Before this, we state a lemma. Recall the notion of $\mathscr{V}_n^i$ defined in \eqref{e_Vni}. Moreover, for $m\in\{2,3\}$ we define
\[
\mathcal{W}'(\mathbf{m},\mathbf1):=\hat{B}_{\ell^\star-1,\ell^\star}^1(m,1)\cup B_{\ell^\star,\ell^\star-1}^1(m,1).
\]
Intuitively, $\mathcal{W}'(\mathbf{m},\mathbf1)$ is the collection of configurations that have a protuberance of spin $1$ on a \textit{wrong} side of the rectangular $1$-cluster, in the sense that we cannot proceed further to reach $\mathbf1$ without returning to a protocritical configuration in $R_{\ell^\star-1,\ell^\star}(m,1)\cup R_{\ell^\star,\ell^\star-1}(m,1)$.

\begin{lemma}\label{l_critical}
Suppose that $\eta\in\mathscr{V}_{\ell^{\star}(\ell^{\star}-1)+1}^1$.
\begin{enumerate}
\item[\emph{(a)}] It holds that $H(\eta)-H(\mathbf2)\ge\Gamma^{\star}$.
\item[\emph{(b)}] Equality in \emph{(a)} holds if and only if $\eta\in \bigcup_{m=2}^3[\mathcal{W}(\mathbf{m},\mathbf1)\cup \mathcal{W}'(\mathbf{m},\mathbf1)]$.
\end{enumerate}
\end{lemma}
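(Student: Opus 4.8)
The plan is to transfer the bound to the two–spin sublattice $\mathcal X^{12}$ via the projection $\mathscr P^{32}$, and then to use the isoperimetric inequality on the torus together with a parity argument. For part (a), set $\bar\eta:=\mathscr P^{32}\eta$; then $\bar\eta\in\mathcal X^{12}$ and, since $\mathscr P^{32}$ does not touch spins $1$, $\mathfrak N_1(\bar\eta)=\mathfrak N_1(\eta)=\ell^{\star}(\ell^{\star}-1)+1$. By Lemma~\ref{l_P32}, $H(\bar\eta)\le H(\eta)$, so it suffices to bound $H(\bar\eta)$. For $\sigma\in\mathcal X^{12}$, combining \eqref{e_ham'} with \eqref{e_nijNi} gives
\[
H(\sigma)-H(\mathbf2)=-\gamma_1\mathfrak n_{11}(\sigma)+\gamma_{12}\mathfrak n_{12}(\sigma)=-2\gamma_1\mathfrak N_1(\sigma)+\Bigl(\tfrac12\gamma_1+\gamma_{12}\Bigr)\mathfrak n_{12}(\sigma).
\]
Since $\ell^{\star}(\ell^{\star}-1)+1$ is far smaller than $KL$, no $1$–cluster of $\bar\eta$ wraps around the torus; applying the isoperimetric inequality (cf.\ \cite[Corollary~2.5]{alonso1996three}) cluster by cluster and using subadditivity of the square root, $\mathfrak n_{12}(\bar\eta)\ge 4\sqrt{\mathfrak N_1(\bar\eta)}=4\sqrt{\ell^{\star}(\ell^{\star}-1)+1}>4\ell^{\star}-2$, and since $\mathfrak n_{12}(\bar\eta)$ is an even integer this forces $\mathfrak n_{12}(\bar\eta)\ge 4\ell^{\star}$. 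Substituting into the displayed identity and recalling \eqref{Gammametastable} yields $H(\bar\eta)-H(\mathbf2)\ge 4\ell^{\star}\gamma_{12}-2\gamma_1(\ell^{\star2}-2\ell^{\star}+1)=\Gamma^{\star}$, hence $H(\eta)-H(\mathbf2)\ge H(\bar\eta)-H(\mathbf2)\ge\Gamma^{\star}$.

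For part (b), the implication ``$\Leftarrow$'' is a direct computation: a configuration in $\mathcal W(\mathbf m,\mathbf1)\cup\mathcal W'(\mathbf m,\mathbf1)$ has a single $1$–cluster equal to an $(\ell^{\star}-1)\times\ell^{\star}$ or $\ell^{\star}\times(\ell^{\star}-1)$ rectangle carrying one unit protuberance, so $\mathfrak N_1=\ell^{\star}(\ell^{\star}-1)+1$, the interface with the sea has size $4\ell^{\star}$, and $\mathfrak n_{23}=0$; plugging these into the energy formula (using $\gamma_{13}=\gamma_{12}$ and $H(\mathbf3)=H(\mathbf2)$) gives exactly $\Gamma^{\star}$. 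For ``$\Rightarrow$'', assume equality in (a), so that every inequality above is an equality. Lemma~\ref{l_P32} gives $\mathfrak n_{23}(\eta)=0$; strict subadditivity of the square root together with the parity/integrality constraint forces $\bar\eta$ to have exactly one $1$–cluster, which is then a polyomino of area $\ell^{\star}(\ell^{\star}-1)+1$ attaining the minimal perimeter $4\ell^{\star}$. Since $\mathfrak n_{23}(\eta)=0$ and the spin-$1$ set of $\eta$ is small, its complement on the torus is connected, so $\eta$ contains only one of the spins $2$ and $3$; by the $2\leftrightarrow3$ symmetry we may take $\eta=\bar\eta\in\mathcal X^{12}$. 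It then remains to classify the polyominoes of area $\ell^{\star}(\ell^{\star}-1)+1$ with perimeter $4\ell^{\star}$ and deduce $\eta\in\mathcal W(\mathbf2,\mathbf1)\cup\mathcal W'(\mathbf2,\mathbf1)$; for the $\mathcal X^{12}$–restricted dynamics this is the classical characterization of (proto)critical droplets for Ising with positive field, which one can import through the correspondence used in Proposition~\ref{p_gatepos} or reprove by a bounding-box/$HV$–convexity case analysis.

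The delicate point — the main obstacle — is this last geometric classification of minimal-perimeter droplets: one must show the single cluster is $HV$–convex, pin down the admissible shapes of its bounding box, and check that detaching the ``extra'' cells from a quasi-square box in any way other than as one single protuberance strictly increases the perimeter, so that only the $\mathcal W$/$\mathcal W'$ shapes survive. The two secondary points that need care are the exclusion of configurations with several $1$–clusters (obtained by combining the strict square-root inequality with the evenness of the perimeter) and the bookkeeping that separates $\mathcal W(\mathbf m,\mathbf1)$ (``right'' side of the rectangle) from $\mathcal W'(\mathbf m,\mathbf1)$ (``wrong'' side) in both orientations $K<L$ and $K=L$.
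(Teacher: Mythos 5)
Your reduction via $\mathscr P^{32}$ and the energy identity on $\mathcal X^{12}$ is the same as the paper's, and in the non-wrapping case your parity argument ($\mathfrak n_{12}(\bar\eta)$ is even and exceeds $4\ell^\star-2$, hence is at least $4\ell^\star$) reproduces the paper's third case. The genuine gap in part (a) is the sentence excluding wrapping clusters: smallness of $\mathfrak N_1(\bar\eta)=\ell^\star(\ell^\star-1)+1$ relative to $KL$ does not prevent a $1$-cluster from winding around the torus; for that you would need $K>\ell^\star(\ell^\star-1)+1$, which the assumptions do not provide. Indeed, Assumption \ref{assump}-\textbf{B} together with Lemma \ref{l_fh}-(c) gives $K+1=\Gamma^\star/(2\gamma_{23})\le\gamma_{12}^2/(\gamma_1\gamma_{23})+2\gamma_{12}/\gamma_{23}$, and Assumption \ref{assump}-\textbf{C} allows $\gamma_{23}$ of order $\gamma_{12}$ (say $\gamma_{23}\approx\gamma_{12}/4$ with $\gamma_1$ small), in which case $K$ is only of order $\ell^\star$, far below $\ell^{\star2}$. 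This is exactly why the paper splits into the $1$-cross, $1$-bridge and no-bridge cases, killing the first two by separate energy estimates ($\mathfrak n_{12}\ge4\sqrt{KL-\mathfrak N_1}$, resp. $\mathfrak n_{12}\ge2K$, combined with $\mathfrak n_{11}\le2\mathfrak N_1$ and Assumptions \textbf{B}--\textbf{C}). Your argument is repairable along these lines (in the bridge case $\mathfrak n_{12}\ge2K\ge4\ell^\star$, since $K+1>2\ell^\star$ follows from \textbf{B} and \textbf{C}), but as written the step ``no cluster wraps'' fails.

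For part (b), your ``$\Leftarrow$'' computation is fine and your reduction of ``$\Rightarrow$'' (no bridge, $\mathfrak n_{23}(\eta)=0$, a single cluster of area $\ell^\star(\ell^\star-1)+1$ and perimeter $4\ell^\star$) matches the paper. But the classification you then invoke — that the only such polyominoes are a quasi-square with a single protuberance, provable by checking that detaching the extra cells in any other way increases the perimeter — is false: for $\ell^\star=5$ the full $3\times7$ rectangle has area $21=\ell^\star(\ell^\star-1)+1$ and perimeter $20=4\ell^\star$, and since the energy on $\mathcal X^{12}$ depends only on area and perimeter it sits exactly at $H(\mathbf2)+\Gamma^\star$ while lying outside $\mathcal W\cup\mathcal W'$; more generally any spanning HV-convex subset of a $p\times q$ box with $p+q=2\ell^\star$, $p\le\ell^\star-2$ and $pq\ge\ell^\star(\ell^\star-1)+1$ gives such a shape. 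Hence a bounding-box/HV-convexity case analysis cannot yield only the $\mathcal W/\mathcal W'$ shapes, and Proposition \ref{p_gatepos} cannot be imported here either, since it constrains optimal paths (a dynamical statement) rather than arbitrary configurations of $\mathscr V^1_{\ell^\star(\ell^\star-1)+1}$. To be fair, the paper's own proof of (b) is equally terse at this point (it simply asserts that equality in the isoperimetric inequality is equivalent to membership in $\bigcup_m[\mathcal W(\mathbf m,\mathbf1)\cup\mathcal W'(\mathbf m,\mathbf1)]$), so your attempt is no less complete than the printed argument; but the route you propose for closing it would not work as stated, and the ``only if'' direction needs either additional (path-based) input or a larger right-hand side.
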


\begin{proof}
(a) We abbreviate $\bar\eta:=\mathscr{P}^{32}\eta\in\mathcal{X}^{12}$, where clearly $\bar\eta\in\mathscr{V}_{\ell^\star(\ell^\star-1)+1}^1$. We divide into three cases as we did in the proof of Lemma \ref{l_P12}.
\begin{itemize}
\item If $\bar\eta$ has an $1$-cross, then we may regard $\bar\eta$ as a configuration of spins $2$ in the sea of spins $1$. Recalling \eqref{e_Ni} and \eqref{e_nij} and applying the isoperimetric inequality (cf. \cite[Corollary 2.5]{alonso1996three}), we have
\begin{align}\label{e_21LBn12}
\mathfrak{n}_{12}(\bar\eta)\ge4\sqrt{\mathfrak{N}_2(\bar\eta)}=4\sqrt{KL-\mathfrak{N}_1(\bar\eta)}=4\sqrt{KL-\ell^{\star}(\ell^{\star}-1)-1}.
\end{align}
Using the same argument as in \eqref{e_21LBn11}, it holds that $\mathfrak{n}_{11}(\bar\eta)\le 2\mathfrak{N}_1(\bar\eta)=2\ell^\star(\ell^\star-1)+2$. Thus, by \eqref{e_ham'} we have
\begin{align*}
H(\bar\eta)-H(\mathbf2)=-\gamma_1\mathfrak{n}_{11}(\bar\eta)+\gamma_{12}\mathfrak{n}_{12}(\bar\eta)\ge4\sqrt{KL-\ell^{\star2}+\ell^{\star}-1}\gamma_{12}-(2\ell^{\star2}-2\ell^{\star}+2)\gamma_1.
\end{align*}
By Assumption \ref{assump}-\textbf{C}, it can be shown that this is strictly larger than $\Gamma^{\star}=4\ell^{\star}\gamma_{12}-(2\ell^{\star2}-4\ell^{\star}+2)\gamma_1$ via a similar argument as in the first case in the proof of Lemma \ref{l_P12}. Therefore, by Lemma \ref{l_P32} we obtain
\[
H(\eta)\ge H(\bar\eta)>H(\mathbf2)+\Gamma^\star.
\]
\item If $\bar\eta$ has an $1$-bridge but no $1$-cross, then proceeding similarly to the second case in proof of Lemma \ref{l_P12}, we obtain
\begin{align*}
H(\eta)-H(\mathbf2)\ge H(\bar\eta)-H(\mathbf2)\ge2K\gamma_{12}-(2\ell^{\star2}-2\ell^{\star}+2)\gamma_1>\Gamma^\star.
\end{align*}
\item If $\bar\eta$ does not have an $1$-bridge, then all $1$-clusters of $\bar\eta$ are in the sea of spins $2$. By \eqref{e_nijNi} and since $\mathfrak{n}_{13}(\bar\eta)=0$ and $\mathfrak{N}_1(\bar\eta)=\ell^\star(\ell^\star-1)+1$, we have
\[
2\mathfrak{n}_{11}(\bar\eta)+\mathfrak{n}_{12}(\bar\eta)=4\ell^\star(\ell^\star-1)+4.
\]
Then by \eqref{e_ham'},
\begin{align*}
H(\bar\eta)-H(\mathbf2)=\gamma_{12}\mathfrak{n}_{12}(\bar\eta)-\gamma_1\mathfrak{n}_{11}(\bar\eta)= (4\ell^{\star2}-4\ell^\star+4)\gamma_{12}-\mathfrak{n}_{11}(\bar\eta)(\gamma_1+2\gamma_{12}).
\end{align*}
Since $\mathfrak{N}_1(\bar\eta)=\ell^\star(\ell^\star-1)+1$, by the isoperimetric inequality, the perimeter of the $1$-clusters is at least $4\ell^{\star}$.
This is equivalent to $\mathfrak{n}_{12}(\bar\eta)\ge4\ell^{\star}$ and thus equivalent to $\mathfrak{n}_{11}(\bar\eta)\le 2\ell^{\star2}-4\ell^\star+2$. Hence,
\begin{align*}
H(\bar\eta)-H(\mathbf2)\ge(4\ell^{\star2}-4\ell^\star+4)\gamma_{12}-(2\ell^{\star2}-4\ell^\star+2)(\gamma_1+2\gamma_{12})=\Gamma^{\star},
\end{align*}
which concludes the proof of item (a) since, by Lemma \ref{l_P32}, we have $H(\eta)\ge H(\bar\eta)$.
\end{itemize}
(b) By the proof above, the equality holds if and only if $\bar\eta$ does not have an $1$-bridge, the equality in the isoperimetric inequality holds and $H(\eta)=H(\bar\eta)$. By Lemma \ref{l_P32}, this is equivalent to
\[
\eta=\bar\eta\in \bigcup_{m=2}^3 [\mathcal{W}(\mathbf{m},\mathbf1)\cup \mathcal{W}'(\mathbf{m},\mathbf1)],
\]
which proves item (b).
\end{proof}

Now we are ready to prove the lower bound of $\Gamma(\mathbf2, \mathbf3)$.

\begin{proposition}\label{p_23LB}
It holds that $\Gamma(\mathbf{2},\mathbf{3})\ge\Gamma^{\star}$.
\end{proposition}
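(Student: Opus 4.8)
The plan is to argue by contradiction, mirroring the structure of the proof of Proposition \ref{p_21LB} but using the projection $\mathscr{P}^{12}$ in place of $\mathscr{P}^{32}$. Suppose that some path $\omega=(\omega_n)_{n=0}^N$ from $\mathbf 2$ to $\mathbf 3$ satisfies $\Phi_\omega<H(\mathbf 2)+\Gamma^\star$, so that $H(\omega_n)-H(\mathbf 2)<\Gamma^\star$ for every $n$. The first step is to control the number of spins $1$ along $\omega$. By Lemma \ref{l_critical}-(a), any configuration with exactly $\ell^\star(\ell^\star-1)+1$ spins $1$ has energy at least $H(\mathbf 2)+\Gamma^\star$, hence no $\omega_n$ belongs to $\mathscr{V}_{\ell^\star(\ell^\star-1)+1}^1$. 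Since $\mathfrak{N}_1(\omega_0)=\mathfrak{N}_1(\mathbf 2)=0$ and each Glauber step changes $\mathfrak{N}_1$ by at most one, it follows that $\mathfrak{N}_1(\omega_n)\le\ell^\star(\ell^\star-1)\le\ell^{\star2}$ for all $n$.

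The second step: since each $\omega_n$ now satisfies both hypotheses of Lemma \ref{l_P12} (namely $H(\omega_n)-H(\mathbf 2)\le\Gamma^\star$ and $\mathfrak{N}_1(\omega_n)\le\ell^{\star2}$), we obtain $H(\mathscr{P}^{12}\omega_n)\le H(\omega_n)$ for every $n$. Set $\tilde\omega_n:=\mathscr{P}^{12}\omega_n$. As in the proof of Proposition \ref{p_21LB}, consecutive $\tilde\omega_n$ differ in at most one site because consecutive $\omega_n$ do, and since $\mathscr{P}^{12}$ removes all spins $1$ while fixing $\mathbf 2$ and $\mathbf 3$, the sequence $\tilde\omega$ (after deleting repeated consecutive configurations, if any) is a $23$-path from $\mathbf 2$ to $\mathbf 3$, i.e.\ $\tilde\omega\subseteq\mathcal{X}^{23}$. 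Applying Proposition \ref{p_gatezero}-(a) to $\tilde\omega$ gives $\Phi_{\tilde\omega}\ge H(\mathbf 2)+\Gamma^\star$. On the other hand, $H(\tilde\omega_n)\le H(\omega_n)\le\Phi_\omega<H(\mathbf 2)+\Gamma^\star$ for all $n$, so $\Phi_{\tilde\omega}<H(\mathbf 2)+\Gamma^\star$, a contradiction. Hence every path from $\mathbf 2$ to $\mathbf 3$ has height at least $H(\mathbf 2)+\Gamma^\star$, that is, $\Gamma(\mathbf 2,\mathbf 3)\ge\Gamma^\star$.

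I expect the only genuinely delicate point to be the first step, namely guaranteeing that $\mathscr{P}^{12}$ is applicable at \emph{every} configuration along $\omega$. This is precisely the role of Lemma \ref{l_critical}: the count $\ell^\star(\ell^\star-1)+1$ is exactly the threshold that $\mathfrak{N}_1$ cannot cross without the energy rising to $H(\mathbf 2)+\Gamma^\star$ (the "droplet just below critical size" heuristic), which is what keeps us in the regime $\mathfrak{N}_1\le\ell^{\star2}$ required by Lemma \ref{l_P12}. Once that is in place, the remaining steps are a routine repetition of the $\mathscr{P}^{32}$ argument already carried out in Proposition \ref{p_21LB}, now combined with the zero-field Ising input of Proposition \ref{p_gatezero} instead of the positive-field one.
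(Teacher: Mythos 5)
Your proposal is correct and follows essentially the same route as the paper: projecting via $\mathscr{P}^{12}$ to a $23$-path, invoking Proposition \ref{p_gatezero}-(a), and justifying the applicability of Lemma \ref{l_P12} by using Lemma \ref{l_critical}-(a) together with the fact that $\mathfrak{N}_1$ changes by at most one per step, so the path can never reach $\ell^\star(\ell^\star-1)+1$ spins $1$ without its energy reaching $H(\mathbf 2)+\Gamma^\star$. The only difference is cosmetic: you establish the bound on $\mathfrak{N}_1$ before projecting, whereas the paper projects first and proves the same bound as a claim afterwards.
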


\begin{proof}
Assume by contradiction that there exists a path $(\omega_{n})_{n=0}^{N}$
from $\mathbf{2}$ to $\mathbf{3}$ such that $H(\omega_{n})-H(\mathbf2)<\Gamma^{\star}$
for all $n$. Recall $\mathscr{P}^{12}$ from \eqref{e_proj} and define
\[
\tilde\omega_n:=\mathscr{P}^{12}\omega_n.
\]
It is clear that $\tilde\omega_0=\mathbf{2}$ and $\tilde\omega_N=\mathbf{3}$. Thus, $(\tilde\omega_{n})_{n=0}^{N}$ is a $23$-path (possibly with some non-updating instances) from $\mathbf{2}$ to $\mathbf{3}$. Then, Proposition \ref{p_gatezero}-(a) indicates that
\[
\max_{0\le n\le N}H(\tilde\omega_n)\ge H(\mathbf2)+\Gamma^\star.
\]
To conclude the proof, it is enough to show that $\mathfrak{N}_1(\omega_n)\le\ell^{\star2}$ for all $n$. Indeed, if the claim holds then we may apply Lemma \ref{l_P12} to each $\omega_n$, so that along with the last display we have
\[
\max_{0\le n\le N}H(\omega_n)\ge \max_{0\le n\le N}H(\tilde\omega_n)\ge H(\mathbf2)+\Gamma^\star.
\]
This contradicts the original assumption.\medskip{}

It remains to prove the claim. We prove that $\mathfrak{N}_1(\omega_n)\le\ell^\star(\ell^\star-1)$ for all $n$, which is clearly sufficient. If not, $\mathfrak{N}_1(\omega_{M})\ge\ell^\star(\ell^\star-1)+1$ for some $M$. Since $\mathfrak{N}_1(\omega_0)=0$ and $|\mathfrak{N}_1(\omega_{n+1})-\mathfrak{N}_1(\omega_n)|\le1$ for any $n$, there exists $n_0$ such that $\mathfrak{N}_1(\omega_{n_0})=\ell^\star(\ell^\star-1)+1$. Then by Lemma \ref{l_critical}-(a), $H(\omega_{n_0})\ge H(\mathbf 2)+\Gamma^\star$ which contradicts our assumption. Thus, we conclude the proof of the claim.
\end{proof}

\begin{proof}[Proof of Theorem \ref{t_comheight}]
The statement now follows directly from Propositions \ref{p_123UB}, \ref{p_21LB} and \ref{p_23LB}.
\end{proof}

% SUBSEC: Stability level
\subsection{Stability level}\label{Sec6.2}

In this subsection, we prove Proposition \ref{p_stablev}, i.e., we estimate the stability level of configurations other than $\mathbf1$, $\mathbf2$ and $\mathbf3$. First we introduce some notation.

\begin{itemize}
\item For $\sigma\in\mathcal{X}$ and $v\in V$, we denote by \textit{tile centered at $v$}, or \textit{$v$-tile}, the collection of five vertices consisting of $v$ and its four nearest neighbors.
\item A $v$-tile is \textit{stable} for $\sigma$ if any spin flip on $v$ from $\sigma(v)$ to any other spin does not decrease the energy.
\item Moreover, we say that a stable $v$-tile is \textit{strictly stable} if any spin flip on $v$ from $\sigma(v)$ to any other spin strictly increases the energy.
\end{itemize}

First, we can characterize all the stable and strictly stable tiles. Since the following lemma can be proved by simple algebra, we omit the proof.

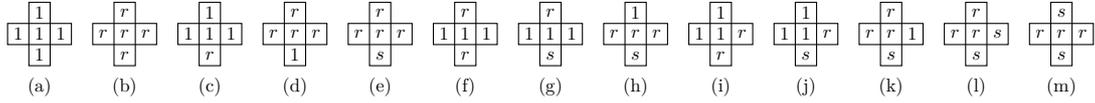
\begin{figure}[ht!]
\centering
\begin{tikzpicture}[scale=0.7,transform shape]
\foreach \i in {0,1.6,3.2,4.8,6.4,8,9.6,11.2,12.8,14.4,16,17.6,19.2}
\draw (\i,0)rectangle(\i+1.2,0.4);
\foreach \i in {0,1.6,3.2,4.8,6.4,8,9.6,11.2,12.8,14.4,16,17.6,19.2}
\draw (\i+0.4,-0.4)rectangle(\i+0.8,0.8);

\draw (0.6,-0.5) node[below] {(a)}(2.2,-0.5) node[below] {(b)}(3.8,-0.5) node[below] {(c)}(5.4,-0.5) node[below] {(d)}(7,-0.5) node[below] {(e)}(8.6,-0.5) node[below] {(f)} (10.2,-0.5) node[below] {(g)} (11.8,-0.5) node[below] {(h)} (13.4,-0.5) node[below] {(i)} (15,-0.5) node[below] {(j)} (16.6,-0.5) node[below] {(k)} (18.2,-0.5) node[below] {(l)} (19.8,-0.5) node[below] {(m)};

\foreach \i in {0.2,0.6,1,3.4,3.8,4.2,8.2,8.6,9,9.8,10.2,10.6,13,13.4,14.6,15,17}\draw (\i,0.2) node {$1$};
\foreach \i in {0.6,3.8,11.8,13.4,15}\draw (\i,0.6) node {$1$};
\foreach \i in {0.6,5.4} \draw (\i,-0.2) node {$1$};
\foreach \i in {1.8,2.2,2.6,5,5.4,5.8,6.6,7,7.4,11.4,11.8,12.2,13.8,15.4,16.2,16.6,17.8,18.2,19.4,19.8,20.2}\draw (\i,0.2) node {$r$};
\foreach \i in {2.2,5.4,7,8.6,10.2,16.6,18.2}\draw (\i,0.6) node {$r$};
\foreach \i in {2.2,3.8,8.6,13.4} \draw (\i,-0.2) node {$r$};
\foreach \i in {18.6}\draw (\i,0.2) node {$s$};
\foreach \i in {19.8}\draw (\i,0.6) node {$s$};
\foreach \i in {7,10.2,11.8,15,16.6,18.2,19.8} \draw (\i,-0.2) node {$s$};
\end{tikzpicture}
\caption{\label{fig5}All possible stable tiles for $r,s\in\{2,3\}$, $r\neq s$. The tiles are depicted up to rotations and reflections.}
\end{figure}\FloatBarrier

\begin{lemma}[Stable and strictly stable tiles]\label{l_stable}
Let $\sigma\in\mathcal{X}$ and $v\in V$. Then, $v$-tile is strictly stable for $\sigma$ if and only if the following statements hold.
\begin{itemize}
\item If $\sigma(v)=1$, then $v$ has at least two neighbors with spin $1$, as in Figure \ref{fig5}-\emph{(a)(c)(f)(g)(i)(j)}.
\item If $\sigma(v)=r\in\{2,3\}$, then $v$ has either at least three neighbors with spin $r$, or exactly two neighbors with spin $r$ and one neighbor with spin $1$, as in Figure \ref{fig5}-\emph{(b)(d)(e)(h)(k)}.
\end{itemize}
Moreover, $v$-tile is stable but not strictly stable for $\sigma$ if and only
if $\sigma(v)=r\in\{2,3\}$ and $v$ has exactly two neighbors with
spin $r$ and no neighbor with spin $1$, as in Figure \ref{fig5}-\emph{(l)(m)}.
\end{lemma}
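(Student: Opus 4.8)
The plan is to turn the statement into a short, finite verification. The key observation is that a spin flip at $v$ affects only the four edges of $\Lambda$ incident to $v$, so its energy cost is determined entirely by $\sigma(v)$ and by how many neighbours of $v$ carry each spin value. I would therefore introduce the counts $n_i:=|\{w\sim v:\sigma(w)=i\}|$, with $n_1+n_2+n_3=4$, and compute, using the representation \eqref{e_ham'} of the Hamiltonian together with $\gamma_{12}=\gamma_{13}$ (cf. \eqref{e_gammadef}), the energy cost of every admissible single flip at $v$ as an affine function of $(n_1,n_2,n_3)$. A direct count of the $11$-, $12$-, $13$- and $23$-edges that are created and destroyed gives: a flip $1\to r$ with $\{r,s\}=\{2,3\}$ costs $n_1(\gamma_1+\gamma_{12})-n_r\gamma_{12}+n_s(\gamma_{23}-\gamma_{12})$; a flip $r\to 1$ with $r\in\{2,3\}$ costs $-n_1(\gamma_1+\gamma_{12})+n_r\gamma_{12}+n_s(\gamma_{12}-\gamma_{23})$; and a flip $r\to s$ with $r,s\in\{2,3\}$, $r\neq s$, costs $(n_r-n_s)\gamma_{23}$.

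By definition the $v$-tile is stable (resp.\ strictly stable) if and only if the one or two relevant expressions above are $\ge 0$ (resp.\ $>0$). Since $n_1+n_2+n_3=4$, only finitely many triples $(n_1,n_2,n_3)$ arise, and the $2\leftrightarrow 3$ symmetry of the Hamiltonian reduces the work to the two cases $\sigma(v)=1$ and $\sigma(v)=2$; I would then simply tabulate the signs. The sign of each expression is pinned down by elementary consequences of Assumption \ref{assump}-\textbf{C}, such as $2\gamma_{12}>\gamma_1$, $2\gamma_{12}>4\gamma_{23}$ and $2\gamma_{12}>\gamma_1+3\gamma_{23}$, all immediate from $2\gamma_{12}\ge 4\gamma_{23}+\gamma_1$ together with $\gamma_1,\gamma_{23}>0$. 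The strict positivity of $\gamma_1$ and $\gamma_{23}$ is also exactly what upgrades ``$\ge$'' to ``$>$'', and hence isolates the non-strict borderline: for $\sigma(v)=r\in\{2,3\}$ the flip $r\to 1$ is never energy-neutral (under \textbf{C}), while the flip $r\to s$ has zero cost precisely when $n_r=n_s=2$, i.e.\ $n_1=0$, which one checks is the unique stable configuration among $n_r=n_s$. Reading off the triples that survive then reproduces exactly the tiles drawn (up to rotations and reflections) in Figure \ref{fig5}.

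I do not expect a genuine obstacle here: the whole argument is mechanical, which is why the paper leaves it to the reader. The only points demanding a little care are getting the cost formulas right — in particular respecting the asymmetry created by spin $1$ being the most stable, so that the flips $1\to r$ and $r\to 1$ are \emph{not} mirror images of one another — and checking that each inequality invoked in the sign analysis is strict exactly where strictness is asserted, which is guaranteed by the positivity in \eqref{e_gammadef}.
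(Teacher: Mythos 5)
Your proposal is correct: the flip-cost formulas you derive from \eqref{e_ham'} (using $\gamma_{13}=\gamma_{12}$) check out, and the finite sign-tabulation over the triples $(n_1,n_2,n_3)$ with the stated consequences of Assumption \ref{assump}-\textbf{C} does reproduce exactly the classification in Lemma \ref{l_stable}, including that $r\to1$ flips are never energy-neutral while $r\to s$ is neutral exactly when $n_r=n_s$, with $n_r=n_s=2$ the only stable such case. This is precisely the ``simple algebra'' argument the paper alludes to and omits, so there is nothing further to compare.
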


Given a spin configuration $\sigma$, an edge $e=\{x,y\}\in E$ is called an \textit{interface} of $\sigma$ if $\sigma(x)\ne\sigma(y)$. Moreover, two different clusters $C_1$ and $C_2$ of $\sigma$ are said to \textit{interact with each other} if there exists $v\notin C_{1}\cup C_{2}$
such that $v$ is connected to both $C_{1}$ and $C_{2}$, i.e.,
\begin{align}\label{e_interact}
\exists w_1\in C_1\text{ and }\exists w_2\in C_2\;\;\;\;\text{such that}\;\;\;\;\{v,w_1\}\in E\text{ and }\{v,w_2\}\in E.
\end{align}
Moreover, we define the set of local minima $\mathscr{M}$ as
\begin{align}\label{e_M}
\mathscr{M}:=\big\{\sigma\in\mathcal{X}:\ H(\sigma)<H(\sigma')\text{ for all }\sigma'\ne\sigma\text{ with }P_\beta(\sigma,\sigma')>0\big\},
\end{align}
and the set of plateaux $\bar{\mathscr{M}}$ as 
\begin{align}\label{e_Mbar}
\bar{\mathscr{M}}:=\bigcup_{D\text{ stable plateaux}} D.
\end{align}
Here, a subset $D\subset\mathcal{X}$ is a \textit{stable pleateau} if it is a maximal connected subset of equal energy, so that for any $\sigma\in D$ and $\sigma'\in\partial D$ it holds that $H(\sigma)<H(\sigma')$. It is obvious by definition that
\begin{align}\label{e_Mcond}
\sigma\in\mathscr{M}\;\;\;\;\Leftrightarrow\;\;\;\;\text{every tile is strictly stable for }\sigma.
\end{align}
and
\begin{align}\label{e_MMbarcond}
\sigma\in\mathscr{M}\cup\bar{\mathscr{M}}\;\;\;\;\Rightarrow\;\;\;\;\text{every tile is stable for }\sigma.
\end{align}

First, we prove that all the $1$-clusters of the configurations in $\mathscr{M}\cup\bar{\mathscr{M}}$ must be rectangles.

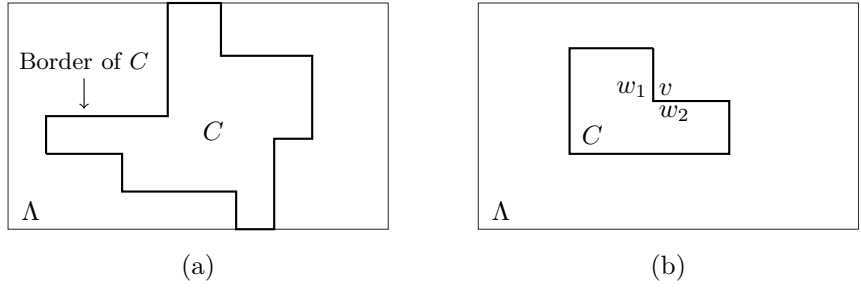
\begin{figure}
\centering
\begin{tikzpicture}%[scale=0.8,transform shape]
\draw[black!80!white] (0,0) rectangle (5,3);
\draw (0.3,0.25) node {$\Lambda$};
\draw[black,thick] (0.5,1)--(1.5,1)--(1.5,0.5)--(3,0.5)--(3,0)--(3.5,0)--(3.5,1.2)--(4,1.2)--(4,2.3)--(2.8,2.3)--(2.8,3)--(2.1,3)--(2.1,1.5)--(0.5,1.5)--(0.5,1);
\draw (2.7,1.3) node {$C$};
\draw[<-] (1,1.6)--(1,2) node[above] {\small{Border of $C$}};
\draw (2.5,-0.5) node {{(a)}};
\end{tikzpicture}\ \ \ \ \ \ \ \ \ \ 
\begin{tikzpicture}%[scale=0.8,transform shape]
\draw[black!80!white] (0,0) rectangle (5,3);
\draw[black,thick] (2.3,2.4)--(1.2,2.4) -- (1.2,1)--(3.3,1)--(3.3,1.7)--(2.3,1.7)--(2.3,2.4);
\draw (2.35,1.85) node[left] {$w_1$};
\draw (2.25,1.54) node[right] {$w_2$};
\draw (2.25,1.85) node[right] {$v$};
\draw (1.5,1.25) node {$C$};
\draw (0.3,0.25) node {$\Lambda$};
\draw (2.5,-0.5) node {{(b)}};
\end{tikzpicture}
\caption{\label{fig6} Illustrations regarding the proof of Lemma \ref{l_1cluster}.}
\end{figure}

\begin{lemma}
\label{l_1cluster}Suppose that $\sigma\in\mathscr{M}\cup\bar{\mathscr{M}}$. Then, each $1$-cluster of $\sigma$ is a rectangle.
\end{lemma}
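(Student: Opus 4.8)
The plan is to combine the tile classification in Lemma \ref{l_stable} with an elementary geometric fact about subsets of the torus. First I would unpack the hypothesis: since $\sigma\in\mathscr M\cup\bar{\mathscr M}$, \eqref{e_MMbarcond} tells us that every tile of $\sigma$ is stable, so Lemma \ref{l_stable} applies at every vertex. Reading off Figure \ref{fig5}, this gives two local facts. First, every vertex with spin $1$ has at least two neighbours with spin $1$ (cases (a),(c),(f),(g),(i),(j)). Second --- and this is the one I will actually use --- every vertex with spin in $\{2,3\}$ has at most one neighbour with spin $1$, since in each of the remaining cases (b),(d),(e),(h),(k),(l),(m) the centre has at most one spin-$1$ neighbour. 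Equivalently, no vertex $v$ with $\sigma(v)\neq1$ is adjacent to two or more vertices of a single $1$-cluster.

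Next, fix a $1$-cluster $C$ of $\sigma$ and assume, towards a contradiction, that $C$ is not a rectangle, where ``rectangle'' is understood as in Section \ref{Sec3.3} (so that strips wrapping around $\Lambda$ are allowed). The geometric input I would invoke is the following: if $C\subsetneq V$ is a non-empty connected subset of the torus $\mathbb{T}_K\times\mathbb{T}_L$ that is not a rectangle, then $C$ has a ``notch'', i.e.\ there are $w_1,w_2\in C$ and $v\in V\setminus C$ with $\{v,w_1\},\{v,w_2\}\in E$, where $w_1$ is a horizontal and $w_2$ a vertical neighbour of $v$ --- precisely the picture drawn in Figure \ref{fig6}(b); the same conclusion holds (producing such a $v$ inside the enclosed region) if $C$ were to have a hole. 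I would prove this by the standard corner count: the rectilinear boundary of a polyomino with no reentrant ($270^\circ$) corner has exactly four convex corners and no holes by a discrete Gauss--Bonnet argument, hence bounds a rectangle; the cases in which $C$ winds around one or both cycles of the torus are treated separately and reduce to $C$ being a strip. This is the step that demands the most care, especially the bookkeeping for the wrapping cases and the reduction of a ``hole'' of $C$ to the notch above.

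Finally, I would close the loop. Since $v$ is a neighbour of $w_1\in C$ and $C$ is a \emph{maximal} connected set of spin-$1$ vertices, $\sigma(v)=1$ would force $v\in C$; hence $\sigma(v)\in\{2,3\}$. But then $v$ has the two distinct spin-$1$ neighbours $w_1$ and $w_2$, contradicting the second local fact derived above. Therefore $C$ must be a rectangle, and since $C$ was an arbitrary $1$-cluster of $\sigma$, the lemma follows. (I note in passing that the first local fact, not needed here, additionally rules out $1$-clusters of width one, but this is immaterial to the statement.)
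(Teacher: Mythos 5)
Your proposal is correct and follows essentially the same route as the paper: use \eqref{e_MMbarcond} and Lemma \ref{l_stable} to see that a vertex $v$ with $\sigma(v)\in\{2,3\}$ cannot have two spin-$1$ neighbours, then locate a notch (reentrant corner) of a non-rectangular $1$-cluster, whose outside vertex $v$ (not spin $1$ by maximality of the cluster) violates this, exactly as in Figure \ref{fig6}(b). The only difference is that you spell out, via a corner-counting argument with the wrapping/strip cases treated separately, the geometric fact that a non-rectangular connected cluster must have such a notch, which the paper simply asserts.
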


\begin{proof}
We fix $\sigma\in\mathscr{M}\cup\bar{\mathscr{M}}$ so that by \eqref{e_MMbarcond}, all tiles of $\sigma$ must be of the form in Figure \ref{fig5}. We fix an $1$-cluster $C$ of $\sigma$. Consider the \textit{border} of $C$, which is defined as
\[
\big\{\{v,w\}\in E:\ v\in C\text{ and }w\in \partial C\big\}.
\]
We refer to Figure \ref{fig6}-(a) for an illustration. If $C$ is not a rectangle, then there exists at least one internal angle of $\frac{3}{2}\pi$ in the border, as one can see in Figure \ref{fig6}-(b) where $w_1,w_2\in C$ and $v\notin C$. This implies that $\sigma(w_1)=\sigma(w_2)=1$ whereas $\sigma(v)\ne1$. Then by Lemma \ref{l_stable}, $v$-tile is not stable for $\sigma$. This contradicts the fact that $\sigma\in\mathscr{M}\cup\bar{\mathscr{M}}$ due to \eqref{e_MMbarcond}. Therefore, we conclude that $C$ is a rectangle.
\end{proof}

\begin{remark}
It would possible to investigate further the equivalent conditions on the $1$-clusters for a configuration to belong to $\mathscr{M}\cup\bar{\mathscr{M}}$. However, we do not pursue this further because it is not required for proving main results.
\end{remark}

Now, we are ready to prove Proposition \ref{p_stablev}.

\begin{proof}[Proof of Proposition \ref{p_stablev}]
To calculate the stability level of $\eta\in\mathcal{X} \setminus \{\mathbf1,\mathbf2,\mathbf3\}$, suppose first that $\eta\notin\mathscr{M}\cup\bar{\mathscr{M}}$. Then by definition (cf. \eqref{e_M} and \eqref{e_Mbar}), there exists $\eta'\in\mathcal{X}$ such that $H(\eta')<H(\eta)$ and $P_\beta(\eta,\eta')>0$. Thus, clearly $V_\eta=0$. Therefore, we may assume that $\eta\in(\mathscr{M}\cup\bar{\mathscr{M}})\setminus\{\mathbf1,\mathbf2,\mathbf3\}$. It suffices to prove that
\[
V_\eta\le\max\big\{2\gamma_{12}-\gamma_1,2(\ell^\star-1)(\gamma_{23}+\gamma_1),2\gamma_{23}\big\}
\]
since clearly the maximum among the constants in the right-hand side is $2(\ell^\star-1)(\gamma_{23}+\gamma_1)$. We divide into two cases.\medskip{}

\noindent \textbf{(Case 1: $\eta$ has an $1$-cluster)} In this
case, by Lemma \ref{l_1cluster} $\eta$ has an $1$-cluster
$C$ which is a rectangle. Since $\eta\ne\mathbf1$, we have $C\subsetneq V$.
\begin{itemize}
\item \textbf{(If $C$ has a side of length $\ell\ge\ell^\star$)} Since $C\subsetneq V$, we can define a path $\omega=(\omega_0,\dots,\omega_{\ell})$, where $\omega_0=\eta$ and $\omega_{\ell}=\bar\eta$, that flips consecutively those spins adjacent to a side of $C$ of length $\ell$ to spin $1$. Then, a simple computation shows that
\begin{align}\label{e_reccase21}
H(\omega_1)-H(\eta)\le2\gamma_{12}-\gamma_1\;\;\;\;\text{and}\;\;\;\;H(\omega_i)-H(\omega_{i-1})\le-2\gamma_1\text{ for any }2\le i\le\ell.
\end{align}
Thanks to \eqref{e_reccase21}, we have that 
\[
H(\bar\eta)-H(\eta)\le2\gamma_{12}-(2\ell^\star-1)\gamma_1<0.
\]
The last inequality holds since $\ell^\star=\lceil\frac{2\gamma_{12}+\gamma_1}{2\gamma_1}\rceil>\frac{2\gamma_{12}+\gamma_1}{2\gamma_1}$ by Assumption \ref{assump}-\textbf{A}. Moreover, the height of $\omega$ is attained by either $\omega_0=\eta$ or $\omega_1$, which has energy at most $H(\eta)+(2\gamma_{12}-\gamma_1)$. Therefore, we deduce in this case that $V_\eta\le2\gamma_{12}-\gamma_1$.
\item \textbf{(If all sides of $C$ have lengths smaller than $\ell^\star$)} Suppose that $C$ is a rectangle $p\times q$. Since $|\partial C|=2(p+q)$ and all spins on $\partial C$ are either $2$ or $3$, $\mathfrak{N}_2(\partial C)+\mathfrak{N}_3(\partial C)=2(p+q)$. Without loss of generality, we assume that
\begin{align}\label{e_N2partialC}
\mathfrak{N}_2(\partial C)\ge p+q\;\;\;\;\text{and}\;\;\;\;\mathfrak{N}_3(\partial C)\le p+q.
\end{align}
We define a path $\omega=(\omega_n)_{n=0}^{pq}$ from $\omega_0=\eta$ to $\omega_{pq}=:\eta'$ as follows: starting from the upper-left corner of $C$, we update each spin $1$ in $C$ to spin $2$ consecutively in a clockwise manner, until all the spins $1$ in $C$ are updated to spins $2$, see Figure \ref{fig7}.

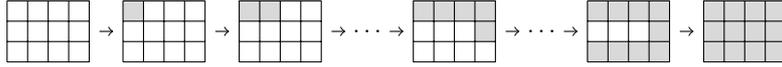
\begin{figure}[ht]
    \centering
        \begin{tikzpicture}[scale=0.9,transform shape]
    \draw[step=0.3cm,color=black] (0,0) grid (1.2,0.9);
    \draw[->] (1.35,0.45) -- (1.55,0.45);
    \end{tikzpicture}
    \begin{tikzpicture}[scale=0.9,transform shape]
    \fill[black!15!white] (0,0.6) rectangle (0.3,0.9);
    \draw[step=0.3cm,color=black] (0,0) grid (1.2,0.9);
    \draw[->] (1.35,0.45) -- (1.55,0.45);
    \end{tikzpicture}
    \begin{tikzpicture}[scale=0.9,transform shape]
    \fill[black!15!white] (0,0.6) rectangle (0.6,0.9);
    \draw[step=0.3cm,color=black] (0,0) grid (1.2,0.9);
    \draw[->] (1.35,0.45) -- (1.55,0.45);
    \draw (1.9,0.45) node {$\dots$};
    \draw[->] (2.2,0.45) -- (2.4,0.45);
    \end{tikzpicture}
    \begin{tikzpicture}[scale=0.9,transform shape]
    \fill[black!15!white] (0,0.6) rectangle (1.2,0.9) (0.9,0.3) rectangle (1.2,0.6);
    \draw[step=0.3cm,color=black] (0,0) grid (1.2,0.9);
    \draw[->] (1.35,0.45) -- (1.55,0.45);
    \draw (1.9,0.45) node {$\dots$};
    \draw[->] (2.2,0.45) -- (2.4,0.45);
    \end{tikzpicture}
    \begin{tikzpicture}[scale=0.9,transform shape]
    \fill[black!15!white] (0,0.6) rectangle (1.2,0.9)(0.9,0) rectangle (1.2,0.6)(0,0) rectangle (0.9,0.3);
    \draw[step=0.3cm,color=black] (0,0) grid (1.2,0.9);
    \draw[->] (1.35,0.45) -- (1.55,0.45);
    \end{tikzpicture}
    \begin{tikzpicture}[scale=0.9,transform shape]
    \fill[black!15!white] (0,0) rectangle (1.2,0.9);
    \draw[step=0.3cm,color=black] (0,0) grid (1.2,0.9);
    \end{tikzpicture}
\caption{\label{fig7} The $4\times3$ rectangle represents the $1$-cluster whose spins are sequentially flipped to $2$.}
\end{figure}%\FloatBarrier

First, we calculate $H(\eta')-H(\eta)$. To this end, note that the edges contained in $V\setminus C$ are not affected by the $pq$ spin updates. Thus,
\[
H(\eta')-H(\eta)=(2pq-p-q)\gamma_1\big[-\mathfrak{N}_2(\partial C)\gamma_{12}+\mathfrak{N}_3(\partial C)(\gamma_{23}-\gamma_{13})\big].
\]
Here, $2pq-p-q$ is the number of internal edges in $C$. Hence, along with \eqref{e_N2partialC},
\begin{align*}
H(\eta')-H(\eta)&\le(2pq-p-q)\gamma_1+(p+q)\gamma_{23}-2(p+q)\gamma_{12}\\
&=2pq\gamma_1-(p+q)(2\gamma_{12}-\gamma_{23}+\gamma_1).
\end{align*}
Subjected to the condition $1\le p,q<\ell^\star$, a simple algebraic computation reveals that the maximum of the last term is attained on $(p,q)=(1,1)$ or $(\ell^\star-1,\ell^\star-1)$. If $(p,q)=(1,1)$ then the value equals $-4\gamma_{12}+2\gamma_{23}<0$ by Assumption \ref{assump}-\textbf{C}, whereas if $(p,q)=(\ell^\star-1,\ell^\star-1)$ then the value becomes
\[
2(\ell^\star-1)[(\ell^\star-1)\gamma_1-2\gamma_{12}+\gamma_{23}-\gamma_1]<2(\ell^\star-1)\Big(-\gamma_{12}+\gamma_{23}-\frac{\gamma_1}{2}\Big)<0,
\]
where we used $\ell^\star<\frac{2\gamma_{12}+\gamma_1}{2\gamma_1}+1$ in the first inequality and Assumption \ref{assump}-\textbf{C} in the second inequality. Thus, we deduce that
\[
H(\eta')-H(\eta)<0.
\]
Therefore, to estimate the stability level of $\eta$ we may focus on the maximal energy attained along the path $\omega:\eta\to\eta'$. By \eqref{e_ham'}, for each $1\le n\le pq$ it holds that
\[
H(\omega_n)-H(\eta)=-\gamma_1[\mathfrak{n}_{11}(\omega_n)-\mathfrak{n}_{11}(\eta)]+\sum_{i<j}\gamma_{ij}[\mathfrak{n}_{ij}(\omega_n)-\mathfrak{n}_{ij}(\eta)].
\]
Note that the edges contained in $V\setminus C$ are unchanged along the path. Thus, we may write
\[
H(\omega_n)-H(\eta)=g_1(n)+g_2(n),
\]
where
\begin{align}\label{e_g1}
g_1(n):=&-\gamma_1\Big[\sum_{\{x,y\}\in E:\,\{x,y\}\cap C\ne\varnothing} \mathbbm{1}_{\{\omega_n(x)=\omega_n(y)=1\}}-(2pq-p-q)
\Big]\nonumber\\
&+\gamma_{12}\Big[\sum_{j=2}^3\sum_{\{x,y\}\in E:\,\{x,y\}\cap C\ne\varnothing}\mathbbm{1}_{\{\{\omega_n(x),\omega_n(y)\}=\{1,j\}\}}-2(p+q)\Big]
\end{align}
and
\begin{align}\label{e_g2}
g_2(n):=\gamma_{23}\sum_{\{x,y\}\in E:\,\{x,y\}\cap C\ne\varnothing}\mathbbm{1}_{\{\{\omega_n(x),\omega_n(y)\}=\{2,3\}\}}.
\end{align}
First, note that $g_1$ records precisely the energy of the (reversed) reference path from $\mathbf2$ to a configuration with a single $1$-cluster $C$, translated by a fixed real number which is $(2pq-p-q)\gamma_1-2(p+q)\gamma_{12}+H(\mathbf2)$. Since $p,q\le\ell^\star-1$, it can be inferred from Definition \ref{d_refpath1} that
\[
\max_{0\le n\le pq}g_1(n)=g_1(p-1)=2(p-1)\gamma_1.
\]
Next, note that $g_2$ is monotone increasing. Hence, we apply a crude bound via assumption \eqref{e_N2partialC} by
\[
\max_{0\le n\le pq}g_2(n)=g_2(pq)=\gamma_{23}\times\mathfrak{N}_3(\partial C)\le (p+q)\gamma_{23}.
\]
Therefore, we estimate
\[
\max_n\{H(\omega_n)-H(\eta)\}\le\max_ng_1(n)+\max_ng_2(n)\le 2(p-1)\gamma_1+(p+q)\gamma_{23}.
\]
Since $p,q\le\ell^\star-1$, this is bounded above by
\[
2(\ell^\star-2)\gamma_1+2(\ell^\star-1)\gamma_{23}<2(\ell^\star-1)(\gamma_{23}+\gamma_1).
\]
Hence, we deduce that
\[
V_\eta\le\max_{0\le n\le pq}\{H(\omega_n)-H(\eta)\}<2(\ell^\star-1)(\gamma_{23}+\gamma_1)
\]
and this concludes the proof of \textbf{Case 1}.
\end{itemize}\medskip{}

\noindent \textbf{(Case 2: $\eta$ does not have an $1$-cluster)}
In this case, $\eta\in\mathcal{X}^{23}$ (cf. \eqref{e_Xij}). We claim that
\[
V_{\eta}\le2\gamma_{23}.
\]
To this end, we take a $2$-cluster $C'$ of $\eta$ which is possible since $\eta\ne\mathbf3$. If there is an internal angle of $\frac{3}{2}\pi$ in the border of $C'$, as in the proof of Lemma \ref{l_1cluster} we can find $w_1,w_2\in C'$ and $v\in \partial C'$ such that $\eta(v)=3$, $\eta(w_1)=\eta(w_2)=2$ and $w_1\sim v\sim w_2$. Thus, by updating the spin $3$ at site $v$ to spin $2$, the energy difference is at most $2\gamma_{23}-2\gamma_{23}=0$, which means that the energy does not increase. Moreover, the number of spins $2$ increases. If we repeat this procedure as long as there remains an internal angle of $\frac32\pi$ in the border of some $2$-cluster, we either obtain $\mathbf{2}$ (then there is nothing to prove since in this case $V_\eta=0$), or obtain a configuration $\hat\eta$ where all internal angles of $2$-clusters are at most $\pi$. We may assume the latter case.\medskip{}

All that remains to be proved is that $V_{\hat\eta}\le2\gamma_{23}$. There are two subcases depending on the internal angles of the $2$-clusters of $\hat\eta$.
\begin{itemize}
\item \textbf{(If the internal angles are all $\pi$)} In this subcase, all the $2$-clusters are strips and in turn all the $3$-clusters are also strips. Thus, taking any $3$-bridge in $\hat\eta$ which is adjacent to a $2$-cluster, we can update consecutively the spins $3$ to spins $2$, so that the maximal energy along the updates are $H(\hat\eta)+2\gamma_{23}$, which can be attained only at the first step. Repeating this, we eventually obtain $\mathbf2$. Therefore, we have proved that $V_{\hat\eta}\le2\gamma_{23}$.
\item \textbf{(If an internal angle is $\frac12\pi$)} By simple inspection, there always exists a collection of spin dispositions which have the form as in Figure \ref{fig8}.

\begin{figure}[ht]
\centering
\begin{tikzpicture}[scale=1.2,transform shape]
\fill[black!15!white] (0.3,0.3)rectangle(2.7,0.6);
\fill[black!60!white](0,0.3)rectangle(0.3,0.6)(2.7,0.3)rectangle(3,0.6)(0.3,0)rectangle(2.7,0.3);
\draw[step=0.3cm,color=black,thick] (0,0.3) grid (3,0.6)(0.3,0) grid (2.7,0.3);
\end{tikzpicture}
\caption{\label{fig8}Light-gray and dark-gray colors represent spins $2$ and $3$, respectively.}
\end{figure}
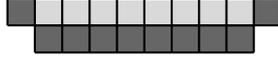\FloatBarrier

Thus, by updating consecutively the indicated light-gray (spin $2$) colors to dark-gray (spin $3$) colors, the energy does not increase along the path and at the last step the energy decreases by at least $2\gamma_{23}$. Thus, we deduce that $V_{\hat\eta}=0$ in this case. Therefore, we are done.
\end{itemize}
\end{proof}

% SUBSEC: Initial cycle and restricted gate
\subsection{Initial cycle and restricted gate}

In this subsection, we define the initial cycle for each metastable and stable configurations. Then, we prove some key lemmas regarding restricted gates, which are crucial to prove the main results in Section \ref{Sec3.3} regarding the minimal gates.\medskip{}

For each $r\in S$, we define the \textit{initial cycle} of $\mathbf{r}$ as
\begin{align}\label{e_Cr}
\mathcal{C}_r:=\{\sigma\in\mathcal{X}:\ \Phi(\mathbf{r},\sigma) <H(\mathbf2)+\Gamma^\star\}.
\end{align}
By Theorem \ref{t_comheight}, $\mathbf1$, $\mathbf2$ and $\mathbf3$ are mutually separated by the energy barrier $H(\mathbf2)+\Gamma^\star$. Therefore, it follows that $\mathcal{C}_1$, $\mathcal{C}_2$ and $\mathcal{C}_3$ are mutually disjoint.

First, we show that the domains of attraction are stable under projections.

\begin{lemma}\label{l_inistab}
It holds that
\begin{enumerate}
\item[\emph{(a)}] $\mathscr{P}^{32}\mathcal{C}_1\subset\mathcal{C}_1$ and $\mathscr{P}^{32}\mathcal{C}_2\subset\mathcal{C}_2$.
\item[\emph{(b)}] $\mathscr{P}^{12}\mathcal{C}_2\subset\mathcal{C}_2$ and $\mathscr{P}^{12}\mathcal{C}_3\subset\mathcal{C}_3$.
\end{enumerate}
\end{lemma}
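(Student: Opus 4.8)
The plan is to show that applying the relevant projection operator to a path witnessing membership in an initial cycle produces another path witnessing the same membership; the two ingredients are that $\mathbf1$, $\mathbf2$, $\mathbf3$ are fixed points of the projections in question, and that the projections do not increase energy (Lemmas \ref{l_P32} and \ref{l_P12}).

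For part (a), fix $r\in\{1,2\}$ and $\sigma\in\mathcal{C}_r$, and choose by \eqref{e_Cr} a path $\omega=(\omega_n)_{n=0}^N$ from $\mathbf r$ to $\sigma$ with $\Phi_\omega<H(\mathbf2)+\Gamma^\star$. Set $\bar\omega_n:=\mathscr{P}^{32}\omega_n$. Since neither $\mathbf1$ nor $\mathbf2$ contains spin $3$, we have $\mathscr{P}^{32}\mathbf r=\mathbf r$, so $\bar\omega_0=\mathbf r$ and $\bar\omega_N=\mathscr{P}^{32}\sigma$; and since $\omega_n,\omega_{n+1}$ differ in at most one site, so do $\bar\omega_n,\bar\omega_{n+1}$, so $\bar\omega$ is a path (possibly with some non-updating steps, which can be removed without affecting its height) from $\mathbf r$ to $\mathscr{P}^{32}\sigma$. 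By Lemma \ref{l_P32}, $H(\bar\omega_n)\le H(\omega_n)$ for every $n$, hence $\Phi_{\bar\omega}\le\Phi_\omega<H(\mathbf2)+\Gamma^\star$, which by \eqref{e_Cr} gives $\mathscr{P}^{32}\sigma\in\mathcal{C}_r$.

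For part (b), run the same argument with $\mathscr{P}^{12}$ in place of $\mathscr{P}^{32}$: neither $\mathbf2$ nor $\mathbf3$ contains spin $1$, so $\mathscr{P}^{12}\mathbf2=\mathbf2$ and $\mathscr{P}^{12}\mathbf3=\mathbf3$, and recall $H(\mathbf3)=H(\mathbf2)$ by the $2\leftrightarrow3$ symmetry, so the threshold $H(\mathbf2)+\Gamma^\star$ in \eqref{e_Cr} is common to $\mathcal{C}_2$ and $\mathcal{C}_3$. The only additional point is that Lemma \ref{l_P12} requires the two hypotheses $H(\omega_n)-H(\mathbf2)\le\Gamma^\star$ and $\mathfrak{N}_1(\omega_n)\le\ell^{\star2}$ to hold at each $\omega_n$ along the path from $\mathbf m\in\{\mathbf2,\mathbf3\}$ to $\sigma\in\mathcal{C}_m$. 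The first is immediate since $H(\omega_n)\le\Phi_\omega<H(\mathbf2)+\Gamma^\star$. For the second, argue exactly as in the proof of Proposition \ref{p_23LB}: $\mathfrak{N}_1(\omega_0)=0$ and $\mathfrak{N}_1$ varies by at most $1$ per step, so if $\mathfrak{N}_1(\omega_n)\ge\ell^\star(\ell^\star-1)+1$ for some $n$ then $\mathfrak{N}_1(\omega_{n_0})=\ell^\star(\ell^\star-1)+1$ for some $n_0$, and Lemma \ref{l_critical}-(a) would force $H(\omega_{n_0})\ge H(\mathbf2)+\Gamma^\star$, contradicting $\Phi_\omega<H(\mathbf2)+\Gamma^\star$; hence $\mathfrak{N}_1(\omega_n)\le\ell^\star(\ell^\star-1)<\ell^{\star2}$ for all $n$. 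With both hypotheses in hand, Lemma \ref{l_P12} yields $H(\mathscr{P}^{12}\omega_n)\le H(\omega_n)$, and the path $\mathscr{P}^{12}\omega$ from $\mathbf m$ to $\mathscr{P}^{12}\sigma$ has height $\le\Phi_\omega<H(\mathbf2)+\Gamma^\star$, so $\mathscr{P}^{12}\sigma\in\mathcal{C}_m$.

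The bookkeeping of the path surgery (suppressing non-updating steps) is routine. The one genuinely nontrivial point is verifying the a priori bound $\mathfrak{N}_1(\omega_n)\le\ell^{\star2}$ needed to legitimately invoke Lemma \ref{l_P12} in part (b); this is where Lemma \ref{l_critical} (and thus the isoperimetric input behind it) is essential, and it is the step I would treat most carefully.
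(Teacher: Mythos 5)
Your proof is correct and follows essentially the same route as the paper: part (a) is the projection-of-a-path argument with Lemma \ref{l_P32}, and part (b) adds exactly the same a priori bound $\mathfrak{N}_1\le\ell^\star(\ell^\star-1)$ along the path (via the step-by-step variation of $\mathfrak{N}_1$ and Lemma \ref{l_critical}-(a)) that the paper isolates as its claim \eqref{e_claim} before invoking Lemma \ref{l_P12}. The only differences are cosmetic (path orientation and stating the bound pathwise rather than for points of $\mathcal{C}_2\cup\mathcal{C}_3$).
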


\begin{proof}
(a) Suppose that $\sigma\in\mathcal{C}_r$ for some $r\in\{1,2\}$. Then, there exists a path $\omega:\sigma\to\mathbf{r}$ whose height is strictly less than $H(\mathbf2)+\Gamma^\star$. Then by Lemma \ref{l_P32}, the projected path $\mathscr{P}^{32}\omega:\mathscr{P}^{32}\sigma\to\mathbf{r}$ has height also strictly less than $H(\mathbf2)+\Gamma^\star$. This proves that $\mathscr{P}^{32}\sigma\in\mathcal{C}_r$.\medskip{}

\noindent (b) First, we claim that for any $\eta\in\mathcal{X}$,
\begin{align}\label{e_claim}
\eta\in\mathcal{C}_2\cup\mathcal{C}_3\;\;\;\;\text{implies}\;\;\;\;\mathfrak{N}_1(\eta)\le\ell^\star(\ell^\star-1).
\end{align}
To this end, assume by contradiction that $\mathfrak{N}_1(\eta)>\ell^\star(\ell^\star-1)$ and, without loss of generality, assume that $\eta\in\mathcal{C}_2$. Take a path $\omega_0:\eta\to\mathbf2$ whose height is strictly less than $H(\mathbf2)+\Gamma^\star$. Then since $\mathfrak{N}_1(\mathbf2)=0$, there exists $\zeta\in\omega_0$ such that $\mathfrak{N}_1(\zeta)=\ell^\star(\ell^\star-1)+1$. Then by Lemma \ref{l_critical}, $H(\zeta)\ge H(\mathbf2)+\Gamma^\star$ and we obtain a contradiction.\medskip{}

Now, assume $\sigma\in\mathcal{C}_s$ for some $s\in\{2,3\}$. Then, there exists a path $\omega:\sigma\to\mathbf{s}$ whose height is less than $H(\mathbf2)+\Gamma^\star$. By the claim above, the number of spins $1$ of every configuration in $\omega$ does not exceed $\ell^\star(\ell^\star-1)$. Thus by Lemma \ref{l_P12}, the projected path $\mathscr{P}^{12}\omega:\mathscr{P}^{12}\sigma\to\mathbf{s}$ also has height less than $H(\mathbf2)+\Gamma^\star$ and we conclude that $\mathscr{P}^{12}\sigma\in\mathcal{C}_s$.
\end{proof}

Now, the first result in this subsection analyzes the optimal paths from $\mathbf m\in\{\mathbf2,\mathbf3\}$ to $\mathbf1$ which do not visit $\mathcal{C}_3$. Such paths are called \textit{restricted-paths} in \cite{bet2021critical}.

\begin{proposition}\label{p_213}
Suppose that $\omega=(\omega_n)_{n=0}^N$ is an optimal path from $\mathbf m$ to $\mathbf1$ which does not visit $\mathcal{C}_{m'}$ where $\{m,m'\}=\{2,3\}$. Then, we have
\[
\omega\cap\mathcal{W}(\mathbf m,\mathbf1)\ne\varnothing.
\]
\end{proposition}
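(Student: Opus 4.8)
By the $2\leftrightarrow3$ symmetry of the model we may assume $\mathbf m=\mathbf2$, so that $\omega$ is an optimal path from $\mathbf2$ to $\mathbf1$ (hence $\Phi_\omega=H(\mathbf2)+\Gamma^\star$ by Theorem~\ref{t_comheight}) which never enters $\mathcal C_3$. The plan is to project $\omega$ onto a $12$-path, apply the Ising gate property (Proposition~\ref{p_gatepos}), and then lift the conclusion back to $\omega$ itself; the hypothesis $\omega\cap\mathcal C_3=\varnothing$ will be used precisely in this last lifting step.

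First I set $\bar\omega:=\mathscr P^{32}\omega$. By Lemma~\ref{l_P32} we have $H(\bar\omega_k)\le H(\omega_k)$ for every $k$, so $\Phi_{\bar\omega}\le H(\mathbf2)+\Gamma^\star$; since $\bar\omega$ is a $12$-path from $\mathbf2$ to $\mathbf1$ (possibly with repeated consecutive configurations), Proposition~\ref{p_gatepos}-(a) gives the reverse inequality, so $\bar\omega$ is optimal and Proposition~\ref{p_gatepos}-(b) produces an index $n$ with $\bar\omega_n=\mathscr P^{32}\omega_n\in\mathcal W(\mathbf2,\mathbf1)$. I will further choose $n$ so that $\mathfrak N_1(\bar\omega_{n-1})=\ell^\star(\ell^\star-1)$, which makes $\bar\omega_{n-1}$ a protocritical rectangle, i.e.\ $\bar\omega_{n-1}\in R_{\ell^\star-1,\ell^\star}(2,1)\cup R_{\ell^\star,\ell^\star-1}(2,1)$: informally, the first index at which $\mathfrak N_1$ hits $\ell^\star(\ell^\star-1)+1$ already lands, by Lemma~\ref{l_critical} and optimality, in $\mathcal W(\mathbf2,\mathbf1)\cup\mathcal W'(\mathbf2,\mathbf1)$ at height $H(\mathbf2)+\Gamma^\star$, and the classical structure of optimal Ising paths with positive field (using Assumptions~\ref{assump}-\textbf{A},\textbf{C} to rule out the ``wrong-side'' configurations $\mathcal W'(\mathbf2,\mathbf1)$ as well as the supercritical configurations reachable only through energy $>H(\mathbf2)+\Gamma^\star$) forces this to be a clean passage through $\mathcal W(\mathbf2,\mathbf1)$ from a protocritical rectangle.

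Now I lift. Since configurations in $\mathcal W(\mathbf2,\mathbf1)$ realize the communication height, $H(\bar\omega_n)=H(\mathbf2)+\Gamma^\star$; Lemma~\ref{l_P32} gives $H(\omega_n)-H(\bar\omega_n)=\gamma_{23}\,\mathfrak n_{23}(\omega_n)\ge0$, and since $H(\omega_n)\le\Phi_\omega=H(\mathbf2)+\Gamma^\star$ this forces $\mathfrak n_{23}(\omega_n)=0$. The spin-$1$ sites of $\omega_n$ coincide with those of $\bar\omega_n$ and hence form the small rectangle-plus-unit-protuberance cluster $C$; since $C$ does not wrap around $\Lambda$, the complement $V\setminus C$ is connected, so the absence of $23$-edges forces $\omega_n$ to be monochromatic on $V\setminus C$. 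If that value is $2$ then $\omega_n=\bar\omega_n\in\mathcal W(\mathbf2,\mathbf1)$ and we are done; otherwise $\omega_n\in\mathcal W(\mathbf3,\mathbf1)$, the case that must be excluded. Here I use the hypothesis: $\mathfrak N_1(\omega_{n-1})=\mathfrak N_1(\bar\omega_{n-1})=\ell^\star(\ell^\star-1)=\mathfrak N_1(\omega_n)-1$, and the unique spin $1$ of $\omega_n$ outside the rectangle is the protuberance site, so $\omega_{n-1}$ equals $\omega_n$ except that this site carries a spin in $\{2,3\}$; thus $\omega_{n-1}$ is a $(\ell^\star-1)\times\ell^\star$ rectangle of spins $1$ in a sea of spins $3$, with at most one boundary site of the rectangle equal to $2$. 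If that site is a $3$, then $\omega_{n-1}\in R_{\ell^\star-1,\ell^\star}(3,1)\cup R_{\ell^\star,\ell^\star-1}(3,1)$; if it is a $2$, one flip $2\to3$ there lowers the energy by $3\gamma_{23}$ (using $\gamma_{12}=\gamma_{13}$) and, by Assumption~\ref{assump}-\textbf{C}, keeps the energy strictly below $H(\mathbf2)+\Gamma^\star$, landing again in $R_{\ell^\star-1,\ell^\star}(3,1)\cup R_{\ell^\star,\ell^\star-1}(3,1)$. Finally, a protocritical rectangle of spins $1$ in a sea of spins $3$ belongs to $\mathcal C_3$, because building it from $\mathbf3$ along the reference $13$-path of Definition~\ref{d_refpath1} only passes through quasi-square droplets carrying at most a unit protuberance, each strictly below $H(\mathbf2)+\Gamma^\star$ (an explicit computation from Assumption~\ref{assump}-\textbf{A}) — the level $H(\mathbf2)+\Gamma^\star$ being attained only at the $\{1,3\}$-critical configuration, which is formed after the full rectangle and hence off this sub-path. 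So $\omega_{n-1}\in\mathcal C_3$, contradicting the hypothesis; hence the ``sea of $3$'s'' case cannot occur and $\omega\cap\mathcal W(\mathbf2,\mathbf1)\ne\varnothing$.

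The genuinely hard part is this exclusion of the case $\omega_n\in\mathcal W(\mathbf3,\mathbf1)$, i.e.\ of a path reaching $\mathbf1$ ``from the $\mathbf3$-side''. It is what makes the restriction $\omega\cap\mathcal C_3=\varnothing$ indispensable, and it hinges on two points whose verification is the bulk of the work: choosing the index $n$ so that the preceding configuration has one fewer spin $1$ (which requires the structure theory of optimal Ising paths — that the first passage through the critical configuration arrives from a protocritical rectangle), and the energy estimates — all instances of Assumptions~\ref{assump}-\textbf{A} and \textbf{C} — certifying that the relevant protocritical configurations with a sea of spins $3$ lie strictly below the communication height as seen from $\mathbf3$. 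The rest is a routine combination of Lemma~\ref{l_P32} and Proposition~\ref{p_gatepos}.
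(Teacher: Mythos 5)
Your first half is sound and matches the paper's strategy (project with $\mathscr P^{32}$, use Lemma~\ref{l_P32} and Proposition~\ref{p_gatepos} to find $n$ with $\bar\omega_n\in\mathcal W(\mathbf2,\mathbf1)$, then use the equality case of Lemma~\ref{l_P32} to conclude $\omega_n\in\mathcal W(\mathbf2,\mathbf1)\cup\mathcal W(\mathbf3,\mathbf1)$). The gap is the step where you ``further choose $n$ so that $\mathfrak N_1(\bar\omega_{n-1})=\ell^\star(\ell^\star-1)$''. Proposition~\ref{p_gatepos}-(b) only guarantees a visit to $\mathcal W(\mathbf2,\mathbf1)$; it says nothing about how the path enters it. Your justification is that the first index at which $\mathfrak N_1$ reaches $\ell^\star(\ell^\star-1)+1$ lands in $\mathcal W(\mathbf2,\mathbf1)$, with Assumptions~\ref{assump}-\textbf{A},\textbf{C} ruling out $\mathcal W'(\mathbf2,\mathbf1)$; but that is not true: an optimal path may perfectly well make its first crossing into $\mathcal W'(\mathbf2,\mathbf1)$ (protuberance on the short side, then retreat to the rectangle, and only later pass through $\mathcal W(\mathbf2,\mathbf1)$), since $\mathcal W'$ sits at exactly the same energy level, and nothing in \textbf{A} or \textbf{C} forbids visiting it. Moreover a visit to $\mathcal W(\mathbf2,\mathbf1)$ can occur ``from above'', i.e.\ with $\bar\omega_{n-1}\in B^2_{\ell^\star-1,\ell^\star}(2,1)\cup\hat B^2_{\ell^\star,\ell^\star-1}(2,1)$, in which case $\mathfrak N_1(\bar\omega_{n-1})=\ell^\star(\ell^\star-1)+2$, not $\ell^\star(\ell^\star-1)$. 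What you actually need is that \emph{some} upward crossing of the level $\ell^\star(\ell^\star-1)\to\ell^\star(\ell^\star-1)+1$ is a move from a protocritical rectangle into $\mathcal W(\mathbf2,\mathbf1)$. This is true (for instance the last such crossing works: if it landed in $\mathcal W'$, the path would afterwards be confined to droplets inside an $(\ell^\star-1)\times(\ell^\star+1)$ box, because creating a protuberance on a long side costs $2\gamma_{12}-\gamma_1$, which exceeds the remaining energy budget $(2\ell^\star-4)\gamma_1$ below $H(\mathbf2)+\Gamma^\star$, so it could never reach $\mathbf1$ without dropping back through a rectangle), but this dead-end analysis of the $\mathcal W'$-side valley is a substantive argument that your write-up does not contain and that cannot simply be quoted from Proposition~\ref{p_gatepos} or Lemma~\ref{l_critical}.

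The paper avoids needing any such structural statement by a different device: it truncates $\omega$ at its \emph{first} visit to $\mathcal C_1$, so that the predecessor $\omega_{m-1}$ of the saddle is known to lie outside $\mathcal C_1\cup\mathcal C_3$; then the admissible predecessors of a configuration of $\mathcal W(\mathbf3,\mathbf1)$ are dispatched because the rectangle-in-a-sea-of-$3$'s lies in $\mathcal C_3$ (forbidden by hypothesis) while the $B^2$-type configurations lie in $\mathcal C_1$ (forbidden by the first-visit truncation). If you incorporate that truncation, your lifting argument goes through and the unproven claim becomes unnecessary. As a side remark, your treatment of the predecessor whose protuberance site carries a spin $2$ (energy drop $3\gamma_{23}$ upon flipping it to $3$, landing in the protocritical rectangle and hence in $\mathcal C_3$) covers a case that the short case list in the paper's own proof does not mention explicitly, and is worth keeping.
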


\begin{proof}
Without loss of generality, assume that $\mathbf m=\mathbf2$. As $\omega_0=\mathbf2\notin\mathcal{C}_1$ and $\omega_N=\mathbf1\in\mathcal{C}_1$, we can define
\[
N':=\min\{1\le n\le N:\ \omega_n\in\mathcal{C}_1\}.
\]
Then, $\omega_{N'}\in\mathcal{C}_1$ and
\begin{align}\label{e_condi}
\omega_0,\dots,\omega_{N'-1}\notin\mathcal{C}_1\cup\mathcal{C}_3.
\end{align}
Define $\bar\omega_n:=\mathscr{P}^{32}\omega_n$ for each $0\le n\le N'$. Then, $\bar\omega_0=\mathbf2$ and by Lemma \ref{l_inistab}-(a), $\bar\omega_{N'}\in\mathcal{C}_1$, so that $\bar\omega=(\bar\omega_n)_{n=0}^{N'}$ is an $12$-path from $\mathbf2$ to $\mathcal{C}_1$. As the height of $(\omega_n)_{n=0}^{N'}$ is at most $H(\mathbf2)+\Gamma^\star$, Lemma \ref{l_P32} implies that the height of $\bar\omega$ is at most $H(\mathbf2)+\Gamma^\star$. Thus by Proposition \ref{p_gatepos}-(a), the height of $\bar\omega$ must be exactly $H(\mathbf2)+\Gamma^\star$ and in turn by Proposition \ref{p_gatepos}-(b), 
\[
\bar\omega\cap\mathcal{W}(\mathbf2,\mathbf1)\ne\varnothing.
\]
Hence, there exists $0\le m\le N'$ so that $\bar\omega_m\in\mathcal{W}(\mathbf2,\mathbf1)$. It readily holds that $H(\bar\omega_m)=H(\mathbf2)+\Gamma^\star$. Moreover, by Lemma \ref{l_P32}, $H(\omega_m)\ge H(\bar\omega_m)$. Thus,
\[
H(\mathbf2)+\Gamma^\star\ge H(\omega_m)\ge H(\bar\omega_m) = H(\mathbf2)+\Gamma^\star,
\]
so that equalities must hold in all places. Then, Lemma \ref{l_P32} again implies that
\[
\omega_m\in\mathcal{W}(\mathbf2,\mathbf1)\;\;\;\;\text{or}\;\;\;\;\omega_m\in\mathcal{W}(\mathbf3,\mathbf1).
\]
We conclude the proof of this lemma by showing that the latter is impossible. To this end, suppose on the contrary that $\omega_m\in\mathcal{W}(\mathbf3,\mathbf1)$. Then since $P_\beta(\omega_m,\omega_{m-1})>0$ and $H(\omega_m)=H(\mathbf2)+\Gamma^\star$, we readily deduce that
\begin{align}\label{e_poss1}
\omega_{m-1}\in R_{\ell^\star-1,\ell^\star}(3,1)\cup R_{\ell^\star,\ell^\star-1}(3,1)
\end{align}
or
\begin{align}\label{e_poss2}
\omega_{m-1}\in B_{\ell^\star-1,\ell^\star}^2(3,1)\cup \hat{B}_{\ell^\star,\ell^\star-1}^2(3,1),
\end{align}
since any other possibility implies that $H(\omega_{m-1})>H(\omega_m)=H(\mathbf2)+\Gamma^\star$, which is impossible. If \eqref{e_poss1} holds, then a part of the reference path $\mathbf3\to\mathbf1$ with respect to $\omega_m$ guarantees that $\omega_{m-1}\in\mathcal{C}_3$ which contradicts the condition \eqref{e_condi}. If \eqref{e_poss2} holds then the other side of the reference path guarantees that $\omega_{m-1}\in\mathcal{C}_1$, and we again obtain a contradiction with \eqref{e_condi}.
\end{proof}

Next, we deal with the optimal paths from $\mathbf2$ to $\mathbf3$ which do not visit $\mathcal{C}_1$.

\begin{proposition}\label{p_231}
Suppose that $\omega=(\omega_n)_{n=0}^N$ is an optimal path from $\mathbf2$ to $\mathbf3$ which does not visit $\mathcal{C}_1$. Choose any set
\[
\mathscr{A}\in\big\{\mathscr{W}_j^h(\mathbf2,\mathbf3)\big\}_{j,h}\cup\big\{\mathscr{Q}(\mathbf2,\mathbf3),\mathscr{P}(\mathbf2,\mathbf3),\mathscr{P}(\mathbf3,\mathbf2),\mathscr{Q}(\mathbf3,\mathbf2)\big\}\cup\big\{\mathscr{H}_i(\mathbf2,\mathbf3)\big\}_i\cup\big\{\mathscr{H}_i(\mathbf3,\mathbf2)\big\}_i.
\]
Then, we have
\[
\omega\cap\mathscr{A}\ne\varnothing.
\]
\end{proposition}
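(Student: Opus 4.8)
The plan is to collapse the spins $1$ occurring in $\omega$ by means of the projection operator $\mathscr{P}^{12}$, thereby turning $\omega$ into a $23$-path from $\mathbf2$ to $\mathbf3$, and then to invoke Proposition \ref{p_gatezero}-(b). The one delicate point is that Proposition \ref{p_gatezero}-(b) will only tell us that the \emph{projected} path meets $\mathscr{A}$, so the last task will be to transfer this information back to $\omega$ itself. The hypothesis that $\omega$ avoids $\mathcal{C}_1$ will be used exactly once, to control the number of spins $1$ along $\omega$, and this is the step I expect to be the technical heart.

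First I would bound $\mathfrak{N}_1$ along $\omega$. Since $\omega$ is optimal, Theorem \ref{t_comheight} gives $\Phi_\omega=\Phi(\mathbf2,\mathbf3)=H(\mathbf2)+\Gamma^\star$, so $H(\omega_n)\le H(\mathbf2)+\Gamma^\star$ for every $n$. I claim that $\mathfrak{N}_1(\omega_n)\le\ell^{\star2}$ for all $n$. If this failed, then — because $\mathfrak{N}_1(\omega_0)=\mathfrak{N}_1(\mathbf2)=0$ and $|\mathfrak{N}_1(\omega_{n+1})-\mathfrak{N}_1(\omega_n)|\le1$ — there would be an index $n_0$ with $\mathfrak{N}_1(\omega_{n_0})=\ell^\star(\ell^\star-1)+1$; by Lemma \ref{l_critical}-(a) together with optimality we would have $H(\omega_{n_0})=H(\mathbf2)+\Gamma^\star$, hence, by Lemma \ref{l_critical}-(b), $\omega_{n_0}\in\bigcup_{m=2}^{3}[\mathcal{W}(\mathbf m,\mathbf1)\cup\mathcal{W}'(\mathbf m,\mathbf1)]$. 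From such a configuration the cluster of spins $1$ is a rectangle with sides $\ell^\star-1$ and $\ell^\star$ carrying a unit protuberance, and I would run a short case analysis of the single spin-flips that increase $\mathfrak{N}_1$ without raising the energy above $H(\mathbf2)+\Gamma^\star$, using the energy formula \eqref{e_ham'}, Lemma \ref{l_stable} and Assumption \ref{assump}. Any such flip either yields a configuration from which $\mathbf1$ is reachable by a path staying strictly below $H(\mathbf2)+\Gamma^\star$ (this happens as soon as the $1$-cluster reaches a roughly $\ell^\star\times\ell^\star$ shape, since such a droplet can be grown into a strip and then to $\mathbf1$ downhill), so it lies in $\mathcal{C}_1$; or it keeps the growing $1$-cluster confined to the ``wrong-side'' rectangles, whose area never exceeds $\ell^{\star2}-1$, and from the maximal such rectangle any further enlargement of the $1$-cluster — even a partial one, and even exploiting an intervening cluster of spins $3$ — can be completed downhill to an $\ell^\star\times(\ell^\star+1)$ rectangle, which again lies in $\mathcal{C}_1$. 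Since $\omega$ never visits $\mathcal{C}_1$ (here is where we use Lemma \ref{l_inistab} and the definition \eqref{e_Cr}), the first alternative is impossible, and iterating the second forces $\mathfrak{N}_1(\omega_n)\le\ell^{\star2}-1<\ell^{\star2}$ for all $n$, a contradiction.

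Next I would project. With $\mathfrak{N}_1(\omega_n)\le\ell^{\star2}$ and $H(\omega_n)-H(\mathbf2)\le\Gamma^\star$ for all $n$, Lemma \ref{l_P12} applies to each $\omega_n$, so $\tilde\omega_n:=\mathscr{P}^{12}\omega_n$ satisfies $H(\tilde\omega_n)\le H(\omega_n)\le H(\mathbf2)+\Gamma^\star$, and $\tilde\omega=(\tilde\omega_n)_{n=0}^N$ is a $23$-path (possibly with non-updating instances) from $\tilde\omega_0=\mathscr{P}^{12}\mathbf2=\mathbf2$ to $\tilde\omega_N=\mathscr{P}^{12}\mathbf3=\mathbf3$. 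By Proposition \ref{p_gatezero}-(a) we have $\Phi_{\tilde\omega}\ge H(\mathbf2)+\Gamma^\star$, hence $\Phi_{\tilde\omega}=H(\mathbf2)+\Gamma^\star$, and Proposition \ref{p_gatezero}-(b) then produces an index $n$ with $\tilde\omega_n\in\mathscr{A}$.

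Finally I would transfer back to $\omega$. In each configuration of $\mathscr{A}$ the spins $3$ occupy a rectangle or strip together with an adjacent bar, and a direct count gives $\mathfrak{n}_{23}=2(K+1)$; hence, using $H(\sigma)=H(\mathbf2)+\gamma_{23}\mathfrak{n}_{23}(\sigma)$ for $\sigma\in\mathcal{X}^{23}$ and Assumption \ref{assump}-\textbf{B} (which gives $\Gamma^\star=2(K+1)\gamma_{23}$, cf. \eqref{e_consts}), every configuration in $\mathscr{A}$ has energy exactly $H(\mathbf2)+\Gamma^\star$. Therefore $H(\tilde\omega_n)=H(\mathscr{P}^{12}\omega_n)=H(\mathbf2)+\Gamma^\star$, and combining with $H(\mathscr{P}^{12}\omega_n)\le H(\omega_n)\le H(\mathbf2)+\Gamma^\star$ from Lemma \ref{l_P12} forces all of these to be equalities. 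In particular equality holds in Lemma \ref{l_P12}, which by its equality clause happens only when $\mathfrak{N}_1(\omega_n)=0$; hence $\omega_n=\mathscr{P}^{12}\omega_n=\tilde\omega_n\in\mathscr{A}$, that is, $\omega\cap\mathscr{A}\neq\varnothing$, which is the assertion of the proposition.
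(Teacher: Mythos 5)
Your overall strategy coincides with the paper's: bound $\mathfrak{N}_1$ along $\omega$, project by $\mathscr{P}^{12}$, invoke Proposition \ref{p_gatezero}, and transfer back through the equality case of Lemma \ref{l_P12} (your closing observation that every configuration of $\mathscr{A}$ has $\mathfrak{n}_{23}=2(K+1)$, hence energy exactly $H(\mathbf2)+\Gamma^\star$, is correct and is also what the paper uses; projecting the whole path rather than stopping at the first entrance to $\mathcal{C}_3$ is an immaterial difference). The gap is in the first step, which you yourself call the technical heart but then only sketch. After locating $\omega_{n_0}\in\bigcup_{m}[\mathcal{W}(\mathbf m,\mathbf1)\cup\mathcal{W}'(\mathbf m,\mathbf1)]$, your case analysis hinges on the assertion that any admissible enlargement of the $1$-cluster beyond the wrong-side rectangle ``can be completed downhill to an $\ell^\star\times(\ell^\star+1)$ rectangle, which again lies in $\mathcal{C}_1$.'' This is not correct: such configurations have energy strictly \emph{above} $H(\mathbf2)+\Gamma^\star$ --- for instance, the filled $(\ell^\star-1)\times(\ell^\star+1)$ rectangle with one extra protuberance has energy $H(\mathbf2)+\Gamma^\star+2\gamma_{12}-(2\ell^\star-3)\gamma_1>H(\mathbf2)+\Gamma^\star$, because $\ell^\star<\tfrac{2\gamma_{12}+\gamma_1}{2\gamma_1}+1$ --- and since $\Phi(\mathbf1,\sigma)\ge H(\sigma)$, no such configuration can belong to $\mathcal{C}_1$. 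The real reason the path cannot place a spin $1$ outside the wrong-side rectangle is an energy estimate ruling the move out altogether, not a $\mathcal{C}_1$-membership argument, and that estimate is exactly what is missing from your sketch.

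Concretely, the paper takes $n_0$ to be the \emph{largest} index before the putative violation with $\mathfrak{N}_1(\omega_{n_0})=\ell^\star(\ell^\star-1)+1$, so that $\mathfrak{N}_1\ge\ell^\star(\ell^\star-1)+2$ afterwards; then, with $R_0$ the $(\ell^\star-1)\times(\ell^\star+1)$ rectangle around the $1$-cluster of $\omega_{n_0}$ and working with $\bar\omega_n=\mathscr{P}^{32}\omega_n$ to neutralize spin-$3$ interference, it shows that as long as the $1$'s stay inside $R_0$ every row and column of $R_0$ carries a spin $1$, whence $\mathfrak{n}_{12}(\bar\omega_n)\ge4\ell^\star$ and $\mathfrak{n}_{11}(\bar\omega_n)\le2\ell^{\star2}-2\ell^\star-2$, so $H(\bar\omega_n)\ge H(\mathbf2)+\Gamma^\star-(2\ell^\star-4)\gamma_1$; the first flip creating a $1$ outside $R_0$ then costs at least $2\gamma_{12}-\gamma_1$, and $2\gamma_{12}>(2\ell^\star-3)\gamma_1$ forces the energy above $H(\mathbf2)+\Gamma^\star$, contradicting optimality. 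Your phrase ``iterating the second'' also leaves unaddressed the intermediate moves in which $\mathfrak{N}_1$ drops, spins $2$ and $3$ are rearranged, or the $1$-cluster changes shape before growing again; fixing the last such index $n_0$ and running the confinement estimate uniformly for all $n\in[n_0,m_0)$ is precisely how this is handled. Without this quantitative step your claim $\mathfrak{N}_1(\omega_n)\le\ell^{\star2}$ is unproven, and the application of Lemma \ref{l_P12}, hence the whole argument, rests on it.
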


\begin{proof}
First, we claim that
\[
\mathfrak{N}_1(\omega_n)\le\ell^{\star2}-1\;\;\;\;\text{for all }0\le n\le N.
\]
To prove the claim, suppose the contrary that there exists $M$ such that $\mathfrak{N}_1(\omega_M)\ge\ell^{\star2}$. Then since $\mathfrak{N}_1(\omega_0)=\mathfrak{N}_1(\mathbf2)=0$, we can take the largest $n_0\in[0,M]$ such that $\mathfrak{N}_1(\omega_{n_0})=\ell^{\star2}-\ell^\star+1$. Then, $\mathfrak{N}_1(\omega_n)\ge\ell^{\star2}-\ell^\star+2$ for $n_0<n\le M$. Then by Lemma \ref{l_critical}, $H(\omega_{n_0})=H(\mathbf2)+\Gamma^\star$ and
\[
\omega_{n_0}\in\bigcup_{m=2}^3[\mathcal{W}(\mathbf m,\mathbf1)\cup\mathcal{W}'(\mathbf m,\mathbf1)].
\]
If $\omega_{n_0}\in\mathcal{W}(\mathbf m,\mathbf1)$ for some $m\in\{2,3\}$  then since $\mathfrak{N}_1(\omega_{n_0+1})=\mathfrak{N}_1(\omega_{n_0})+1$, the only possible option (so that $H(\omega_{n_0+1})\le H(\mathbf2)+\Gamma^\star$) is
\[
\omega_{n_0+1}\in B_{\ell^\star-1,\ell^\star}^2(m,1)\cup \hat{B}_{\ell^\star,\ell^\star-1}^2(m,1).
\]
Then, a suitable reference path from $\omega_{n_0+1}$ to $\mathbf1$ guarantees that $\omega_{n_0+1}\in\mathcal{C}_1$, which contradicts the assumption of the proposition. Hence, we may assume that
\[
\omega_{n_0}\in\bigcup_{m=2}^3\mathcal{W}'(\mathbf m,\mathbf1).
\]
We denote by $R_0$ the rectangle of side lengths $\ell^\star-1$ and $\ell^\star+1$, which is the smallest rectangle containing the $1$-cluster of $\omega_{n_0}$. We will demonstrate that
\[
\{v\in V:\ \omega_n(v) =1\} \subseteq R_0\;\;\;\;\text{for all }n_0\le n\le M,
\]
which contradicts $\mathfrak{N}_1(\omega_M)\ge\ell^{\star2}$ and thus proves the first claim. Let us suppose the contrary. Then, there exists $m_0\in(n_0,M]$ such that $\{v\in V:\ \omega_n(v)=1\}\subseteq R_0$ for all $n_0\le n<m_0$ and $\{v\in V:\ \omega_{m_0}(v)=1\}\nsubseteq R_0$. As usual, we write $\bar\omega_n:=\mathscr{P}^{32}\omega_n$.\medskip{}

\begin{itemize}
\item \textbf{(Step 1)} $H(\bar\omega_n)-H(\mathbf2)\ge 4\ell^\star\gamma_{12}-2\gamma_1(\ell^{\star2}-\ell^\star-1)$ for all $n_0\le n<m_0$.\medskip{}

Since $\mathfrak{N}_1(\bar\omega_n)\ge\ell^{\star2}-\ell^\star+1$ for $n_0\le n<m_0$, every row and column of $R_0$ must have at least one spin $1$. Indeed, if not then we would have
\[
\mathfrak{N}_1(\bar\omega_n)\le \max\{(\ell^\star-1)(\ell^\star+1-1),(\ell^\star+1)(\ell^\star-1-1)\}=\ell^{\star2}-\ell^\star,
\]
which contradicts $\mathfrak{N}_1(\bar\omega_n)\ge\ell^\star(\ell^\star-1)+1$. Thus, in each corresponding row and column there exists at least two $12$-edges in $\bar\omega_n$, so that we have
\begin{align}\label{eq1}
\mathfrak{n}_{12}(\bar\omega_n)\ge2(\ell^\star-1)+2(\ell^\star+1)=4\ell^\star.
\end{align}
Moreover, clearly the maximum of $\mathfrak{n}_{11}(\bar\omega_n)$ is attained when the box $R_0$ is full of spins $1$:
\begin{align}\label{eq2}
\mathfrak{n}_{11}(\bar\omega_n)\le(\ell^\star-1)\ell^\star+(\ell^\star+1)(\ell^\star-2)=2\ell^{\star2}-2\ell^\star-2.
\end{align}
Thus, by \eqref{eq1}, \eqref{eq2} and \eqref{e_ham'} we deduce
\[
H(\bar\omega_n)-H(\mathbf2)=\mathfrak{n}_{12}(\bar\omega_n)\gamma_{12}-\mathfrak{n}_{11}(\bar\omega_n)\gamma_1\ge4\ell^\star\gamma_{12}-2\gamma_1(\ell^{\star2}-\ell^\star-1).
\]
\item \textbf{(Step 2)} $H(\omega_{m_0})>H(\mathbf2)+\Gamma^\star$, which yields a contradiction.\medskip{}

Consider the spin flip from $\bar\omega_{m_0-1}$ to $\bar\omega_{m_0}$. Since this spin flip happens outside $R_0$, the energy difference is at least $2\gamma_{12}-\gamma_1$ and at most $4\gamma_{12}$. Therefore, by \textbf{Step 1}, we deduce that
\[
H(\bar\omega_{m_0})-H(\mathbf2)\ge H(\bar\omega_{m_0-1})-H(\mathbf2)+(2\gamma_{12}-\gamma_1)\ge (4\ell^\star+2)\gamma_{12}-\gamma_1(2\ell^{\star2}-2\ell^\star-1)>\Gamma^\star.
\]
Indeed, the last inequality is equivalent to (cf. \eqref{Gammametastable}) $2\gamma_{12}>(2\ell^\star-3)\gamma_1$, which can be rearranged as
\[
\frac{2\gamma_{12}+\gamma_1}{2\gamma_1}>\ell^\star-1.
\]
This is obvious by \eqref{e_consts}. Hence, by Lemma \ref{l_P32} we have $H(\omega_{m_0})\ge H(\bar\omega_{m_0}))> H(\mathbf2)+\Gamma^\star$.
\end{itemize}

Now, we return to the proof of Proposition \ref{p_231}. The idea is similar to the proof of Proposition \ref{p_213}. As $\omega_0=\mathbf2\notin\mathcal{C}_3$ and $\omega_N=\mathbf3\in\mathcal{C}_3$, we may define
\[
N':=\min\{1\le n\le N:\ \omega_n\in\mathcal{C}_3\},
\]
so that $\omega_{N'}\in\mathcal{C}_3$ and
\begin{align}\label{e_condi'}
\omega_0,\dots,\omega_{N'-1}\notin\mathcal{C}_1\cup\mathcal{C}_3.
\end{align}
Define $\tilde\omega_n:=\mathscr{P}^{12}\omega_n$ for each $0\le n\le N'$. Then, $\tilde\omega_0=\mathbf2$ and by Lemma \ref{l_inistab}-(b) $\tilde\omega_{N'}\in\mathcal{C}_3$, so that $\tilde\omega=(\tilde\omega_n)_{n=0}^{N'}$ is an $23$-path from $\mathbf2$ to $\mathcal{C}_3$. As the height of $(\omega_n)_{n=0}^{N'}$ is at most $H(\mathbf2)+\Gamma^\star$, Lemma \ref{l_P12}, which is applicable by the first claim, implies that the height of $\tilde\omega$ is also at most $H(\mathbf2)+\Gamma^\star$. Thus by Proposition \ref{p_gatezero}-(a), the height of $\tilde\omega$ must be exactly $H(\mathbf2)+\Gamma^\star$ and thus by Proposition \ref{p_gatezero}-(b), 
\[
\tilde\omega\cap \mathscr{A}\ne\varnothing.
\]
Hence, there exists $0\le m\le N'$ so that $\tilde\omega_m\in\mathscr{A}$. It holds that $H(\tilde\omega_m)=H(\mathbf2)+\Gamma^\star$. Moreover, by Lemma \ref{l_P12}, $H(\omega_m)\ge H(\tilde\omega_m)$. Thus,
\[
H(\mathbf2)+\Gamma^\star\ge H(\omega_m)\ge H(\tilde\omega_m) = H(\mathbf2)+\Gamma^\star,
\]
so that equalities must hold in all places. Then Lemma \ref{l_P12} again implies that
\[
\omega_m=\tilde\omega_m\in\mathscr{A},
\]
which concludes the proof.
\end{proof}

% SUBSEC: Minimal gates for the metastable transition
\subsection{Minimal gates for the metastable transition}\label{Sec6.4}

In this final subsection, we prove the results stated in Section \ref{Sec3.3}. Referring to the landscape given in Figure \ref{fig4} shall be helpful to understand the gist of the ideas given here.\medskip{}

First, we focus on Theorem \ref{t_gate1}. Since the situation is totally symmetric between spins $2$ and $3$, we prove Theorem \ref{t_gate1} for $\mathbf{m}=\mathbf2$.

\begin{proof}[Proof of Theorem \ref{t_gate1}-\emph{(a)}]
We prove that $\mathcal{W}(\mathbf2,\mathbf1)\cup\mathcal{W}(\mathbf3,\mathbf1)$ is a minimal gate for the transition $\mathbf2\to\mathbf1$. First, suppose that $\omega=(\omega_n)_{n=0}^N$ is an optimal path $\mathbf2\to\mathbf1$. Consider the last visit of $\omega$ to $\mathcal{C}_2\cup \mathcal{C}_3$, which is possible because $\omega$ starts at $\mathbf2\in\mathcal{C}_2\cup\mathcal{C}_3$. If the last visit is to $\mathcal{C}_2$, so that after then it does not visit $\mathcal{C}_3$, then by Proposition \ref{p_213} we deduce that $\omega\cap\mathcal{W}(\mathbf2,\mathbf1)\ne\varnothing$. If the last visit is to $\mathcal{C}_3$, then similarly by Proposition \ref{p_213}, we have $\omega\cap\mathcal{W}(\mathbf3,\mathbf1)\ne\varnothing$. This proves that $\mathcal{W}(\mathbf2,\mathbf1)\cup\mathcal{W}(\mathbf3,\mathbf1)$ is a gate.\medskip{}

To show that it is minimal, take any $\sigma\in\mathcal{W}(\mathbf2,\mathbf1)\cup\mathcal{W}(\mathbf3,\mathbf1)$. It suffices to show that $[\mathcal{W}(\mathbf2,\mathbf1)\cup\mathcal{W}(\mathbf3,\mathbf1)]\setminus\{\sigma\}$ is not a gate. If $\sigma\in\mathcal{W}(\mathbf2,\mathbf1)$, then the reference path $\mathbf2\to\mathbf1$ with respect to $\sigma$ defined in Definition \ref{d_refpath1} does not visit $\mathcal{W}(\mathbf2,\mathbf1)\cup\mathcal{W}(\mathbf3,\mathbf1)$ except at $\sigma$, and thus we are done. If $\sigma\in\mathcal{W}(\mathbf3,\mathbf1)$, we can concatenate any reference path from $\mathbf2$ to $\mathbf3$ (given in Definition \ref{d_refpath2}) and the reference path $\mathbf3\to\mathbf1$ with respect to $\sigma$ (given in Definition \ref{d_refpath1}) to obtain an optimal path from $\mathbf2$ to $\mathbf1$. This path does not visit $\mathcal{W}(\mathbf2,\mathbf1)\cup\mathcal{W}(\mathbf3,\mathbf1)$ except at $\sigma$. In these two cases we proved that $\mathcal{W}(\mathbf2,\mathbf1)\cup\mathcal{W}(\mathbf3,\mathbf1)$ is indeed a minimal gate.
\end{proof}

\begin{proof}[Proof of Theorem \ref{t_gate1}-\emph{(b)}]
As stated in the theorem, we fix a set $\mathscr{A}$ in
\[
\big\{\mathscr{W}_j^h(\mathbf2,\mathbf3)\big\}_{j,h}\cup\big\{\mathscr{Q}(\mathbf2,\mathbf3),\mathscr{P}(\mathbf2,\mathbf3),\mathscr{P}(\mathbf3,\mathbf2),\mathscr{Q}(\mathbf3,\mathbf2)\big\}\cup\big\{\mathscr{H}_i(\mathbf2,\mathbf3)\big\}_i\cup\big\{\mathscr{H}_i(\mathbf3,\mathbf2)\big\}_i.
\]
We prove that $\mathcal{W}(\mathbf2,\mathbf1)\cup\mathscr{A}$ is a minimal gate. First, we demonstrate that it is a gate. Take an arbitrary optimal path $\omega=(\omega_n)_{n=0}^N$ from $\mathbf2$ to $\mathbf1$. As we did in the proof of part (a), we divide into two cases, but in this case we consider the first visit to $\mathcal{C}_1\cup\mathcal{C}_3$ which is possible since $\mathbf1\in\mathcal{C}_1\cup\mathcal{C}_3$. If the first visit to $\mathcal{C}_1\cup\mathcal{C}_3$ is to $\mathcal{C}_1$, then by Proposition \ref{p_213} it holds that $\omega\cap\mathcal{W}(\mathbf2,\mathbf1)\ne\varnothing$. If the first visit to $\mathcal{C}_1\cup\mathcal{C}_3$ is to $\mathcal{C}_3$, then by Proposition \ref{p_231} it holds that $\omega\cap\mathscr{A}\ne\varnothing$.\medskip{}

Finally, we prove that $\mathcal{W}(\mathbf2,\mathbf1)\cup\mathscr{A}$ is minimal. Take any $\sigma\in\mathcal{W}(\mathbf2,\mathbf1)\cup\mathscr{A}$. If $\sigma\in\mathcal{W}(\mathbf2,\mathbf1)$, then the reference path $\mathbf2\to\mathbf1$ with respect to $\sigma$ defined in Definition \ref{d_refpath1} does not visit $\mathcal{W}(\mathbf2,\mathbf1)\cup\mathscr{A}$ except at $\sigma$, and thus we are done. If $\sigma\in\mathscr{A}$, we can concatenate the reference path from $\mathbf2$ to $\mathbf3$ with respect to $\sigma$ (given in Definition \ref{d_refpath2}) and any reference path from $\mathbf3$ to $\mathbf1$ (given in Definition \ref{d_refpath1}) to obtain an optimal path from $\mathbf2$ to $\mathbf1$. This path does not visit $\mathcal{W}(\mathbf2,\mathbf1)\cup\mathscr{A}$ except at $\sigma$. Therefore, we conclude that $\mathcal{W}(\mathbf2,\mathbf1)\cup\mathscr{A}$ is minimal.
\end{proof}

Next, we prove that there are no configurations, other than the ones characterized above, that form another minimal gate. This is exactly the content of item (c).

\begin{proof}[Proof of Theorem \ref{t_gate1}-\emph{(c)}]
By the equivalent characterization of unessential saddles given in \cite[Theorem 5.1]{manzo2004essential}, it suffices to prove that every $\sigma\notin\mathcal{W}(\mathbf2,\mathbf1)\cup\mathcal{W}(\mathbf3,\mathbf1)\cup\mathcal{W}(\mathbf2,\mathbf3)$ is unessential, i.e., for any $\omega\in\Omega^{opt}_{\mathbf2,\mathbf1}$ with $\sigma\in\omega$, there exists another $\omega'\in\Omega^{opt}_{\mathbf2,\mathbf1}$ such that $\mathrm{argmax}_{\omega'} H\subseteq \mathrm{argmax}_\omega H \setminus \{\sigma\}$.\medskip{}

The idea is nearly the same as the one presented in \cite[Proof of Theorem 3.2]{bet2021critical}, so we will briefly sketch the proof. We fix such $\sigma$ and $\omega\in\Omega_{\mathbf2,\mathbf1}^{opt}$. Then, as we demonstrated above, $\mathcal{W}(\mathbf2,\mathbf1)\cup\mathcal{W}(\mathbf3,\mathbf1)$ is a gate for $\mathbf2\to\mathbf1$, and thus
\[
\omega\cap[\mathcal{W}(\mathbf2,\mathbf1)\cup\mathcal{W}(\mathbf3,\mathbf1)]\ne\varnothing.
\]
If there exists $\zeta\in\omega\cap\mathcal{W}(\mathbf2,\mathbf1)$, then we may construct $\omega'$ as the reference path $\mathbf2\to\mathbf1$ with respect to $\zeta$, so that
\[
\mathrm{argmax}_{\omega'} H =\{\zeta\}\subseteq \mathrm{argmax}_{\omega} H \setminus \{\sigma\}.
\]
If $\omega\cap\mathcal{W}(\mathbf2,\mathbf1)=\varnothing$, then $\omega$ must visit $\mathcal{C}_3$ before $\mathcal{C}_1$; otherwise, there is a subpath of $\omega$ from $\mathbf2$ to $\mathcal{C}_1$ which does not visit $\mathcal{C}_3$, and then Proposition \ref{p_213} implies that $\omega\cap\mathcal{W}(\mathbf2,\mathbf1)\ne\varnothing$ which yields a contradiction. Thus by Proposition \ref{p_231}, $\omega\cap\mathscr{A}\ne\varnothing$ where $\mathscr{A}$ is any collection chosen as in Proposition \ref{p_231}. Therefore, we can apply the argument given in \cite[Proof of Theorem 3.2]{bet2021critical}, where we record the last visit to $\mathscr{H}_1(\mathbf2,\mathbf3)$ and then the first visit to $\mathscr{H}_1(\mathbf3,\mathbf2)$. Then, we can record all the gate configurations visited by $\omega$ during this period, and then glue them together to construct a reference path $\omega_1'$ from $\mathbf2$ to $\mathbf3$. Next, since $\omega\cap\mathcal{W}(\mathbf2,\mathbf1)=\varnothing$ it holds that $\omega\cap\mathcal{W}(\mathbf3,\mathbf1)\ne\varnothing$. Thus, there exists a reference path $\omega_2': \mathbf3\to\mathbf1$ which visits the configuration belonging to $\omega\cap\mathcal{W}(\mathbf3,\mathbf1)$. Concatenating $\omega_1'$ and $\omega_2'$, we obtain a new optimal path $\omega'$ so that
\[
\mathrm{argmax}_{\omega'} H \subseteq \mathrm{argmax}_\omega H.
\]
Moreover, $\mathrm{argmax}_{\omega'} H$ is clearly a subset of $\mathcal{W}(\mathbf2,\mathbf3)\cup\mathcal{W}(\mathbf3,\mathbf1)$, so that $\sigma\notin \mathrm{argmax}_{\omega'} H$. Therefore, we conclude that
\[
\mathrm{argmax}_{\omega'} H \subseteq \mathrm{argmax}_\omega H \setminus \{\sigma\}.
\]
This concludes the proof. For a more detailed explanation of the construction of $\omega'_1$, we refer to \cite[Proof of Theorem 3.2]{bet2021critical}.
\end{proof}

\begin{proof}[Proof of Theorem \ref{t_gate2}]
The proof follows the same steps as the previous one; the main ingredients are Propositions \ref{p_213} and \ref{p_231}. We omit the details of the proof to avoid unnecessary repetitions of the technical details.
\end{proof}

\appendix

% Appendix
\section{Appendix}
%

% SUBSEC: Alternative form of the Hamiltonian
\subsection{Alternative form of the Hamiltonian}

Recall the definition \eqref{e_hamiltonian} of the Hamiltonian function $H:\mathcal{X}\rightarrow\mathbb{R}$, which is
\begin{align}\label{e_ham}
H(\sigma)=-\sum_{i\in S}J_{ii}\sum_{\{v,w\}\in E} \mathbbm{1}_{\{\sigma(v)=\sigma(w)=i\}}+\sum_{i,j\in S:\, i< j} J_{ij} \sum_{\{v,w\}\in E} \mathbbm{1}_{\{\{\sigma(v),\sigma(w)\}=\{i,j\}\}}.
\end{align}
Recall the definitions  \eqref{e_nij}, \eqref{e_Ni}. Since the total number of edges is $2KL$, it is clear that
\[
\sum_{i\in S}\mathfrak{n}_{ii}(\sigma)+\sum_{i<j}\mathfrak{n}_{ij}(\sigma)=2KL.
\]
Then, we may rewrite \eqref{e_ham} as
\[
H(\sigma)=-\sum_{i\in S}J_{ii}\mathfrak{n}_{ii}(\sigma)+\sum_{i<j}J_{ij}\mathfrak{n}_{ij}(\sigma)=-2KLJ_{22}-\sum_{i\in S}(J_{ii}-J_{22})\mathfrak{n}_{ii}(\sigma)+\sum_{i<j}(J_{ij}+J_{22})\mathfrak{n}_{ij}(\sigma).
\]
Note that by \eqref{e_Jii}, $J_{22}=J_{33}$. By \eqref{e_gammadef} and since $H(\mathbf2)=-2KLJ_{22}$, we deduce that
\begin{align}\label{e_ham'}
H(\sigma)=H(\mathbf2)-\gamma_1\mathfrak{n}_{11}(\sigma)+\sum_{i< j} \gamma_{ij}\mathfrak{n}_{ij}(\sigma).
\end{align}

For a final remark, fix a configuration $\sigma\in\mathcal{X}$ and a spin $i\in S$. Consider the number of $ij$-edges in $\sigma$ for all $j\in S$. If we count according to the fixed spin $i$, since each spin has exactly four neighboring spins,
this is simply four times $\mathfrak{N}_i(\sigma)$. Alternatively, using the definition \eqref{e_nij}, this equals $2\mathfrak{n}_{ii}(\sigma)+\sum_{j\ne i}\mathfrak{n}_{ij}(\sigma)$ where the factor $2$ appears in front of $\mathfrak{n}_{ii}(\sigma)$ since each $ii$-edge must be counted twice. Therefore, we conclude that
\begin{align}\label{e_nijNi}
4\mathfrak{N}_i(\sigma)=2\mathfrak{n}_{ii}(\sigma)+\sum_{j:\, j\ne i}\mathfrak{n}_{ij}(\sigma)\;\;\;\;\text{for all }\sigma\in\mathcal{X}\text{ and }i\in S.
\end{align}

% SUBSEC: Auxiliary function
\subsection{Auxiliary function}

In this subsection, we provide an estimate on an auxiliary function which is used in Assumption \ref{assump}. For every real number $h>0$, we define a function $f_h:(0,\infty)\rightarrow\mathbb{R}$ by
\begin{align}\label{e_fh}
f_h(x):= 4\Big(x+\frac{h}2\Big)\Big\lceil\frac{x+\frac{h}2}{h}\Big\rceil -2h\Big(\Big\lceil\frac{x+\frac{h}2}{h}\Big\rceil^2-\Big\lceil\frac{x+\frac{h}2}{h}\Big\rceil+1\Big).
\end{align}
Here, $\lceil\alpha\rceil$ is the least integer not smaller than $\alpha$.

\begin{lemma}\label{l_fh}
The following statements hold for $h>0$.
\begin{enumerate}
\item[\emph{(a)}] The function $f_{h}$ is continuous, piece-wise linear and strictly
increasing on $(0,\infty)$.
\item[\emph{(b)}] We have $f_{h}(\frac{h}2)=2h$ and $\lim_{x\rightarrow\infty}f_{h}(x)=\infty$.
\item[\emph{(c)}] For all $x\in(0,\infty)$,
\[
0\le f_{h}(x)-\Big(\frac{2x^{2}}{h}+4x-\frac{h}2\Big)\le\frac{h}{2}.
\]
The left (resp. right) equality holds if and only if $(x+\frac{h}2)/h\in\mathbb{N}$ (resp. $x/h\in\mathbb{N}$).
\end{enumerate}
\end{lemma}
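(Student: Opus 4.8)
\textbf{Proof proposal for Lemma \ref{l_fh}.}

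The plan is to analyze $f_h$ by introducing the integer-valued step function $n(x) := \lceil (x + h/2)/h \rceil$, so that $f_h(x) = 4(x + h/2)n(x) - 2h(n(x)^2 - n(x) + 1)$. For a fixed value $n(x) = n$, the function is affine in $x$ with slope $4n > 0$, which immediately shows piece-wise linearity and that $f_h$ is strictly increasing on each piece. The breakpoints occur precisely where $(x+h/2)/h$ crosses an integer, i.e. at $x = (k - 1/2)h$ for $k \in \mathbb{N}$. To establish continuity (part (a)), I would check that the left and right expressions agree at each breakpoint: at $x = (n - 1/2)h$ one has $x + h/2 = nh$, and substituting $n(x) = n$ (right-continuity of the ceiling gives this value at the breakpoint) yields $f_h(x) = 4nh \cdot n - 2h(n^2 - n + 1) = 2h(n^2 + n - 1)$; substituting instead the value $n - 1$ (the value just to the left) gives $4nh(n-1) - 2h((n-1)^2 - (n-1) + 1) = 2h(2n^2 - 2n - n^2 + 3n - 3 + 2 \cdot \text{const}) $ — the routine computation should collapse to the same $2h(n^2+n-1)$, confirming continuity. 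Combined with the positive slopes on each piece, this gives that $f_h$ is continuous, piece-wise linear, and strictly increasing on all of $(0,\infty)$.

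For part (b), the value at $x = h/2$ is direct: then $(x + h/2)/h = 1$, so $n(h/2) = 1$ and $f_h(h/2) = 4h \cdot 1 - 2h(1 - 1 + 1) = 2h$. The limit $f_h(x) \to \infty$ as $x \to \infty$ follows since $n(x) \to \infty$ and the dominant behavior is $f_h(x) \sim 2x^2/h$ (made precise by part (c)), or more simply since $f_h$ is increasing and unbounded on the sequence of breakpoints where the values $2h(n^2+n-1)$ diverge.

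For part (c), the key algebraic identity is to write $x + h/2 = (n(x) - \theta)h$ where $\theta = \theta(x) := n(x) - (x+h/2)/h \in [0,1)$ measures the fractional gap to the ceiling. Substituting $n = n(x)$ and $x + h/2 = (n-\theta)h$ into the definition and expanding, $f_h(x) - (2x^2/h + 4x - h/2)$ should simplify — after replacing $2x^2/h + 4x - h/2 = 2(x+h/2)^2/h - h = 2(n-\theta)^2 h - h$ — to a clean expression of the form $2h\theta(1-\theta)$ or similar (I expect exactly $2h\,\theta(1-\theta)$ after collecting terms, possibly up to checking constants). Since $\theta \in [0,1)$, the quantity $\theta(1-\theta)$ ranges in $[0, 1/4]$, giving $0 \le f_h(x) - (2x^2/h + 4x - h/2) \le 2h/4 = h/2$. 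The left equality holds iff $\theta = 0$, i.e. iff $(x + h/2)/h \in \mathbb{N}$; the right equality holds iff $\theta = 1/2$, i.e. iff $(x+h/2)/h - 1/2 = x/h \in \mathbb{N}$. The main obstacle is purely bookkeeping: carrying out the expansion in terms of $n$ and $\theta$ without sign errors so that the quadratic-in-$\theta$ form emerges cleanly; once that identity is in hand, everything else is immediate. I would double-check the identity at the two extreme cases $\theta = 0$ (breakpoint, where the bound should be tight on the left) and $\theta = 1/2$ (where $x$ is itself a multiple of $h$) as a sanity check before presenting it.
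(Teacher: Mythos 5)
Your approach is essentially the paper's: partition $(0,\infty)$ into the intervals on which the ceiling is constant, observe that $f_h$ is affine with positive slope $4n$ on each piece, match values at the breakpoints for continuity, evaluate at $x=h/2$ for (b), and prove (c) via an exact quadratic identity for the difference. Your claimed identity is in fact correct: writing $x+\frac h2=(n-\theta)h$ with $\theta\in[0,1)$ one gets $f_h(x)-\bigl(\frac{2x^2}h+4x-\frac h2\bigr)=2h\theta(1-\theta)$, which is the paper's formula $-\frac2h(x-mh)^2+\frac h2$ in different variables, and it yields exactly the stated bounds and equality cases.

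Two bookkeeping slips should be fixed before this counts as a proof. First, the continuity check pairs the wrong pieces: at the breakpoint $x_0=(n-\frac12)h$ the two adjacent pieces carry ceiling values $n$ (on $((n-\frac32)h,(n-\frac12)h]$, so this is also the value \emph{at and just to the left of} $x_0$, since $\lceil\cdot\rceil$ is left-continuous at integers, not right-continuous) and $n+1$ (just to the right). Comparing $n$ with $n+1$ at $x_0$ gives $2h(n^2+n-1)$ in both cases, confirming continuity; comparing $n$ with $n-1$, as you propose, gives $2h(n^2+n-1)$ versus $2h(n^2+n-3)$, so the computation would \emph{not} collapse as you predict. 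Second, the intermediate rewriting $\frac{2x^2}h+4x-\frac h2=\frac{2(x+\frac h2)^2}h-h$ drops a $2x$ term (the correct identity is $\frac{2x^2}h+4x-\frac h2=\frac{2(x+\frac h2)^2}h+2x-h$); your final expression $2h\theta(1-\theta)$ is nonetheless correct, so only the intermediate step needs repair. With these corrections the argument is complete and coincides with the paper's proof.
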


\begin{proof}
For each integer $m\ge0$, if $x\in((m-\frac12)h,(m+\frac12)h]$ then $\lceil\frac{x+\frac{h}2}{h}\rceil=m+1$ and thus
\[
f_{h}(x)=4(m+1)\Big(x+\frac{h}2\Big)-2h(m^2+m+1)=4(m+1)x-2hm^2.
\]
In turn, we have for each $m\ge1$ that
\[
\lim_{x\rightarrow (m-\frac12)h+}f_{h}(x)=4(m+1)\Big(m-\frac12\Big)h-2hm^2=f_{h}\Big(\Big(m-\frac12\Big)h\Big).
\]
These formulas verify both (a) and (b) of the lemma. Finally, to prove (c) notice that if $x\in((m-\frac12)h,(m+\frac12)h]$, we have
\begin{align*}
f_h(x)-\Big(\frac{2x^2}h+4x-\frac{h}2\Big)=-\frac2h(x-mh)^2+\frac{h}2.
\end{align*}
This concludes the proof of (c).
\end{proof}

% SUBSEC: Heuristics behind Assumption \ref{assump}
\subsection{Heuristics behind Assumption \ref{assump}}\label{SecA.3}

Here, we provide a brief explanation which justifies each condition given in Assumption \ref{assump}. We introduce a notation for convenience. For spins $i,j\in S$,
\begin{align}\label{e_Xij}
\mathcal{X}^{ij}:=\big\{\eta\in\mathcal{X}:\ \eta(v)\in\{i,j\}\text{ for all }v\in V\big\}.
\end{align}
In other words, $\mathcal{X}^{ij}$ is the collection of configurations in which all spins are either $i$ or $j$.\medskip{}

\begin{itemize}
\item[\textbf{A}.] First, we focus on the (potentially metastable) transition from $\mathbf{2}$ to $\mathbf{1}$. If $\sigma(v)\in\{1,2\}$ for all $v\in V$, formula \eqref{e_ham'} can be rewritten as
\[
H(\sigma)=H(\mathbf2)-\gamma_1\mathfrak{n}_{11}(\sigma)+\gamma_{12}\mathfrak{n}_{12}(\sigma).
\]
By \eqref{e_nijNi} and since $\mathfrak{n}_{13}(\sigma)=0$, we have $2\mathfrak{n}_{11}(\sigma)+\mathfrak{n}_{12}(\sigma)=4\mathfrak{N}_1(\sigma)$. Thus,
\[
H(\sigma)=H(\mathbf2)+\Big(\frac12\gamma_1+\gamma_{12}\Big)\mathfrak{n}_{12}(\sigma)-2\gamma_1\mathfrak{N}_1(\sigma).
\]
The right-hand side is, modulo translation by a real number, equivalent to the Hamiltonian of the original Ising model (where spin $1$ corresponds to $+1$ and spin $2$ corresponds to $-1$), with interaction constant $J:=\frac12\gamma_1+\gamma_{12}$ and external field $h:=2\gamma_1$. To avoid technical difficulties it is standard to assume that $\frac{2J}{h}$ is not an integer (e.g., \cite[standard case]{neves1991critical}). In our context, this is equivalent to
\[
\frac{2\gamma_{12}+\gamma_1}{2\gamma_1}\text{ is not an integer,}
\]
which is exactly condition \textbf{A}.
\item[\textbf{B}.] In the original Ising metastable transition, the energy barrier is known to be $4J\lceil\frac{2J}{h}\rceil-h(\lceil\frac{2J}{h}\rceil^2-\lceil\frac{2J}{h}\rceil+1)$ (cf. \cite{neves1991critical}). In our context, this becomes
\begin{align}\label{e_Ebarrier21}
(4\gamma_{12}+2\gamma_1)\Big\lceil\frac{2\gamma_{12}+\gamma_1}{2\gamma_1}\Big\rceil-2\gamma_1\Big( \Big\lceil\frac{2\gamma_{12}+\gamma_1}{2\gamma_1}\Big\rceil^2-\Big\lceil\frac{2\gamma_{12}+\gamma_1}{2\gamma_1}\Big\rceil+1\Big)=f_{\gamma_1}(\gamma_{12}).
\end{align}
The last equality follows from the definition \eqref{e_fh}.\medskip{}

Next, we consider the (potentially metastable) transition from $\mathbf{2}$ to $\mathbf{3}$. If $\sigma(v)\in\{2,3\}$ for all $v\in V$, the representation \eqref{e_ham'} becomes
\[
H(\sigma)=H(\mathbf2)+\gamma_{23}\mathfrak{n}_{23}(\sigma).
\]
The right-hand side is, modulo translation by a real number, equivalent to the Hamiltonian of the original Ising model with interaction constant $J:=\gamma_{23}$ and zero external field. The energy barrier in this setting is known to be $2J(K+1)$ (cf. \cite{nardi2019tunneling}). In our context, this equals
\begin{align}\label{e_Ebarrier23}
2(K+1)\gamma_{23}.
\end{align}
Gathering \eqref{e_Ebarrier21} and \eqref{e_Ebarrier23}, to have the same energy barrier between $\mathbf2\to\mathbf1$ and $\mathbf2\to\mathbf3$, we obtain the condition
\begin{align*}
f_{\gamma_1}(\gamma_{12})=2(K+1)\gamma_{23},
\end{align*}
which is condition \textbf{B}.
\item[\textbf{C}.] First, the size of the protocritical droplet $\lceil\frac{2\gamma_{12}+\gamma_1}{2\gamma_1}\rceil$ (cf. \eqref{e_Ebarrier21}) is assumed to be large enough:
\begin{align}\label{e_C1}
\frac{\gamma_{12}}{\gamma_1}\text{ is sufficiently large.}
\end{align}
Next, we start from condition \textbf{B}. By Lemma \ref{l_fh}-(c), we obtain that
\begin{align*}
2(K+1)\gamma_{23}=f_{\gamma_1}(\gamma_{12})\le \frac{2\gamma_{12}^2}{\gamma_1}+4\gamma_{12}<\frac{2(\gamma_{12}+\gamma_1)^2}{\gamma_1}.
\end{align*}
The above display implies
\begin{align*}
\frac{(K+1)\gamma_1}{\gamma_{12}+\gamma_1} < \frac{\gamma_{12}+\gamma_1}{\gamma_{23}}.
\end{align*}
The lattice size is large enough compared to the fixed coefficients, and thus $\frac{(K+1)\gamma_1}{\gamma_{12}+\gamma_1}$ can be assumed sufficiently large. This also implies that
\begin{align}\label{e_C2}
\frac{\gamma_{12}+\gamma_1}{\gamma_{23}}\text{ is sufficiently large.}
\end{align}
Conditions \eqref{e_C1} and \eqref{e_C2} imply the desired condition \textbf{C}.
\end{itemize}

% SUBSEC : Reference paths
\subsection{Reference paths}\label{SecA.4}

Reference paths in our new model are defined in the same manner as in the original Ising model. Since the reference paths for Ising/Potts models with both non-zero \cite[Definition 5.1]{bet2021metastabilityneg} or zero \cite[Proposition 2.4]{nardi2019tunneling} external fields are very well known, we will give our definitions in a concise manner.

\begin{definition}[Reference path between $\mathbf{m}\in\{\mathbf2,\mathbf3\}$ and $\mathbf1$]\label{d_refpath1} Recall from \eqref{e_Wm1} that
\[
\mathcal{W}(\mathbf{m},\mathbf1)=B_{\ell^\star-1,\ell^\star}^1(m,1)\cup \hat{B}_{\ell^\star,\ell^\star-1}^1(m,1).
\]
For any $\eta\in\mathcal{W}(\mathbf m,\mathbf1)$, we construct a reference path $\omega:\mathbf m\to\mathbf1$ satisfying $\mathrm{argmax}_\omega H=\{\eta\}$ as follows. Denote by $R_\eta$ the rectangle of side lengths $\ell^\star$ and $\ell^\star-1$ contained in the $1$-cluster of $\eta$. Starting from $\omega_0=\mathbf m$, we consecutively update spins $m$ in $R_\eta$ to spins $1$ to form $\omega_{\ell^\star(\ell^\star-1)}=\eta_0$, where
\[
\eta_0(v)=\begin{cases} 1,& \text{if }v\in R_\eta,\\
m,& \text{if }v\notin R_\eta.
\end{cases}
\]
Next, from $\eta_0$ we create the corresponding protuberance to obtain $\omega_{\ell^\star(\ell^\star-1)+1}=\eta$. Then, we resume to enlarge the $1$-cluster in the usual consecutive manner to obtain $\omega_{KL}=1$. By the isomorphism argument given in Section \ref{SecA.3}-\textbf{A}, it is standard to observe that the height of $\omega$ is obtained uniquely at $\omega_{\ell^\star(\ell^\star-1)+1}=\eta$, and that the corresponding height is
\[
H(\eta)=H(\mathbf2)+4\ell^\star\Big(\gamma_{12}+\frac{\gamma_1}2\Big)-2\gamma_1(\ell^{\star2}-\ell^\star+1)=H(\mathbf2)+\Gamma^\star.
\]
\end{definition}

\begin{definition}[Reference path between $\mathbf2$ and $\mathbf3$]\label{d_refpath2}
First, we choose an arbitrary column $c$. Starting from $\mathbf{2}$, we update spins $2$ in $c$ to $3$ in a consecutive manner. Then, we choose one of its neighboring column and repeat the process. Iterating this procedure, we obtain $\mathbf{3}$. Similarly, by the isomorphism argument given in Section \ref{SecA.3}-\textbf{B}, we observe that the height of this path is obtained multiple times and that the height is
\[
H(\mathbf2)+(2K+2)\gamma_{23}=H(\mathbf2)+\Gamma^\star.
\]
For the reference path, we are able to select any order of columns, as long as the consecutive ones are neighboring, and also we may select any order of updates in each column, as long as the updates are consecutive. Thus, one can see that there are a huge number of possible reference paths from $\mathbf2$ to $\mathbf3$.

Consider any selection $\mathscr{A}$ from the collection
\[
\big\{\mathscr{W}_j^h(\mathbf2,\mathbf3)\big\}_{j,h}\cup\big\{\mathscr{Q}(\mathbf2,\mathbf3),\mathscr{P}(\mathbf2,\mathbf3),\mathscr{P}(\mathbf3,\mathbf2),\mathscr{Q}(\mathbf3,\mathbf2)\big\}\cup\big\{\mathscr{H}_i(\mathbf2,\mathbf3)\big\}_i\cup\big\{\mathscr{H}_i(\mathbf3,\mathbf2)\big\}_i,
\]
as in Theorem \ref{t_gate1}. By the diagram illustrated in Figure \ref{fig3} and by the freedom to choose an arbitrary order of spin updates, it is clear that for any $\sigma\in\mathscr{A}$ we can construct a reference path $\mathbf2\to\mathbf3$ so that it visits $\mathscr{A}$ only at $\sigma$.

Finally, we remark that in the case $K=L$, we may also choose an arbitrary row and proceed as described above, which also gives the height $(2L+2)\gamma_{23}=(2K+2)\gamma_{23}=\Gamma^\star$. Thus in this case, there are exactly two times more reference paths compared to the case $K<L$. This fact is not taken into account in the qualitative analysis of metastability via pathwise approach done in this paper; however, this will be crucial in the \textit{quantitative} analysis, when one intends to investigate the exact prefactor of the mean metastable transition time \cite{bet2021metastabilityneg,boviermanzo2002metastability,kim2021metastability}. This serves as a fruitful future research topic.
\end{definition}

%\bibliographystyle{abbrv}
%\bibliography{mybib}
\end{document}